\title{Some remarks on higher Morita categories}
\author{Rune Haugseng}
\date{\today}
\begin{document}

\maketitle
\begin{abstract}
  We take another look at the construction of double
  \icats{} of algebras and bimodules and prove a few
  supplemental results about these, including a simpler proof of the
  Segal condition and a comparison between our construction
  and that of Lurie. We then take a more streamlined, inductive
  approach to the higher Morita categories of $E_{n}$-algebras
  and show that these deloop correctly.
\end{abstract}

\tableofcontents

\section{Introduction}
If $A,B,C$ are associative algebras in a reasonable monoidal
$\infty$-category $\uV$ and $M$ and $N$ are, respectively, an $A$-$B$-
and a $B$-$C$-bimodule, then we can define the \emph{relative tensor
  product} $M \otimes_{B} N$ as the canonical $A$-$C$-bimodule
structure on the colimit of a certain\footnote{For historical reasons, this is often known as the ``bar
  construction''.} simplicial object of the
form
\[ [n] \mapsto M \otimes B^{\otimes n} \otimes N,\] with inner face
maps given by multiplication in $B$, outer face maps by the action of
$B$ on $M$ and $N$, and degeneracies by inserting the unit in $B$. The
full functoriality of the relative tensor product construction can be
encoded as a double \icat{}, which we call the \emph{Morita double
  \icat{}} of $\uV$ due to its connection to Morita theory. This has
\begin{itemize}
\item associative algebras as objects,
\item homomorphisms of associative algebras as vertical morphisms,
\item bimodules as horizontal morphisms, with relative tensor products as their composition,
\item homomorphisms of bimodules as ``squares''.
\end{itemize}
Such a
double \icat{} was first constructed by Lurie in \cite{HA}*{\S 4.4},
and an alternative construction was given by the author in
\cite{nmorita}. There are also ``higher-dimensional'' versions of this
construction: if $\uV$ is a reasonable $E_{n}$-monoidal \icat{}, then
there is an $(n+1)$-fold \icat{} of $E_{n}$-algebras and iterated
bimodules in $\uV$. This was also discussed in \cite{nmorita}, and a
more geometric version using factorization algebras on certain
stratifications of $\mathbb{R}^{n}$ has been constructed by
Scheimbauer~\cite{ScheimbauerThesis}. These ``higher Morita
categories'' are interesting as targets for extended TQFTs, and higher
dualizability therein has been studied in
\cite{DouglasSchommerPriesSnyderDualTensorCat,GwilliamScheimbauer,BrochierJordanSnyder}
(where the latter use a variant of the construction due to
Johnson-Freyd and Scheimbauer~\cite{JohnsonFreydScheimbauerLax} to
obtain 4-categories of braided monoidal categories).

The aim of this note is to revisit the construction of higher
categories of $E_{n}$-algebras and iterated bimodules from
\cite{nmorita}, and prove a few results to supplement that paper.
The new results of the present paper are as follows:
\begin{itemize}
\item We give a new proof of the Segal condition for bimodules in
  \cref{sec:segcond}, much simpler than that in \cite{nmorita} (and
  perhaps also a bit easier than the one in \cite{HA}).
\item We show that the double \icat{} of algebras and bimodules
  constructed in \cite{nmorita} is equivalent to that of Lurie
  \cite{HA}*{\S 4.4} in \cref{sec:compare}.
\item In \cref{sec:iterate} we make fully explicit the inductive
  nature of the construction of $(n+1)$-fold \icats{} of $E_{n}$-algebras
  from \cite{nmorita}. Combined with a delooping result we prove in
  \cref{sec:deloop}, this allows us to show that the two constructions
  of an $E_{k}$-monoidal structure on the $n$-fold \icat{} of
  $E_{n}$-algebras in an $E_{n+k}$-monoidal \icat{} considered in
  \cite{nmorita} (by naturality and by delooping the $(n+k)$-fold
  \icat{} of $E_{n+k}$-algebras) are equivalent.
\end{itemize}

\section{Algebras, bimodules, and relative tensor products}
\label{sec:algebras}

In this section we briefly review the basic setup for the construction
of double \icats{} of algebras and bimodules; we refer the reader to
\cite{nmorita}*{\S 2} for motivation for these definitions. Here we
will freely use the algebraic framework of (generalized)
non-symmetric \iopds{}; see \cite{enr}*{\S 2} for motivation and
\cite{enr}*{\S 3} for a detailed discussion of these objects\footnote{But note that here we will denote the \icat{} of algebras for a \gnsiopd{} $\uO$ in a monoidal \icat{} $\uV$ as just $\Alg_{\uO}(\uV)$.}.

\begin{notation}
  For $[n] \in \simp$, we write
  \[ \Dopn := (\simp_{/[n]})^{\op} \cong (\Dop)_{[n]/}\]
  for the opposite of its overcategory in $\simp$. This is a \gnsiopd{} --- indeed a double \icat{} --- for any $[n]$ \cite{nmorita}*{Lemma 4.10} (or see \cref{pbarsimpdouble} below). It is sometimes convenient to describe an object of $\Dopn$ as a list $(i_{0},\ldots,i_{k})$ where $0 \leq i_{0} \leq \cdots \leq i_{k} \leq n$, with this corresponding to the morphism $[k] \to [n]$ that sends $j \in [k]$ to $i_{j}$.  
\end{notation}

Recall that in a monoidal \icat{} $\uV^{\otimes} \to \Dop$, we have that:
\begin{itemize}
\item Algebras for $\Dop = \Dop_{/[0]}$ are associative algebras; we write
  \[ \Alg(\uV) := \Alg_{\Dop}(\uV).\]
\item Algebras for $\Dop_{/[1]}$ describe two associative algebras (via the two copies of $\Dop$ in $\Dop_{/[1]}$) and a bimodule between them; we write
  \[ \Bimod(\uV) := \Alg_{\Dop_{/[1]}}(\uV). \]
\end{itemize}
In general, an algebra $A$ for $\Dopn$ describes:
\begin{itemize}
\item $n+1$ associative algebras $A(i,i)$ (via restriction along the functor $\Dop \to \Dopn$ given by composition with $[0] \cong \{i\} \hookrightarrow [n]$);
\item for all $i < j$, a bimodule $A(i,j)$ for the algebras $A(i,i)$ and $A(j,j)$;
\item for all $i < j < k$, an $A(j,j)$-bilinear map of $A(i,i)$-$A(k,k)$-bimodules $A(i,j) \otimes A(j,k) \to A(i,k)$;
\item compatibilities between these bilinear maps under tensoring and composition, for $n > 2$.
\end{itemize}
We want to single out those $\Dopn$-algebras where these bilinear maps exhibit $A(i,k)$ as a relative tensor product $A(i,j) \otimes_{A(j,j)} A(j,k)$:
\begin{defn}
  Let $\uV$ be a monoidal \icat{} compatible with simplicial
  colimits. We say a $\Dopn$-algebra $M \colon \Dopn \to \uV$ is
  \emph{composite} if for every $0 \leq i < j \leq n$, the canonical
  map
  \[ M(i,i+1) \otimes_{M(i+1,i+1)} \cdots \otimes_{M(j-1,j-1)} M(j-1,j) \to
    M(i,j)\]
  is an equivalence. We write $\Algc_{\Dopn}(\uV)$ for the full subcategory of $\Alg_{\Dopn}(\uV)$ spanned by the composite $\Dopn$-algebras.
\end{defn}
We can characterize the composite algebras as precisely those that are extended from a certain subobject of $\Dopn$:
\begin{defn}\label{defn:cellular}
  We say a morphism $\phi \colon [m] \to [n]$ in $\simp$ is
  \emph{cellular} if $\phi(i+1) \leq \phi(i)+1$ for all
  $i = 0,\ldots,m-1$. We then define $\bbL_{/[n]}$ to be the full
  subcategory of $\Dop_{/[n]}$ spanned by the cellular morphisms, and
  write $\Lopn := (\bbL_{/[n]})^{\op}$; the forgetful functor to
  $\Dop$ makes $\Lopn$ a \gnsiopd{} \cite{nmorita}*{Lemma 4.14} (or
  see \cref{lem:wLopgnsiopd}). Let $i_{n} \colon \Lopn \to \Dopn$
  denote the inclusion of opposite categories; this is a morphism of \gnsiopds{}.
\end{defn}

\begin{observation}
  A morphism $\phi \colon [m] \to [n]$ is cellular \IFF{} it is a composite of a surjective map followed by an inert map.
\end{observation}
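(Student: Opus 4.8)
The plan is to produce the factorization explicitly via the standard (epi, mono) factorization in $\simp$, and to observe that cellularity is exactly the condition making the monomorphism part inert. Throughout I use that a morphism of $\simp$ is order-preserving, so for $\phi \colon [m] \to [n]$ the cellularity condition $\phi(i+1) \leq \phi(i)+1$ is equivalent to $\phi(i+1) - \phi(i) \in \{0,1\}$, i.e.\ the values of $\phi$ increase in steps of at most one. The notion of inert map relevant here is the interval inclusion: a map $\iota \colon [k] \to [n]$ of the form $j \mapsto a+j$ for some $a$ with $a+k \leq n$; among monomorphisms these are precisely the ones with convex (consecutive) image.

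For the ``only if'' direction, I would assume $\phi$ is cellular and set $a := \phi(0)$ and $b := \phi(m)$. The step condition shows that as $i$ runs from $0$ to $m$ the value $\phi(i)$ passes through every integer between $a$ and $b$ without skipping, so the image of $\phi$ is exactly the interval $\{a, a+1, \ldots, b\}$. I then define $\iota \colon [b-a] \to [n]$ by $j \mapsto a+j$ and $\sigma \colon [m] \to [b-a]$ by $i \mapsto \phi(i) - a$. By construction $\iota$ is inert, $\sigma$ is surjective since $\phi$ hits the whole interval, and $\iota \circ \sigma = \phi$.

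For the ``if'' direction, I would assume $\phi = \iota \circ \sigma$ with $\sigma$ surjective and $\iota$ inert. A surjective order-preserving map automatically satisfies the step condition, since a gap of size at least $2$ would omit a value lying in its range, contradicting surjectivity; hence $\sigma$ is cellular, while $\iota$ satisfies $\iota(j+1) = \iota(j)+1$ identically. Composing gives $\phi(i+1) - \phi(i) = \sigma(i+1) - \sigma(i) \leq 1$, so $\phi$ is cellular.

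I do not expect a serious obstacle here. The only point needing a little care is identifying the correct notion of inert in $\simp$ (interval inclusion / convex image) and verifying that the epi--mono factorization of a cellular map really yields an interval inclusion rather than an arbitrary injection --- which is precisely the ``image is an interval'' computation in the second paragraph.
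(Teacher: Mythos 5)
Your proof is correct and is exactly the argument the paper leaves implicit by phrasing this as an unproved observation: the surjective--injective factorization of a cellular map has injective part an interval inclusion (hence inert) because the step-size-at-most-one condition forces the image to be an interval, and conversely both surjections and interval inclusions satisfy the step condition, which is stable under composition. Nothing is missing.
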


\begin{propn}[\cite{nmorita}*{Corollary 4.20}]\label{propn:compalgs}
  Suppose $\uV$ is a monoidal \icat{} compatible with simplicial
  colimits. Then a $\Dopn$-algebra in $\uV$ is composite \IFF{} it is the operadic left Kan extension of its restriction to $\Lopn$. In other words, the restriction
  $i_{n}^{*} \colon \Alg_{\Dopn}(\uV) \to \Alg_{\Lopn}(\uV)$ has a
  fully faithful left adjoint $i_{n,!}$, given by operadic left Kan extension
  along $i_{n}$, with image $\Algc_{\Dopn}(\uV)$. \qed
\end{propn}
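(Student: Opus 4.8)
The plan is to produce $i_{n,!}$ as a genuine \emph{operadic} left Kan extension --- so that it lands in algebras rather than merely in functors --- and then to read off both its full faithfulness and its essential image from the resulting pointwise formula. First I would invoke the theory of operadic left Kan extensions (\cite{HA}*{\S 3.1.3} in the symmetric case, adapted to \gnsiopds{} in \cite{enr}): since $i_{n} \colon \Lopn \to \Dopn$ is a morphism of \gnsiopds{}, the restriction $i_{n}^{*}$ admits a left adjoint $i_{n,!}$, computed objectwise by operadic colimits, as soon as $\uV$ admits the relevant colimits and they are compatible with $\otimes$. The point of the hypothesis that $\uV$ is compatible with simplicial colimits is exactly that these operadic colimits will turn out to be built from bar constructions, and so exist.

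Full faithfulness of $i_{n,!}$ is then formal, since operadic left Kan extension along a fully faithful morphism of \gnsiopds{} restricts back to the identity: the unit $\mathrm{id} \to i_{n}^{*} i_{n,!}$ is an equivalence because the operadic colimit computing $i_{n,!}$ at an object already lying in $\Lopn$ degenerates to the value of the original algebra there. This simultaneously shows that $i_{n,!}$ is fully faithful and that $i_{n}^{*} i_{n,!} \simeq \mathrm{id}$.

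To pin down the essential image I would test, for a $\Dopn$-algebra $N$, whether the counit $\varepsilon \colon i_{n,!} i_{n}^{*} N \to N$ is an equivalence, and check this objectwise. Since $\varepsilon$ is a map of algebras it is compatible with the Segal equivalences $N(i_{0},\dots,i_{k}) \simeq N(i_{0},i_{1}) \otimes \cdots \otimes N(i_{k-1},i_{k})$, so $\varepsilon$ is an equivalence everywhere \IFF{} it is so on the edges $(i,j)$; moreover on cellular objects $\varepsilon$ is invertible by the previous paragraph, which leaves only the edges $(i,j)$ with $j \geq i+2$. The heart of the argument is the computation of the operadic colimit defining $(i_{n,!} i_{n}^{*} N)(i,j)$. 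Unwinding definitions, its indexing category is that of active maps into $(i,j)$ out of $\Lopn$, i.e.\ of cellular $\tau \colon [p] \to [n]$ with $\tau(0) = i$ and $\tau(p) = j$; by the observation that cellular maps are surjective-then-inert, such $\tau$ are exactly the nondecreasing ``paths'' from $i$ to $j$ with steps of size $0$ or $1$, and the corresponding values $N(\tau(0),\tau(1)) \otimes \cdots \otimes N(\tau(p-1),\tau(p))$ (the size-$0$ steps contributing the algebra factors $N(\ell,\ell)$) assemble into precisely the two-sided bar construction. I would therefore identify, after a cofinality argument, the operadic colimit with the iterated relative tensor product $N(i,i+1) \otimes_{N(i+1,i+1)} \cdots \otimes_{N(j-1,j-1)} N(j-1,j)$ and $\varepsilon_{(i,j)}$ with its canonical comparison map to $N(i,j)$. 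Then $\varepsilon$ is an equivalence \IFF{} $N$ is composite, as required.

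The main obstacle is this last identification: one must match an \emph{operadic} colimit --- defined through the operadic/monoidal structure --- with the relative tensor product, which is a \emph{simplicial} colimit taken in $\uV$. Concretely this means analysing the comma categories of active cellular morphisms into $(i,j)$, verifying that the surjective-then-inert decomposition collapses the indexing category (cofinally) onto the bar construction, and checking the compatibility with $\otimes$ that lets the operadic colimit be evaluated as the ordinary simplicial colimit computing the relative tensor product. Once this cofinality and compatibility are in hand the remainder is bookkeeping, so I would concentrate the effort there.
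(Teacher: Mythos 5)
Your outline is correct and takes essentially the same route as the source this statement is quoted from: the paper itself gives no proof beyond the citation of \cite{nmorita}*{Corollary 4.20}, and the argument there proceeds exactly as you propose --- existence of the operadic left Kan extension $i_{n,!}$, full faithfulness from invertibility of the unit along the fully faithful $i_{n}$, and identification of the operadic colimit at a non-cellular edge $(i,j)$ with the iterated two-sided bar construction by a cofinality analysis of the active cellular maps to $(i,j)$. You have also correctly located the genuine technical content in that last cofinality step (collapsing the comma category of cellular paths from $i$ to $j$ onto the multisimplicial bar indexing category), which is precisely where the effort is spent in \cite{nmorita}.
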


\section{The Segal condition}\label{sec:segcond}
In this section we will give a simple proof of the Segal condition
for composite bimodules. By this we mean the equivalences
\begin{equation}
  \label{eq:Segcond}
  \Alg_{\Lopn}(\uc{V}) \isoto \Bimod(\uc{V}) \times_{\Alg(\uc{V})} \cdots \times_{\Alg(\uc{V})} \Bimod(\uc{V})
\end{equation}
for every $n$ and any monoidal \icat{} $\uV$. This was first proved by Lurie as \cite{HA}*{Proposition 4.4.1.11} and then given a rather more complicated proof in \cite{nmorita}*{\S 4.3}.

We will prove \cref{eq:Segcond} first in the case where $\uV$ is compatible with small colimits. In this case we have access to free algebras for the \gnsiopds{} $\Lopn$, in the following sense:
\begin{propn}\label{propn:freealg}
  Let $\uO$ be a \gnsiopd{} such that $\uO_{[0]}$ is an \igpd{}. If $\uV$ is a monoidal \icat{} compatible with colimits indexed by \igpds{}, then the restriction
  \[ U_{\uO} \colon \Alg_{\uO}(\uV) \to \Fun(\uO_{[1]}, \uV)\]
  has a left adjoint $F_{\uO}$ such that for $\Phi \colon \uO_{[1]} \to \uV$ and $X \in \uO_{[1]}$ we have
  \[ T_{\uO}\Phi(X) \simeq \colimP_{\alpha \colon Y \to X}
      \Phi(Y_{01}) \otimes \cdots \otimes \Phi(Y_{(n-1)n})
    \]
   where $T_{\uO} := U_{\uO}F_{\uO}$ is the associated monad and the colimit is over the \igpd{} $\Act_{\uO}(X)$ of active morphisms $Y \to X$ in $\uO$, and for $Y \in \uO$ over $[n]$ in $\Dop$ we denote by $Y \to Y_{(i-1)i}$ a cocartesian morphism over the inert map $[1]\cong \{i-1,i\} \hookrightarrow [n]$ in $\simp$. Moreover, this adjunction is monadic.
\end{propn}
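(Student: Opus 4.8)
The plan is to realise the free--forgetful adjunction as a single instance of operadic left Kan extension and then to extract both the colimit formula and monadicity from it. First I would exhibit $U_{\uO}$ as a restriction functor: let $j \colon \uO^{\mathrm{int}} \hookrightarrow \uO$ be the wide sub-\gnsiopd{} $\uO \times_{\Dop} \Dop^{\mathrm{int}}$ spanned by the inert morphisms. Because $\uV^{\otimes}_{[0]} \simeq *$ and $\uV^{\otimes}$ satisfies the Segal condition, a map $\uO^{\mathrm{int}} \to \uV^{\otimes}$ of \gnsiopds{} is determined by its restriction to the fibre $\uO_{[1]}$, so that $\Alg_{\uO^{\mathrm{int}}}(\uV) \simeq \Fun(\uO_{[1]}, \uV)$ and, under this identification, $j^{*} \simeq U_{\uO}$. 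I would then set $F_{\uO} := j_{!}$ to be the operadic left Kan extension along $j$, left adjoint to $j^{*} = U_{\uO}$; this exists by the operadic left Kan extension machinery already invoked in \cref{propn:compalgs}, provided the colimits computing it exist in $\uV$, which the identification of the indexing below will confirm.

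The content of the formula is then the pointwise description of $j_{!}$. Evaluating at $X \in \uO_{[1]}$ presents $T_{\uO}\Phi(X) = U_{\uO}F_{\uO}\Phi(X)$ as an operadic colimit indexed by active morphisms into $X$, and unwinding the definition of the operadic tensor along an object $Y$ over $[n]$ --- decomposed via its inert projections $Y \to Y_{(i-1)i}$ --- turns the integrand into $\Phi(Y_{01}) \otimes \cdots \otimes \Phi(Y_{(n-1)n})$, as displayed. The step I expect to be the main obstacle is to show that this operadic colimit is computed by an \emph{ordinary} colimit over the \igpd{} $\Act_{\uO}(X)$ of active morphisms $Y \to X$; a priori such morphisms organise into an $\infty$-category rather than a space, and it is precisely the hypothesis that $\uO_{[0]}$ is an \igpd{} that allows this indexing $\infty$-category to be replaced by its underlying space of active arrows. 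This is also what keeps the relevant colimit indexed by an \igpd{}, matching the only cocompleteness we have assumed on $\uV$.

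Finally I would obtain monadicity from the Barr--Beck--Lurie theorem. The left adjoint is in hand, and conservativity of $U_{\uO}$ follows from the Segal conditions: a map $f \colon A \to B$ of $\uO$-algebras that is an equivalence on $\uO_{[1]}$ is, for each $Y$ over $[n]$, identified under $\uV^{\otimes}_{[n]} \simeq \uV^{\times n}$ with the tuple $(f_{Y_{01}}, \ldots, f_{Y_{(n-1)n}})$ of equivalences (with $\uV^{\otimes}_{[0]} \simeq *$ disposing of the objects over $[0]$), hence is itself an equivalence. For the colimit hypothesis I would observe that the relevant colimits are \emph{absolute}: if $A_{\bullet}$ is a $U_{\uO}$-split simplicial $\uO$-algebra, then by the Segal description each $A_{\bullet}(Y)$ is split in $\uV^{\times n}$, so its geometric realisation exists objectwise as a split (hence absolute) colimit; these assemble into an $\uO$-algebra $A_{-1}$ with $U_{\uO}A_{-1} \simeq \colimP U_{\uO}A_{\bullet}$. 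Thus $U_{\uO}$ creates $U_{\uO}$-split geometric realisations --- and, notably, no further cocompleteness of $\uV$ is needed for this --- so Barr--Beck--Lurie yields the monadicity of the adjunction.
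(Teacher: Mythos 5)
The paper's own ``proof'' of \cref{propn:freealg} is only a pointer to \cite{enr}*{\S A.4, Corollary A.5.6} and \cite{patterns2}*{Corollary 8.14, Example 9.6}, so your sketch is a reconstruction of the argument in those references rather than a parallel to anything in the text. Your architecture is the standard one and matches theirs: exhibit $U_{\uO}$ as restriction along the inclusion of the inert part (the identification $\Alg_{\uO \times_{\Dop}\Dop^{\mathrm{int}}}(\uV) \simeq \Fun(\uO_{[1]},\uV)$ using $\uV^{\otimes}_{[0]}\simeq *$ is fine), obtain $F_{\uO}$ as an operadic left Kan extension, and deduce monadicity from Barr--Beck--Lurie via conservativity and the fact that split simplicial objects have absolute colimits, so $U_{\uO}$-split geometric realizations are created objectwise. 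Those parts are correct.

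The gap is in the step you yourself flag as the main obstacle, and your proposed resolution of it does not work. The indexing \icat{} for the operadic left Kan extension along $j \colon \uO\times_{\Dop}\Dop^{\mathrm{int}} \hookrightarrow \uO$ at $X \in \uO_{[1]}$ is $(\uO\times_{\Dop}\Dop^{\mathrm{int}})\times_{\uO}\uO^{\act}_{/X}$, and uniqueness of inert--active factorizations shows that every morphism in it lies over an identity of $\Dop$, \ie{} is a \emph{fibrewise} morphism $Y \to Y'$ in some $\uO_{[k]}$ compatible with the active maps to $X$. Via the Segal equivalence $\uO_{[k]}\simeq \uO_{[1]}\times_{\uO_{[0]}}\cdots\times_{\uO_{[0]}}\uO_{[1]}$ these morphisms are governed by $\uO_{[1]}$ at least as much as by $\uO_{[0]}$, so assuming $\uO_{[0]}$ is an \igpd{} does nothing to invert the components coming from $\uO_{[1]}$, and the inclusion of the maximal sub-\igpd{} of this slice is not cofinal in general. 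Concretely, for $\uO = \uC^{\otimes}$ with $\uC$ a non-groupoid monoidal category one has $\uO_{[0]}\simeq *$, yet the active slice over $X$ is a genuine \icat{} whose colimit computes Day convolution powers of $\Phi$ (already for $k=1$ the category colimit gives $\Phi(X)$ while the space colimit gives $\colimP_{\iota(\uC_{/X})}\Phi(Y)$). So the passage from the \icat{}-indexed operadic colimit to a colimit over the \igpd{} $\Act_{\uO}(X)$ needs either an additional hypothesis (in effect that the active slices, hence essentially $\uO_{[1]}$, are \igpds{} --- which is what actually holds in the paper's application, where $(\Lopn)_{[1]} = T_{n}$ is a set) or a different justification; it is not a consequence of $\uO_{[0]}$ being an \igpd{}. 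Relatedly, your sketch does not address the extendability input --- the check that the pointwise colimits assemble into an $\uO$-algebra --- which is where the citation of \cite{patterns2}*{Example 9.6} does its work.
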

\begin{proof}
  For the existence of the left adjoint and the formula see the
  discussion in \cite{enr}*{\S A.4}; the monadicity follows from
  \cite{enr}*{Corollary A.5.6}. Alternatively, specialize
  \cite{patterns2}*{Corollary 8.14} using \cite{patterns2}*{Example
    9.6}.
\end{proof}

Since $(\Lopn)_{[1]}$ is isomorphic to the \emph{set}
\[T_{n} := \{(i,j) : 0 \leq i \leq j \leq i+1 \leq n\},\]
we get the following special case:
\begin{cor}\label{cor:freealgLopn}
If $\uV$ is a monoidal \icat{} compatible with colimits indexed by \igpds{}, then the restriction
  \[ U_{\Lopn} \colon \Alg_{\Lopn}(\uV) \to \Fun(T_{n}, \uV)\]
  has a left adjoint $F_{\Lopn}$ such that for $\Phi \colon T_{n} \to \uV$ we have
  \[ (T_{\Lopn}\Phi)(i,i) \simeq \coprod_{n = 0}^{\infty} \Phi(i,i)^{\otimes n},\]
  \[ (T_{\Lopn}\Phi)(i,i+1) \simeq \coprod_{n,m = 0}^{\infty} \Phi(i,i)^{\otimes n} \otimes \Phi(i,i+1) \otimes \Phi(i+1,i+1)^{\otimes m}.\]
  Moreover, the adjunction is monadic.
\end{cor}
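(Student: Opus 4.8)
The plan is to obtain everything by specializing \cref{propn:freealg} to the \gnsiopd{} $\uO = \Lopn$, so the first thing I would check is that its hypothesis holds. The fibre $(\Lopn)_{[0]}$ consists of the cellular maps $[0] \to [n]$, i.e.\ the points $\{0\},\ldots,\{n\}$, and since $\Lopn = (\bbL_{/[n]})^{\op}$ is the opposite of an ordinary category this fibre is a discrete set, hence an \igpd{}. Thus \cref{propn:freealg} immediately supplies the left adjoint $F_{\Lopn}$ to $U_{\Lopn}\colon \Alg_{\Lopn}(\uV) \to \Fun((\Lopn)_{[1]}, \uV) \simeq \Fun(T_{n}, \uV)$ (using the given isomorphism $(\Lopn)_{[1]} \cong T_{n}$), the monadicity, and the colimit formula for $T_{\Lopn} = U_{\Lopn}F_{\Lopn}$. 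It then remains only to evaluate this formula at the two types of object of $T_{n}$.

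The key step is therefore to identify, for $X \in (\Lopn)_{[1]}$, the indexing \igpd{} $\Act_{\Lopn}(X)$ together with the tensor factors $\Phi(Y_{(l-1)l})$. Unwinding the definitions, the object $X = (i,j)$ is the cellular map $\psi\colon [1] \to [n]$ with $0 \mapsto i$, $1 \mapsto j$; a morphism $Y \to X$ in $\Lopn$ is by definition a morphism $X \to Y$ in $\bbL_{/[n]}$, i.e.\ a map $g\colon [1] \to [m]$ in $\simp$ with $\phi \circ g = \psi$, where $Y = \phi\colon [m] \to [n]$ is cellular, and it is active precisely when $g$ is endpoint-preserving, forcing $g(0)=0$, $g(1)=m$ and hence $\phi(0)=i$, $\phi(m)=j$. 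Thus $\Act_{\Lopn}(X)$ is the \emph{set} of cellular maps $\phi\colon [m]\to[n]$ (for varying $m \ge 0$) with prescribed endpoints $\phi(0)=i$, $\phi(m)=j$, and the cocartesian pushforward $Y_{(l-1)l}$ is the restriction $\phi|_{\{l-1,l\}}$, i.e.\ the pair $(\phi(l-1),\phi(l)) \in T_{n}$. Since this \igpd{} is discrete, the colimit of \cref{propn:freealg} becomes a coproduct of the terms $\Phi(\phi(0),\phi(1)) \otimes \cdots \otimes \Phi(\phi(m-1),\phi(m))$.

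The remaining work is an elementary enumeration using cellularity. For $X = (i,i)$, a monotone cellular map with $\phi(0)=\phi(m)=i$ must be constant at $i$, so for each $m\ge 0$ there is a unique such $\phi$, contributing $\Phi(i,i)^{\otimes m}$ and yielding the first formula. For $X=(i,i+1)$, a monotone cellular map with $\phi(0)=i$, $\phi(m)=i+1$ takes only the values $i$ and $i+1$ and so makes a single jump; writing $a \ge 0$ for the number of edges equal to $(i,i)$ before the jump and $b \ge 0$ for those equal to $(i+1,i+1)$ after it, each pair $(a,b)$ determines a unique $\phi$ on $[a+b+1]$ contributing $\Phi(i,i)^{\otimes a} \otimes \Phi(i,i+1) \otimes \Phi(i+1,i+1)^{\otimes b}$, which gives the second formula. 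The monadicity is inherited directly from \cref{propn:freealg} with no further argument.

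I expect the main obstacle to be the middle step: correctly translating ``active morphism in $\Lopn$'' through the two opposites defining $\Lopn = (\bbL_{/[n]})^{\op}$ and its structure map to $\Dop$, so as to pin down both the indexing set of cellular chains and the identification of $Y_{(l-1)l}$ with the edge $(\phi(l-1),\phi(l))$. Once this dictionary is in place the enumeration is forced by the cellularity condition, and the passage from colimit to coproduct is automatic because $\Lopn$ is a $1$-category.
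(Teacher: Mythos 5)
Your proposal is correct and follows essentially the same route as the paper: both specialize \cref{propn:freealg} to $\Lopn$, identify $\Act_{\Lopn}((i,j))$ as the discrete set of cellular maps $\phi \colon [m] \to [n]$ with $\phi(0)=i$ and $\phi(m)=j$ (the paper phrases this via the commutative triangle over $[n]$ with the unique active map $[1]\to[m]$), and then enumerate these maps to turn the colimit into the stated coproducts. Your unwinding of the opposites and of the cocartesian pushforwards $Y_{(l-1)l}$, and the $(a,b)$-indexed enumeration for $(i,i+1)$, just make explicit what the paper leaves to the reader.
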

\begin{proof}
  Apply \cref{propn:freealg}, observing that
  $\Act_{\Lopn}((i,j))$ is the set of commutative triangles
  \[
    \begin{tikzcd}
      {[1]} \arrow{rr}{\alpha} \arrow{dr}[swap]{(i,j)} & & {[m]} \arrow{dl}{\phi} \\
       & {[n],}
    \end{tikzcd}
  \]
  where $\alpha$ is the unique active map to $[m]$ (and $\phi$ is cellular). This forces $\phi$ to take only the values $i$ and $j$ (since $j$ is by assumption either $i$ or $i+1$), giving the desired description of the colimit over this set.
\end{proof}

\begin{observation}\label{obs:LopninertBCeq}
  With $\uV$ as above, we have for any inert morphism $\iota \colon [n] \to [m]$ a commutative square\footnote{Here we somewhat abusively use $\iota$ also for the induced map of \gnsiopds{} $\Lopn \to \Lop_{/[m]}$ and its restriction to a function $T_{n} \to T_{m}$.}
  \[
    \begin{tikzcd}
      \Alg_{\Lop_{/[m]}}(\uV) \arrow{r}{\iota^{*}}  \arrow{d}{U_{\Lop_{/[m]}}} & \Alg_{\Lopn}(\uV) \arrow{d}{U_{\Lopn}} \\
      \Fun(T_{m}, \uV) \arrow{r}{\iota^{*}} & \Fun(T_{n}, \uV).
    \end{tikzcd}
  \]
  The formula in 
  \cref{cor:freealgLopn} shows that the Beck--Chevalley transformation
  \[ F_{\Lopn} \iota^{*} \to \iota^{*} F_{\Lop_{/[m]}}\]
  is an equivalence. In other words, we have a commutative  mate square
  \[
    \begin{tikzcd}
      \Alg_{\Lop_{/[m]}}(\uV) \arrow{r}{\iota^{*}}   & \Alg_{\Lopn}(\uV)  \\
      \Fun(T_{m}, \uV) \arrow{r}{\iota^{*}}  \arrow{u}{F_{\Lop_{/[m]}}} & \Fun(T_{n}, \uV) \arrow{u}{F_{\Lopn}}.
    \end{tikzcd}
  \]
\end{observation}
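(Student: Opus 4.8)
The first square commutes essentially by definition: each of $U_{\Lopn}$ and $U_{\Lop_{/[m]}}$ is given by restricting an algebra to the objects of $T_{n}$, respectively $T_{m}$, and restriction along $\iota$ commutes with this evaluation, so $U_{\Lopn}\iota^{*} \simeq \iota^{*}U_{\Lop_{/[m]}}$. The Beck--Chevalley transformation $F_{\Lopn}\iota^{*} \to \iota^{*}F_{\Lop_{/[m]}}$ is then the mate of this commuting square with respect to the two free--forgetful adjunctions of \cref{cor:freealgLopn}. The plan is to show it is an equivalence by reducing, via monadicity, to a comparison of the explicit free-algebra formulas, where inertness of $\iota$ makes the two sides visibly agree.

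First I would use that $U_{\Lopn}$ is conservative, which holds since $F_{\Lopn} \dashv U_{\Lopn}$ is monadic by \cref{cor:freealgLopn}; it therefore suffices to check that the mate becomes an equivalence after applying $U_{\Lopn}$. Composing with $U_{\Lopn}$ and using the commuting square, the mate is carried to a natural transformation of underlying endofunctors
\[ T_{\Lopn}\iota^{*} \longrightarrow \iota^{*}T_{\Lop_{/[m]}}. \]
Unwinding the mate through the relevant unit and counit identifies this, under the colimit description of \cref{propn:freealg}, with the map induced by pushing active maps forward along $\iota$; equivalently, under the coproduct decompositions of \cref{cor:freealgLopn} it is the evident reindexing of the summands.

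It then remains to evaluate both sides at the objects of $T_{n}$. At a vertex $(i,i)$ the formula of \cref{cor:freealgLopn} gives $(T_{\Lopn}\iota^{*}\Phi)(i,i) \simeq \coprod_{k} \Phi(\iota(i),\iota(i))^{\otimes k}$, while the target is $(T_{\Lop_{/[m]}}\Phi)(\iota(i),\iota(i))$, which is the same coproduct. At an edge $(i,i+1)$ the key point enters: since $\iota$ is inert it is an interval inclusion, so $\iota(i+1) = \iota(i)+1$ and $(i,i+1)$ is sent to the edge $(\iota(i),\iota(i)+1)$ of $[m]$. Hence $\iota^{*}\Phi$ evaluated at the vertices and edge adjacent to $(i,i+1)$ agrees with $\Phi$ evaluated at those adjacent to $(\iota(i),\iota(i)+1)$, and the two coproduct formulas agree term by term, so the map is the identity on each summand and thus an equivalence.

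The main obstacle is the bookkeeping in the second paragraph: one must verify that the abstractly defined mate, pushed through the conservative $U_{\Lopn}$, really is the evident reindexing between the coproduct formulas rather than some twist thereof. Once this is checked the conclusion is immediate, and the entire argument rests on the single geometric fact that an inert map carries edges of $[n]$ to edges of $[m]$ with matching endpoints---exactly the local data on which the formula of \cref{cor:freealgLopn} depends. (A degeneracy or a non-consecutive inclusion would send some edge to a longer path, and the comparison would fail.)
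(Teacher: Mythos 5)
Your argument is correct and is essentially the paper's: the Observation offers no proof beyond ``the formula in \cref{cor:freealgLopn} shows'' that the Beck--Chevalley map is an equivalence, and your write-up simply makes that explicit, reducing via conservativity of the monadic $U_{\Lopn}$ to a termwise comparison of the free-algebra formulas, where inertness of $\iota$ (being an interval inclusion, so edges map to edges with matching endpoints) makes the two coproducts agree. The only point you flag as needing care---that the mate really is the evident reindexing of summands---is immediate from the colimit formula in \cref{propn:freealg}, since at each object of $T_{n}$ the Beck--Chevalley map is induced by the map of indexing sets $\Act_{\Lopn}(i,j) \to \Act_{\Lop_{/[m]}}(\iota(i),\iota(j))$ given by postcomposition with $\iota$, which is a bijection.
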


\begin{notation}
  Let use write
  \[ \Alg^{\Seg}_{\Lopn}(\uc{V}) := \Bimod(\uc{V}) \times_{\Alg(\uc{V})} \cdots \times_{\Alg(\uc{V})} \Bimod(\uc{V}) \]
  for the $n$ times iterated fibre product, and $U_{\Lopn}^{\Seg} \colon \Alg^{\Seg}_{\Lopn}(\uV) \to
  \Fun(T_{n}, \uV)$ for the fibre product
  $U_{\Dop_{/[1]}} \times_{U_{\Dop}} \cdots \times_{U_{\Dop}} U_{\Dop_{/[1]}}$
  of the restrictons
  \[ U_{\Dop_{/[1]}} \colon \Bimod(\uV) \to \Fun(T_{1}, \uV), \quad U_{\Dop} \colon \Alg(\uV) \to \Fun(T_{0}, \uV),\]
  using the decomposition $T_{n} \cong T_{1}\amalg_{T_{0}} \cdots \amalg_{T_{0}} T_{1}$. We also write
  \[ j^{*}_{n,\Seg} \colon \Alg_{\Lopn}(\uV) \to \Alg^{\Seg}_{\Lopn}(\uV) \]
  for the functor induced by the inclusions $\Dop_{/[i]} \hookrightarrow \Lopn$, $i = 0,1$.
\end{notation}

The next results will allow us to identify the left adjoint of $U_{\Lopn}^{\Seg}$ and show that this is a monadic adjunction.

\begin{propn}\label{propn:ladjcommatelimadj}
  Suppose $\uc{A}, \uc{B} \colon \uc{I} \to \CatI$ are functors and $\rho \colon \uc{A} \to \uc{B}$ is a natural transformation such that the component $\rho(x) \colon \uc{A}(x) \to \uc{B}(x)$ has a left adjoint $\lambda(x)$ for every $x$. If the mate squares
  \[
    \begin{tikzcd}
      \uc{B}(x) \arrow{r}{\lambda(x)} \arrow{d}[swap]{\uc{B}(f)} &  \uc{A}(x) \arrow{d}{\uc{A}(f)} \\
      \uc{B}(x') \arrow{r}[swap]{\lambda(x')} \arrow[Rightarrow]{ur} & \uc{A}(x')
    \end{tikzcd}
  \]
  commute for all maps $f \colon x \to x'$ in $\uc{I}$, \ie{} if the
  Beck--Chevalley transformations
  $\lambda(x')\uc{B}(f) \to \uc{A}(f) \lambda(x)$ are all invertible,
  then the left adjoints form a natural transformation
  $\lambda \colon \uc{B} \to \uc{A}$, such that on limits the functor
  $\lim_{x \in \uc{I}} \rho(x) \colon \lim_{x \in \uc{I}} \uc{A}(x)
  \to \lim_{x \in \uc{I}} \uc{B}(x)$ is a right adjoint with left
  adjoint $\lim_{x \in \uc{I}} \lambda(x)$.
\end{propn}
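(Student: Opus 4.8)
The plan is to first assemble the pointwise left adjoints into a natural transformation, then to upgrade the resulting pointwise adjunctions to a single adjunction in the functor $\infty$-category, and finally to observe that taking limits preserves adjunctions. First I would reinterpret the data: a natural transformation $\rho \colon \uc{A} \to \uc{B}$ is the same thing as a functor $\uc{I} \to \Fun([1], \CatI)$ sending $x$ to the arrow $\rho(x) \colon \uc{A}(x) \to \uc{B}(x)$ and $f \colon x \to x'$ to the naturality square. The hypotheses say exactly that this functor factors through the (non-full) subcategory of $\Fun([1], \CatI)$ whose objects are functors admitting a left adjoint and whose morphisms are the squares satisfying the Beck--Chevalley condition. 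On this subcategory, passing to left adjoints defines an equivalence onto the corresponding subcategory of functors admitting right adjoints, interchanging the two and sending each Beck--Chevalley square to its mate square (the calculus of mates). Composing $\uc{I} \to \Fun([1],\CatI)$ with this equivalence produces a functor classifying a natural transformation $\lambda \colon \uc{B} \to \uc{A}$ whose component at $x$ is the left adjoint $\lambda(x)$ and whose naturality squares are precisely the now-commuting mate squares. This gives the first claim.

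Next I would promote the pointwise adjunctions $\lambda(x) \dashv \rho(x)$ to a single adjunction between $\lambda$ and $\rho$ inside the $(\infty,2)$-category $\Fun(\uc{I}, \CatI)$. The cleanest way is to invoke the criterion that a $1$-morphism of $\Fun(\uc{I}, \CatI)$ admits a left adjoint there precisely when each of its components does and the resulting Beck--Chevalley squares commute; both hold by hypothesis and by the previous step. Unwinding this, one obtains unit and counit modifications $\mathrm{id}_{\uc{B}} \to \rho\lambda$ and $\lambda\rho \to \mathrm{id}_{\uc{A}}$ satisfying the triangle identities, refining the pointwise $\eta(x)$ and $\epsilon(x)$. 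Intuitively this is possible because, with $\lambda$ and $\rho$ fixed, the space of compatible adjunction data is contractible whenever nonempty, so the pointwise choices can be made coherently; the role of Beck--Chevalley is exactly to ensure that nonemptiness is preserved by the transition functors. I expect this coherent assembly of unit and counit data---rather than the bare naturality of $\lambda$---to be the crux of the argument.

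Finally, given the adjunction $\lambda \dashv \rho$ in $\Fun(\uc{I}, \CatI)$, I would conclude by applying a $2$-functor that computes the limit. Writing $\mathbf{1}$ for the terminal object (the constant functor at the point), the assignment $\uc{A} \mapsto \lim_{\uc{I}} \uc{A}$ is corepresented by $\mathbf{1}$, in the sense that $\lim_{\uc{I}} \uc{A}$ is the mapping $\infty$-category from $\mathbf{1}$ to $\uc{A}$ in the $(\infty,2)$-category $\Fun(\uc{I}, \CatI)$. Since this hom-functor is a $2$-functor and $2$-functors preserve adjunctions, the adjunction $\lambda \dashv \rho$ is carried to an adjunction $\lim_{\uc{I}} \lambda \dashv \lim_{\uc{I}} \rho$, which is the assertion. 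As an alternative to the $(\infty,2)$-categorical language, I could run the same argument fibrationally: unstraighten $\uc{A}$ and $\uc{B}$ to cocartesian fibrations over $\uc{I}$, use the relative adjoint functor theorem of Lurie to build a relative left adjoint from the pointwise ones, observe that the Beck--Chevalley condition is exactly what makes this relative adjoint preserve cocartesian edges (hence classify $\lambda$), and then restrict the relative adjunction to cocartesian sections, which compute the limits.
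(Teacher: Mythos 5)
Your proposal is correct and follows essentially the same route as the paper: the paper cites \cite{HA}*{Proposition 7.3.2.11} to obtain the left adjoint as a morphism of cocartesian fibrations (your ``fibrational'' alternative is exactly this), transports it through straightening to an adjunction in the $(\infty,2)$-category $\FUN(\uc{I},\CATI)$, and then passes the unit and counit to the limit. Your finish via corepresentability of $\lim_{\uc{I}}$ by the terminal object and the fact that $2$-functors preserve adjunctions is a slightly more explicit way of phrasing that last step, but the substance is identical.
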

\begin{proof}
  By \cite{HA}*{Proposition 7.3.2.11} these conditions imply that
  $\rho$ has an adjoint when viewed as a morphism of cocartesian
  fibrations. Since the straightening equivalence is an equivalence of
  \itcats{}, it follows that $\rho$ has a left adjoint in the \itcat{}
  $\FUN(\uc{I}, \CATI)$ of functors. In particular we have a unit and
  counit transformation in $\FUN(\uc{I}, \CATI)$ that in the limit
  give the unit and counit of the desired adjunction on limits.
\end{proof}

\begin{cor}\label{cor:ladjcommonradj}
  Suppose $\rho \colon \uc{A} \to \uc{B}$ and $\lambda \colon \uc{B} \to \uc{A}$ are as in \cref{propn:ladjcommatelimadj}. If furthermore $\rho(x)$ is a monadic right adjoint for every $x$, then $\lim_{x \in \uc{I}} \rho(x)$ is again a monadic right adjoint, and the associated monad has the limit $\lim_{x \in \uc{I}} \rho(x)\lambda(x)$ as its underlying endofunctor.
\end{cor}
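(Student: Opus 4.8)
The plan is to verify the hypotheses of the monadicity theorem \cite{HA}*{Theorem 4.7.3.5} for the functor $R := \lim_{x \in \uc{I}} \rho(x)$. By \cref{propn:ladjcommatelimadj} this already has a left adjoint $L := \lim_{x \in \uc{I}} \lambda(x)$, so it remains to check that $R$ is conservative and that $\lim_{x} \uc{A}(x)$ admits colimits of $R$-split simplicial objects which $R$ preserves. The description of the underlying endofunctor of the resulting monad will then fall out of the construction.

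Two of these points are formal. For the underlying endofunctor, recall that $\lim_{\uc{I}} \colon \FUN(\uc{I}, \CATI) \to \CATI$ is a functor and hence preserves composition of $1$-morphisms; applied to the composite $\rho \circ \lambda$ in the functor \icat{}, whose value at $x$ is $\rho(x)\lambda(x)$, this yields
\[ RL \;=\; \Bigl(\lim_{x} \rho(x)\Bigr)\Bigl(\lim_{x} \lambda(x)\Bigr) \;\simeq\; \lim_{x}\bigl(\rho(x)\lambda(x)\bigr), \]
which is precisely the underlying endofunctor of the monad $RL$ of the adjunction $L \dashv R$. For conservativity, recall that equivalences in a limit of \icats{} are detected by the projections $p_{x} \colon \lim_{y} \uc{A}(y) \to \uc{A}(x)$; since $p_{x} R \simeq \rho(x) p_{x}$ and each $\rho(x)$ is conservative (being monadic), if $Rf$ is an equivalence then so is each $p_{x} f$, and hence so is $f$.

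The real content, and the step I expect to be the main obstacle, is the colimit condition. Let $U_{\bullet}$ be an $R$-split simplicial object in $\lim_{x} \uc{A}(x)$. I would show its colimit exists and is computed projectionwise, using the standard fact that a diagram in a limit of \icats{} admits a colimit preserved by every projection as soon as each projected diagram has a colimit and each transition functor $\uc{A}(f)$ preserves it. Pointwise existence is straightforward: $p_{x} U_{\bullet}$ is $\rho(x)$-split, since $\rho(x) p_{x} U_{\bullet} \simeq p_{x} R U_{\bullet}$ is a projection of a split simplicial object and hence split, so monadicity of $\rho(x)$ gives a colimit of $p_{x} U_{\bullet}$ that $\rho(x)$ preserves.

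The delicate point is that the transition functors $\uc{A}(f) \colon \uc{A}(x) \to \uc{A}(x')$ need not preserve arbitrary colimits, so here I must exploit the Beck--Chevalley setup. From the naturality square $\rho(x') \uc{A}(f) \simeq \uc{B}(f) \rho(x)$ together with the fact that split colimits are absolute, the functor $\uc{B}(f)$ preserves the colimit of the split object $\rho(x) p_{x} U_{\bullet}$; applying the conservative functor $\rho(x')$ to the canonical comparison map for $\uc{A}(f)$ then exhibits it as an equivalence, so $\uc{A}(f)$ does preserve the colimit of $p_{x} U_{\bullet}$. Granting this, $U_{\bullet}$ has a colimit whose projection at $x$ is the colimit of $p_{x} U_{\bullet}$, and $R$ preserves it because each projection does and equivalences are detected projectionwise. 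The monadicity theorem now applies, and combined with the second paragraph this identifies $\lim_{x} \rho(x)$ as a monadic right adjoint whose monad has underlying endofunctor $\lim_{x} \rho(x)\lambda(x)$.
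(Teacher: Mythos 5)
Your argument is correct and follows essentially the same route as the paper: verify the hypotheses of the monadicity theorem by checking conservativity via the jointly conservative projections out of the limit, produce the required split simplicial colimits componentwise, and identify the underlying endofunctor of the monad by functoriality of limits. The one place you go beyond the paper's write-up is in explicitly verifying that the transition functors $\uc{A}(f)$ preserve the colimits of the projected diagrams (using that split colimits are absolute together with conservativity of $\rho(x')$) --- a compatibility the paper leaves implicit in its appeal to Li-Bland's theorem on colimits in limits of $\infty$-categories --- and this is a genuine hypothesis of that theorem, so your extra step is welcome rather than redundant.
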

\begin{proof}
  Let us write $\uc{A}' := \lim_{x \in \uc{I}} \uc{A}(x)$, etc. By the
  monadicity theorem we must check that $\rho'$ is conservative and
  that $\uc{A}'$ has $\rho'$-split simplicial colimits and these are
  preserved by $\rho'$. For the first point, suppose a morphism
  $\alpha \colon a \to a'$ in $\uc{A}'$ maps to an equivalence in
  $\uc{B}'$. Then its images in $\uc{B}(x)$ are all equivalences, so
  by the componentwise conservativity of $\rho$ the image of $\alpha$
  in each $\uc{A}(x)$ is an equivalence. This implies that $\alpha$ is
  an equivalence, since the projections from a limit in $\CatI$ are
  jointly conservative \cite{HA}*{Proposition 5.2.2.36}. Next, suppose
  we have a $\rho'$-split simplicial diagram
  $\phi \colon \Dop \to \uc{A}'$, so that there exists an extension
  $\psi$ of $\rho\phi$ to a split simplicial diagram in
  $\uc{B}'$. Then the projection $\psi_{x}$ of $\psi$ in $\uc{B}(x)$
  is a split simplicial diagram that extends the image under $\rho(x)$
  of the projection $\phi_{x}$ of $\phi$ to $\uc{A}(x)$. In other
  words, $\phi_{x}$ is a $\rho(x)$-split simplicial diagram in
  $\uc{A}(x)$; by assumption it therefore has a colimit and this is
  preserved by $\rho(x)$. By \cite{LiBlandSpan}*{Theorem 2.1} this means that $\phi$
  has a colimit in $\uc{A}'$ and this is preserved by $\rho'$, as
  required. The description of the associated monad simply follows
  from functoriality of limits, since the left adjoint is
  $\lim_{x \in \uc{I}} \lambda(x)$.
\end{proof}

\begin{propn}\label{Segcondcolim}
  Let $\uV$ be a monoidal \icat{} compatible with colimits indexed by \igpds{}. Then the restriction
  \[ \Alg_{\Lopn}(\uV) \to \Alg^{\Seg}_{\Lopn}(\uV) \]
  is an equivalence.
\end{propn}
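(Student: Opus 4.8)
The plan is to realise both sides as monadic over $\Fun(T_n,\uV)$ and then compare the two resulting monads. On the left we already have the monadic forgetful functor $U_{\Lopn}\colon\Alg_{\Lopn}(\uV)\to\Fun(T_n,\uV)$ with its explicit monad $T_{\Lopn}$ from \cref{cor:freealgLopn}. The restriction $j^{*}_{n,\Seg}$ lies over $\Fun(T_n,\uV)$, in the sense that $U^{\Seg}_{\Lopn}\circ j^{*}_{n,\Seg}\simeq U_{\Lopn}$. Once we know that $U^{\Seg}_{\Lopn}$ is also monadic, with monad $T^{\Seg}$, standard facts about monadic functors over a fixed base tell us that $j^{*}_{n,\Seg}$ is an equivalence if and only if the induced morphism of monads $T^{\Seg}\to T_{\Lopn}$ is an equivalence, and since both are monads this can be checked on underlying endofunctors.

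To handle the Segal side I would apply \cref{cor:ladjcommonradj}. Take $\uc{I}$ to be the zigzag category $x_1\to y_1\leftarrow x_2\to\cdots\leftarrow x_n$ with $n$ ``bimodule'' vertices $x_i$ and $n-1$ ``algebra'' vertices $y_i$ indexing the iterated fibre product, and define $\uc{A},\uc{B}\colon\uc{I}\to\CatI$ by $\uc{A}(x_i)=\Bimod(\uV)$, $\uc{A}(y_i)=\Alg(\uV)$, $\uc{B}(x_i)=\Fun(T_1,\uV)$, $\uc{B}(y_i)=\Fun(T_0,\uV)$, with all structure maps the restrictions along the two inert maps $[0]\to[1]$, and let $\rho\colon\uc{A}\to\uc{B}$ be the forgetful transformation with components $U_{\Dop_{/[1]}}$ and $U_{\Dop}$. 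By definition $\lim_{\uc{I}}\uc{A}=\Alg^{\Seg}_{\Lopn}(\uV)$, while $\lim_{\uc{I}}\uc{B}=\Fun(T_n,\uV)$ because $\Fun(-,\uV)$ carries the pushout decomposition $T_n\cong T_1\amalg_{T_0}\cdots\amalg_{T_0}T_1$ to exactly this iterated pullback. Since every map to $[0]$ or $[1]$ is cellular we have $\Lop_{/[0]}=\Dop$ and $\Lop_{/[1]}=\Dop_{/[1]}$, so componentwise monadicity of $\rho$ is the $n=0,1$ case of \cref{cor:freealgLopn}, and the Beck--Chevalley condition for the maps of $\uc{I}$ is precisely \cref{obs:LopninertBCeq} for the inert maps $[0]\to[1]$. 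Hence \cref{cor:ladjcommonradj} applies and shows that $U^{\Seg}_{\Lopn}=\lim_{\uc{I}}\rho$ is monadic with monad $T^{\Seg}=\lim_{\uc{I}}\rho(x)\lambda(x)$, the limit of the free-algebra monads at the vertices.

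It then remains to compare the endofunctors $T^{\Seg}$ and $T_{\Lopn}$. Unwinding the limit, for $\Phi\in\Fun(T_n,\uV)$ the component $(T^{\Seg}\Phi)(i,i+1)$ is computed at the unique bimodule vertex containing $(i,i+1)$ as the free bimodule $\coprod_{k,l\ge 0}\Phi(i,i)^{\otimes k}\otimes\Phi(i,i+1)\otimes\Phi(i+1,i+1)^{\otimes l}$, while $(T^{\Seg}\Phi)(i,i)$ is the free associative algebra $\coprod_{k\ge 0}\Phi(i,i)^{\otimes k}$, the possibly several descriptions coming from adjacent vertices agreeing via the Beck--Chevalley equivalences. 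These are exactly the formulas of \cref{cor:freealgLopn} for $T_{\Lopn}$, so the comparison morphism $T^{\Seg}\to T_{\Lopn}$ induced by $j^{*}_{n,\Seg}$ is an equivalence of underlying endofunctors, hence of monads, and therefore $j^{*}_{n,\Seg}$ is an equivalence.

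The step requiring the most care is the bookkeeping identifying the abstract comparison with the geometric one: one must check that $j^{*}_{n,\Seg}$ genuinely commutes with the forgetful functors (so that it corresponds to a monad morphism at all) and that, under $\lim_{\uc{I}}\uc{B}\simeq\Fun(T_n,\uV)$, the induced endofunctor map is the canonical comparison between the two explicit free-object formulas rather than merely some abstract equivalence. Verifying that $\lim_{\uc{I}}\uc{B}=\Fun(T_n,\uV)$ and that the structure maps of $\uc{I}$ are exactly the inert restrictions governed by \cref{obs:LopninertBCeq} is the other point to nail down, but both are direct once the zigzag diagram is set up.
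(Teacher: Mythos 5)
Your proposal is correct and follows essentially the same route as the paper: exhibit both $U_{\Lopn}$ and $U^{\Seg}_{\Lopn}$ as monadic over $\Fun(T_{n},\uV)$ (the latter via \cref{cor:ladjcommonradj} together with the Beck--Chevalley equivalences of \cref{obs:LopninertBCeq}), then reduce to comparing the two monads and check this using the explicit formulas of \cref{cor:freealgLopn}. You merely make explicit some bookkeeping (the zigzag indexing diagram and the identification $\lim_{\uc{I}}\uc{B}\simeq\Fun(T_{n},\uV)$) that the paper leaves implicit in its notation.
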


\begin{proof}
  We have a commutative triangle
  \[
    \begin{tikzcd}
      \Alg_{\Lopn}(\uc{V}) \arrow{rr}{j_{n,\Seg}^{*}} \arrow{dr}[swap]{U_{\Lopn}} & &  \Alg^{\Seg}_{\Lopn}(\uc{V}) \arrow{dl}{U_{\Lopn}^{\Seg}} \\
       & \Fun(T_{n}, \uV).
    \end{tikzcd}
  \]
  Here both the downward functors are monadic right adjoints: For $U_{\Lopn}$ this holds by \cref{cor:freealgLopn} and for $U^{\Seg}_{\Lopn}$ by the same combined with \cref{cor:ladjcommonradj}, since we saw in \cref{obs:LopninertBCeq} that the associated Beck--Chevalley transformations are invertible. To show that the top horizontal functor is an equivalence it therefore suffices by
  \cite{HA}*{Corollary 4.7.3.16} to show that the induced map of monads is an equivalence. If we write $F_{\Lopn}^{\Seg}$ for the left adjoint to $U_{\Lopn}^{\Seg}$ and $T_{\Lopn}^{\Seg} := U_{\Lopn}^{\Seg}F_{\Lopn}^{\Seg}$ for the corresponding monad, then this amounts to proving that the natural transformation
  \[ T_{\Lopn} \to T_{\Lopn}^{\Seg} \simeq T_{\Dop_{/[1]}}
    \times_{T_{\Dop}} \cdots \times_{T_{\Dop}} T_{\Dop_{/[1]}} \]
  between endofunctors of $\Fun(T_{n}, \uc{V})$ is an
  equivalence. This follows from the formula for $T_{\Lopn}$ in
  \cref{cor:freealgLopn} and the compatibility we noted in
  \cref{obs:LopninertBCeq}.
\end{proof}

\begin{cor}\label{cor:Segcond}
  For any monoidal \icat{} $\uV$, the restriction
  \[ j_{n,\Seg}^{*} \colon \Alg_{\Lopn}(\uV) \to \Alg_{\Lopn}^{\Seg}(\uV)\]
  is an equivalence.
\end{cor}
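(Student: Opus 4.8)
The plan is to reduce the general statement to \cref{Segcondcolim} by embedding $\uV$ into a monoidal \icat{} that is compatible with small colimits. Concretely, I would choose a fully faithful monoidal functor $\uV \hookrightarrow \uc{W}$ with $\uc{W}$ compatible with small (hence \igpd{}-indexed) colimits; the monoidal Yoneda embedding of $\uV$ into its presheaf \icat{} equipped with the Day convolution product (see \cite{HA}) provides such a $\uc{W}$, since Day convolution preserves colimits in each variable separately. Applying $\Alg_{\Lopn}(-)$, $\Alg^{\Seg}_{\Lopn}(-)$ and the forgetful functors $U_{\Lopn}$, $U^{\Seg}_{\Lopn}$ to this embedding yields a commutative prism relating the restriction $j_{n,\Seg}^{*}$ for $\uV$ to the one for $\uc{W}$, sitting over the evident map $\Fun(T_{n},\uV) \to \Fun(T_{n},\uc{W})$.

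The two vertical comparison functors are fully faithful. For $\Alg_{\Lopn}(\uV) \to \Alg_{\Lopn}(\uc{W})$ this holds because a fully faithful monoidal functor induces a fully faithful map $\uV^{\otimes} \to \uc{W}^{\otimes}$ of \icats{} over $\Dop$ (morphism spaces over $[k]$ decompose as products via the inert maps), hence a fully faithful functor on sections and thus on algebras. For $\Alg^{\Seg}_{\Lopn}(\uV) \to \Alg^{\Seg}_{\Lopn}(\uc{W})$ it holds because this functor is a finite limit (an iterated fibre product) of such fully faithful functors, and fully faithful functors are stable under limits in $\CatI$. Moreover, in each case the essential image consists exactly of those objects whose underlying diagram in $\Fun(T_{n},\uc{W})$ factors through $\Fun(T_{n},\uV)$: membership of an object of $\uc{W}^{\otimes}$ in the full subcategory $\uV^{\otimes}$ is detected componentwise on the underlying $T_{n}$-indexed objects, and for the fibre product this reduces to the analogous statement for $\Bimod$ and $\Alg$.

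Full faithfulness of $j_{n,\Seg}^{*} \colon \Alg_{\Lopn}(\uV) \to \Alg^{\Seg}_{\Lopn}(\uV)$ then follows by two-out-of-three: the composite $\Alg_{\Lopn}(\uV) \to \Alg_{\Lopn}(\uc{W}) \to \Alg^{\Seg}_{\Lopn}(\uc{W})$, whose second map is the equivalence of \cref{Segcondcolim}, is fully faithful; it coincides with the composite $\Alg_{\Lopn}(\uV) \to \Alg^{\Seg}_{\Lopn}(\uV) \to \Alg^{\Seg}_{\Lopn}(\uc{W})$; and the last functor is fully faithful. For essential surjectivity, take $S \in \Alg^{\Seg}_{\Lopn}(\uV)$ with image $\bar{S}$ in $\Alg^{\Seg}_{\Lopn}(\uc{W})$, and let $A \in \Alg_{\Lopn}(\uc{W})$ be a preimage of $\bar{S}$ under the equivalence of \cref{Segcondcolim}. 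The commuting triangle gives $U_{\Lopn}(A) \simeq U^{\Seg}_{\Lopn}(\bar{S})$, which lies in $\Fun(T_{n},\uV)$ because $U^{\Seg}_{\Lopn}(S)$ does and $U^{\Seg}$ is natural in the embedding. By the essential-image description above, $A$ therefore lifts to some $A_{0} \in \Alg_{\Lopn}(\uV)$, and $j_{n,\Seg}^{*}(A_{0})$ has the same image as $S$ in $\Alg^{\Seg}_{\Lopn}(\uc{W})$; since that last functor is fully faithful, $j_{n,\Seg}^{*}(A_{0}) \simeq S$.

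The main obstacle is the bookkeeping around essential images: one must verify carefully that, for both $\Alg_{\Lopn}$ and the Segal fibre product $\Alg^{\Seg}_{\Lopn}$, an object of the $\uc{W}$-version lies in the essential image of the $\uV$-version precisely when its underlying $T_{n}$-indexed diagram takes values in $\uV$, and that these two conditions are matched through the commuting triangle by the forgetful functors $U_{\Lopn}$ and $U^{\Seg}_{\Lopn}$. Once this objectwise detection is in place, together with the existence of the fully faithful monoidal embedding into a cocomplete $\uc{W}$, the remainder of the argument is purely formal.
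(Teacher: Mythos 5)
Your proposal is correct and follows essentially the same route as the paper: embed $\uV$ monoidally and fully faithfully into its presheaf category with Day convolution, apply \cref{Segcondcolim} there, and use the fact that the essential image of $\Alg_{\uO}(\uV)$ in $\Alg_{\uO}(\PSh(\uV))$ is detected by whether the underlying $T_{n}$-indexed diagram lands in $\uV$. The only detail the paper adds is the remark that one should first assume $\uV$ small (or pass to a larger universe) so that the presheaf category is available.
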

\begin{proof}
  We may assume without loss of generality that $\uV$ is small. (If not, we simply apply the same argument in a larger universe.) Then the Yoneda embedding $y \colon \uV \hookrightarrow \PSh(\uV)$ exhibits $\uV$ as a full monoidal subcategory of the Day convolution monoidal structure on presheaves \cite{patterns2}*{\S 6}. We have a commutative square

  \[
    \begin{tikzcd}
      \Alg_{\Lopn}(\uV) \arrow{r}{j_{n,\Seg}^{*}} \arrow[hookrightarrow]{d}{y_{*}} & \Alg^{\Seg}_{\Lopn}(\uV) \arrow[hookrightarrow]{d}{y_{*}} \\
      \Alg_{\Lopn}(\PSh(\uV)) \arrow{r}{j_{n,\Seg}^{*}} & \Alg^{\Seg}_{\Lopn}(\PSh(\uV))
    \end{tikzcd}
  \]
where the bottom horizontal functor is an equivalence by \cref{Segcondcolim} since $\PSh(\uV)$ is compatible with colimits. Moreover, for any \gnsiopd{} $\uO$, the functor
\[ y_{*} \colon \Alg_{\uO}(\uV) \to \Alg_{\uO}(\PSh(\uV))\] given by
composition with $y$ is fully faithful with image those $\uO$-algebras
whose restrictions to $\uO_{[1]}$ factor through $\uV$. It follows
that our equivalence for presheaves identifies the full subcategories
$\Alg_{\Lopn}(\uV)$ and $\Alg^{\Seg}_{\Lopn}(\uV)$ with each other, so
that $j_{n,\Seg}^{*}$ is indeed an equivalence between them.
\end{proof}

\begin{remark}
  The equivalence of \cref{cor:Segcond} can be extended to the case
  where $\uV$ is a non-symmetric \iopd{}, by embedding it in its
  monoidal envelope and checking that the equivalence restricts
  appropriately. The version proved in \cite{nmorita} is actually a
  bit more general than this, since it shows that $\Lopn$ is an
  iterated pushout
  $\Dop_{/[1]} \amalg_{\Dop} \cdots \amalg_{\Dop} \Dop_{/[1]}$ in the
  \icat{} of \gnsiopds{}. This implies that the Segal condition
  actually holds for $\uV$ a \emph{generalized} \nsiopd{}, which we do
  not know how to show using the method of proof used here.
\end{remark}

\section{The double $\infty$-category of algebras and bimodules}\label{sec:double}
In this section we review the definition of the double \icat{} of
algebras and bimodules as a cocartesian fibration over $\Dop$. Here we
set this up in a way that makes it clear how it maps to Lurie's
version, as we will see in the next section.

\begin{notation}
  For any \icat{} $\uC$, we write $\Ar(\uC) := \Fun([1], \uC)$ for its \icat{} of arrows, and $\ev_{i} \colon \Ar(\uC) \to \uC$ for the functors given by evaluation at $i \in [1]$ ($i = 0,1$).
\end{notation}

\begin{observation}
  For any \icat{} $\uC$, the functor $\ev_{0} \colon \Ar(\uC) \to \uC$
  is a cartesian fibration, while $\ev_{1}$ is a cocartesian
  fibration. Moreover, a morphism in $\Ar(\uC)$, that is a commutative
  square
  \[
    \begin{tikzcd}
      x \arrow{d}{f} \arrow{r}{u} & x' \arrow{d}{f'} \\
      y \arrow{r}{v} & y'
    \end{tikzcd}
  \]
  is $\ev_{1}$-cocartesian \IFF{} $u$ is an equivalence and $\ev_{0}$-cartesian \IFF{} $v$ is an equivalence. Thus the commutative triangles
  \[
    \begin{tikzcd}
      \Ar(\uC) \arrow{rr}{(\ev_{0},\ev_{1}) } \arrow{dr}{\ev_{0}} & & \uC \times \uC \arrow{dl}{\pr_{1}} \\
       & \uC,
     \end{tikzcd} \qquad
    \begin{tikzcd}
      \Ar(\uC) \arrow{rr}{(\ev_{0},\ev_{1}) } \arrow{dr}{\ev_{1}} & & \uC \times \uC \arrow{dl}{\pr_{2}} \\
       & \uC,
    \end{tikzcd}     
  \]
  are morphisms of cartesian and cocartesian fibrations, respectively.
\end{observation}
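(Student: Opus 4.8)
The plan is to prove the statement for $\ev_{1}$ and deduce the one for $\ev_{0}$ by duality: since $\Ar(\uC)^{\op} \simeq \Ar(\uC^{\op})$ via the order-reversing isomorphism $[1] \cong [1]^{\op}$, under which the opposite of $\ev_{0}$ corresponds to $\ev_{1}$ on $\Ar(\uC^{\op})$, the cartesian statement for $\ev_{0}$ over $\uC$ is precisely the opposite of the cocartesian statement for $\ev_{1}$ over $\uC^{\op}$. So I would first show that $\ev_{1}$ is a cocartesian fibration whose cocartesian morphisms are the squares with $u$ an equivalence, and then apply this with $\uC$ replaced by $\uC^{\op}$.

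First I would exhibit the cocartesian lifts explicitly. Given $f \colon x \to y$ in $\Ar(\uC)$ and $h \colon y \to y'$ in $\uC$, let $\bar{h} \colon f \to hf$ be the square with top edge $\mathrm{id}_{x}$, left edge $f$, right edge $hf$, and bottom edge $h$; this lies over $h$ under $\ev_{1}$. To see $\bar{h}$ is $\ev_{1}$-cocartesian I would use the mapping-space characterization of cocartesian morphisms together with the standard formula
\[ \mathrm{Map}_{\Ar(\uC)}(f, g) \simeq \mathrm{Map}_{\uC}(x, a) \times_{\mathrm{Map}_{\uC}(x, b)} \mathrm{Map}_{\uC}(y, b) \]
for $g \colon a \to b$, where the maps to $\mathrm{Map}_{\uC}(x,b)$ are postcomposition with $g$ and precomposition with $f$. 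Unwinding the cocartesian criterion for $\bar{h}$, it becomes the assertion that
\[ \mathrm{Map}_{\uC}(x,a) \times_{\mathrm{Map}_{\uC}(x,b)} \mathrm{Map}_{\uC}(y',b) \simeq \Bigl(\mathrm{Map}_{\uC}(x,a) \times_{\mathrm{Map}_{\uC}(x,b)} \mathrm{Map}_{\uC}(y,b)\Bigr) \times_{\mathrm{Map}_{\uC}(y,b)} \mathrm{Map}_{\uC}(y',b), \]
which holds by the pasting lemma for pullbacks, once one observes that precomposition with $hf$ is precomposition with $h$ followed by precomposition with $f$. Since such a lift exists over every $h$, this simultaneously shows that $\ev_{1}$ is a cocartesian fibration.

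Next I would identify the general cocartesian morphisms. The fibre of $\ev_{1}$ over $y'$ is the slice $\uC_{/y'}$, in which a morphism is an equivalence precisely when its underlying morphism in $\uC$ is. Any $\phi \colon f \to f'$ over $h$ factors as the chosen lift $\bar{h} \colon f \to hf$ followed by a morphism $hf \to f'$ in the fibre $\uC_{/y'}$, and since $\bar{h}$ has identity top edge, the underlying morphism of this fibre morphism is exactly the top edge $u$ of $\phi$. As $\phi$ is cocartesian \IFF{} this comparison morphism is an equivalence, we conclude that $\phi$ is $\ev_{1}$-cocartesian \IFF{} $u$ is an equivalence; dually, $\phi$ is $\ev_{0}$-cartesian \IFF{} its bottom edge $v$ is an equivalence.

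Finally, the projections $\pr_{1}, \pr_{2} \colon \uC \times \uC \to \uC$ are (co)cartesian fibrations whose (co)cartesian morphisms are the pairs whose other component is an equivalence. The functor $(\ev_{0}, \ev_{1})$ sends an $\ev_{1}$-cocartesian square, i.e.\ one with $u$ invertible, to the pair $(u, h)$, which is $\pr_{2}$-cocartesian since $u$ is invertible; hence $(\ev_{0}, \ev_{1})$ preserves cocartesian morphisms and is a morphism of cocartesian fibrations over $\uC$, and dually for the first triangle. I expect the main obstacle to be the cocartesianness verification in the second step: everything else is formal, but correctly matching the cocartesian criterion for $\bar{h}$ to an instance of the pasting lemma requires careful bookkeeping of the pre- and postcomposition maps appearing in the mapping-space formula.
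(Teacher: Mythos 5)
Your proof is correct: the explicit lifts with identity top edge, the verification of cocartesianness via the mapping-space criterion together with the pullback formula for mapping spaces in $\Ar(\uC)$ and the pasting lemma, the identification of general cocartesian edges by factoring through the chosen lift, and the reduction of the $\ev_{0}$ statement to the $\ev_{1}$ statement by the duality $\Ar(\uC)^{\op} \simeq \Ar(\uC^{\op})$ are all sound. The paper records this as a standard observation without proof, and your argument is precisely the standard one it implicitly relies on, so there is nothing to compare beyond noting that you have filled in the details correctly.
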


\begin{propn}\label{propn:Arsimpopdouble}
  The cocartesian fibration $\ev_{0}^{\op} \colon \Ar(\simp)^{\op} \to \Dop$ is a double \icat{}.
\end{propn}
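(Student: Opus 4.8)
The plan is to recognise a double \icat{} as a category object in $\CatI$, i.e.\ as a cocartesian fibration over $\Dop$ whose straightening $\Dop \to \CatI$ satisfies the Segal condition, and then to verify that condition by an elementary combinatorial computation. First I would record that $\ev_{0}^{\op}$ really is a cocartesian fibration: by the observation above $\ev_{0} \colon \Ar(\simp) \to \simp$ is a cartesian fibration, so its opposite is cocartesian over $\Dop$. It then remains to identify the fibres together with the Segal maps. The fibre of $\ev_{0}$ over $[n]$ is the coslice $\simp_{[n]/}$ of morphisms out of $[n]$, so the fibre of $\ev_{0}^{\op}$ over $[n]$ is $(\simp_{[n]/})^{\op} \cong \Dop_{/[n]}$. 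Equivalently, using the isomorphism $[1] \cong [1]^{\op}$ to identify $\Ar(\simp)^{\op} \simeq \Ar(\Dop)$, the functor $\ev_{0}^{\op}$ corresponds to the target evaluation $\ev_{1} \colon \Ar(\Dop) \to \Dop$, whose straightening is the slice functor $[n] \mapsto \Dop_{/[n]}$ with cocartesian pushforwards given by postcomposition; the Segal maps are then postcomposition with the inert morphisms $\rho_{i} \colon [n] \to [1]$ in $\Dop$ (the opposites of the edges $\{i-1,i\} \hookrightarrow [n]$).

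It thus remains to prove the Segal condition
\[
  \Dop_{/[n]} \isoto \Dop_{/[1]} \times_{\Dop_{/[0]}} \cdots \times_{\Dop_{/[0]}} \Dop_{/[1]}.
\]
Since $(-)^{\op}$ is an autoequivalence of $\CatI$ and hence preserves limits, this is equivalent to the assertion that the edge restrictions exhibit $\simp_{[n]/}$ as the iterated pullback $\simp_{[1]/} \times_{\simp_{[0]/}} \cdots \times_{\simp_{[0]/}} \simp_{[1]/}$. I would verify this directly: an object of the right-hand side is a tuple of edges $\phi_{i} \colon [1] \to [k_{i}]$ equipped with identifications of consecutive endpoints in $\simp_{[0]/}$, and because $\simp_{[0]/}$ has no nonidentity isomorphisms these identifications force all the targets $[k_{i}]$ to coincide with a common $[k]$ and the endpoints to agree on the nose; such data is exactly a monotone map $[n] \to [k]$, i.e.\ an object of $\simp_{[n]/}$. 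The same bookkeeping applied to morphisms (which are commuting triangles under the respective sources) shows the comparison functor is fully faithful, hence an equivalence.

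The hard part will be the interplay of the various opposites: one must pin down that the cocartesian pushforwards of $\ev_{0}^{\op}$ are precisely the (post)composition functors claimed, so that the Segal maps of the straightening are identified with the spine/edge restrictions used in the combinatorial step. Once this identification is in place, the Segal condition reduces to the elementary fact that a monotone map is freely determined by its restrictions to consecutive pairs of vertices, and the rigidity (absence of nontrivial isomorphisms) of the slice categories $\simp_{[i]/}$ guarantees that the relevant pullbacks are computed strictly, with no homotopical correction.
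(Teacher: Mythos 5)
Your proposal is correct and follows essentially the same route as the paper: reduce to the Segal condition for the fibres $\simp_{[n]/}$ and verify it by elementary combinatorics. The only cosmetic difference is that the paper packages your hands-on bookkeeping (including the rigidity of $\simp_{[0]/}$, which makes the pullbacks strict) as the single observation that $[n]$ is the pushout $[1]\amalg_{[0]}\cdots\amalg_{[0]}[1]$ in $\simp$, so that coslices over it decompose as the corresponding iterated fibre product.
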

\begin{proof}
  To see that $\ev_{0}^{\op}$ is a double \icat{}, we must show that the functor
  \[ \simp_{[n]/} \to \simp_{[1]/} \times_{\simp_{[0]/}} \cdots
    \times_{\simp_{[0]/}} \simp_{[1]/}, \] given by composition with
  the inert maps $[0],[1] \hookrightarrow [n]$, is an
  equivalence. This follows from the elementary observation that $[n]$
  is the pushout \[[1] \amalg_{[0]} \cdots \amalg_{[0]} [1]\] in
  $\simp$.
\end{proof}

\begin{cor}\label{pbarsimpdouble}
  For any functor of \icats{} $\uc{K}\to \Dop$, the pullback $\uc{K} \times_{\Dop} \Ar(\simp)^{\op}$ along $\ev_{1}^{\op}$ is a double \icat{} via the functor
  \[ \uc{K} \times_{\Dop} \Ar(\simp)^{\op} \to \Ar(\simp)^{\op} \xto{\ev_{0}^{\op}} \Dop.\]
\end{cor}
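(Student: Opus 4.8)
The plan is to verify the two defining properties of a double \icat{}: that the structure functor $P := \ev_0^{\op}\circ\pr_2 \colon \uc{K}\times_{\Dop}\Ar(\simp)^{\op} \to \Dop$ is a cocartesian fibration, and that its fibres satisfy the Segal condition. The single input I would extract from the preceding observation is that $\ev_1^{\op}$ sends $\ev_0^{\op}$-cocartesian morphisms of $\Ar(\simp)^{\op}$ to equivalences: dually, a morphism of $\Ar(\simp)$ is $\ev_0$-cartesian exactly when its image $v$ under $\ev_1$ is an equivalence, so $\ev_1$ carries $\ev_0$-cartesian morphisms to equivalences, and passing to opposite categories gives the claim. (Equivalently, the triangle exhibiting $(\ev_0,\ev_1)$ as a morphism of cartesian fibrations over $\pr_1$ dualises to a morphism of cocartesian fibrations, and $\pr_1$-cocartesian morphisms of $\Dop\times\Dop$ are exactly those inverted by $\pr_2 = \ev_1^{\op}\circ(\ev_0^{\op},\ev_1^{\op})$.)

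First I would establish that $P$ is a cocartesian fibration. Given a point $(k,e)$ of the pullback (so $g(k) \simeq \ev_1^{\op}(e)$, writing $g\colon\uc{K}\to\Dop$ for the given functor) and a morphism $\beta\colon\ev_0^{\op}(e)\to[m]$ in $\Dop$, I would take the $\ev_0^{\op}$-cocartesian lift $\bar\beta\colon e\to e'$. Since $\ev_1^{\op}(\bar\beta)$ is an equivalence, $(k,e')$ again lies in the pullback and $(\mathrm{id}_{k},\bar\beta)$ defines a lift of $\beta$; a routine check using the pullback description of mapping spaces shows this lift is $P$-cocartesian. Factoring an arbitrary morphism of the pullback through such a canonical lift, and using that equivalences in a fibre $\uc{K}\times_{\Dop}(\ev_0^{\op})^{-1}([m])$ are detected componentwise, identifies the $P$-cocartesian morphisms as precisely the pairs $(\gamma,\delta)$ with $\gamma$ an equivalence in $\uc{K}$ and $\delta$ an $\ev_0^{\op}$-cocartesian morphism of $\Ar(\simp)^{\op}$.

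With the cocartesian structure in hand, I would compute the fibres: taking the fibre of $P$ over $[n]$ commutes with the pullback, giving $P^{-1}([n]) \simeq \uc{K}\times_{\Dop}(\ev_0^{\op})^{-1}([n])$, where $(\ev_0^{\op})^{-1}([n])$ maps to $\Dop$ by the restriction of $\ev_1^{\op}$. By the previous paragraph the cocartesian pushforward functors $P^{-1}([1])\to P^{-1}([0])$ are obtained by applying $\uc{K}\times_{\Dop}(-)$ to the corresponding pushforwards of $\Ar(\simp)^{\op}$; and since $\ev_1^{\op}$ inverts cocartesian morphisms, these pushforwards, and hence the whole Segal diagram of $\Ar(\simp)^{\op}$, live over $\Dop$ via $\ev_1^{\op}$. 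The base-change functor $\uc{K}\times_{\Dop}(-)$ is a right adjoint and so preserves the finite limits computing the Segal fibre products; applying it to the Segal equivalences of $\Ar(\simp)^{\op}$ supplied by \cref{propn:Arsimpopdouble} therefore yields the Segal equivalences for $P$, completing the proof.

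The main obstacle is the cocartesian-fibration step: everything hinges on the compatibility that $\ev_1^{\op}$ inverts $\ev_0^{\op}$-cocartesian morphisms, which is exactly what lets cocartesian lifts be transported into the pullback without disturbing the $\uc{K}$-coordinate. Once that is secured and the cocartesian morphisms are identified, the Segal condition is formal, since it is inherited from $\Ar(\simp)^{\op}$ through the limit-preserving base-change functor.
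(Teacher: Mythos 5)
Your argument is correct and rests on exactly the same key input as the paper's proof, namely that $\ev_{1}^{\op}$ inverts $\ev_{0}^{\op}$-cocartesian morphisms (equivalently, that $(\ev_{1}^{\op},\ev_{0}^{\op})$ is a morphism of cocartesian fibrations over $\Dop$). The paper simply packages the remaining formal checks by exhibiting the square \cref{eq:ardoppbsq} as a pullback of double \icats{}, whereas you carry out the cocartesian-lifting and Segal-condition verifications by hand; the content is the same.
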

\begin{proof}
  The projection $\uc{C} \times \Dop \to \Dop$ is a double \icat{} for any \icat{} $\uC$, so we have a pullback square
  \begin{equation}
    \label{eq:ardoppbsq}
    \begin{tikzcd}
      \uc{K} \times_{\Dop} \Ar(\simp)^{\op} \arrow{r}{\pr_{2}} \arrow{d}[swap]{(\pr_{1}, \ev_{0}^{\op}\pr_{2})} & \Ar(\simp)^{\op}  \arrow{d}{(\ev_{1}^{\op}, \ev_{0}^{\op})}\\
      \uc{K} \times \Dop \arrow{r}{\phi \times \id} & \Dop \times \Dop
    \end{tikzcd}
  \end{equation}
  of double \icats{}.
\end{proof}

\begin{propn}\label{propn:omor'eq}
  Let $\oMor'(\uV) \to \Dop$ be the cocartesian fibration for the functor $\Fun_{/\Dop}(\Dop_{/\blank}, \uV)$. Then there is a natural equivalence
  \[ \Fun_{/\Dop}(\uc{K}, \oMor'(\uV)) \simeq \Fun_{\Dop}(\uc{K} \times_{\Dop} \Ar(\simp)^{\op}, \uV).\]
  In other words, $\oMor'(\uV)$ is $\ev^{\op}_{1,*}\ev^{\op,*}_{1}\uV$.
\end{propn}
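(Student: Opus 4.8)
The plan is to prove the displayed equivalence by reducing it, through a chain of standard adjunctions, to the single identification $\oMor'(\uV) \simeq \ev_{1,*}^{\op}\ev_{0}^{\op,*}\uV$ --- the ``in other words'' of the statement. The observation that makes this work is that $\Dopn$ is precisely the fibre of the cartesian fibration $\ev_{1}^{\op} \colon \Ar(\simp)^{\op} \to \Dop$ over $[n]$, and that the restriction of $\ev_{0}^{\op}$ to this fibre is the forgetful functor $\Dopn \to \Dop$ along which $\Fun_{/\Dop}(\Dopn, \uV)$ --- the value of the functor classified by $\oMor'(\uV)$ --- is formed. Moreover, the pullback $\uc{K} \times_{\Dop} \Ar(\simp)^{\op}$ along $\ev_{1}^{\op}$ of \cref{pbarsimpdouble}, with its structure map $\ev_{0}^{\op}$ to $\Dop$, is exactly the base change $\ev_{1}^{\op,*}\uc{K}$.

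Since $\ev_{1}^{\op}$ is a cartesian fibration (as recorded before \cref{propn:Arsimpopdouble}) it is exponentiable, so base change $\ev_{1}^{\op,*}$ has a right adjoint $\ev_{1,*}^{\op}$ whose universal property holds already at the level of functor \icats{}. Combining this with the universal property of the pullback $\ev_{0}^{\op,*}\uV = \Ar(\simp)^{\op} \times_{\Dop} \uV$ produces, naturally in $\uc{K}$, the chain
\[ \Fun_{/\Dop}(\uc{K} \times_{\Dop} \Ar(\simp)^{\op}, \uV) \simeq \Fun_{/\Ar(\simp)^{\op}}(\ev_{1}^{\op,*}\uc{K},\, \ev_{0}^{\op,*}\uV) \simeq \Fun_{/\Dop}(\uc{K},\, \ev_{1,*}^{\op}\ev_{0}^{\op,*}\uV). \]
Granting the identification $\oMor'(\uV) \simeq \ev_{1,*}^{\op}\ev_{0}^{\op,*}\uV$, the right-hand term becomes $\Fun_{/\Dop}(\uc{K}, \oMor'(\uV))$ and the whole chain is the asserted equivalence; so it suffices to prove this identification.

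That identification is the step I expect to be the main obstacle, because a dependent product need not a priori be a cocartesian fibration and its fibres must be computed correctly. I would first pin down the fibres by substituting the inclusion $\{[n]\} \hookrightarrow \Dop$ for $\uc{K}$: base change along $\ev_{1}^{\op}$ sends $\{[n]\}$ to its genuine fibre $\Dopn$, so the chain identifies the fibre of $\ev_{1,*}^{\op}\ev_{0}^{\op,*}\uV$ over $[n]$ with $\Fun_{/\Dop}(\Dopn, \uV)$, matching the fibres of $\oMor'(\uV)$. The remaining work is to check that $\ev_{1,*}^{\op}\ev_{0}^{\op,*}\uV$ is a cocartesian fibration over $\Dop$ --- which should follow from $\ev_{1}^{\op}$ being a cartesian fibration together with $\ev_{0}^{\op,*}\uV \to \Ar(\simp)^{\op}$ being cocartesian --- and that its cocartesian pushforward along a morphism $[n] \to [m]$ in $\Dop$ is given by restriction along the induced functor $\Dop_{/[m]} \to \Dopn$, so that it classifies the same functor $[n] \mapsto \Fun_{/\Dop}(\Dopn, \uV)$ as $\oMor'(\uV)$.
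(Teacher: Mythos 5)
Your starting point --- that $\Dop_{/\blank}$ is classified by the cartesian fibration $\ev_{1}^{\op} \colon \Ar(\simp)^{\op} \to \Dop$, whose fibre over $[n]$ is $\Dopn$ with $\ev_{0}^{\op}$ restricting to its structure map to $\Dop$ --- is exactly the paper's; the difference is that the paper then simply quotes \cite{freepres}*{Proposition 7.3}, which packages everything you go on to do by hand (existence of $\ev_{1,*}^{\op}$, the universal property at the level of functor \icats{}, and the identification of the resulting cocartesian fibration). Your chain of adjunctions and your fibre computation are correct, and you have rightly written $\ev_{0}^{\op,*}$ where the statement's ``in other words'' has $\ev_{1}^{\op,*}$. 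The only thin spot is your final step: checking that $\ev_{1,*}^{\op}\ev_{0}^{\op,*}\uV$ is a cocartesian fibration with fibres $\Fun_{/\Dop}(\Dopn,\uV)$ and that its cocartesian transports are the restriction functors does not by itself produce an equivalence of its straightening with the functor $\Fun_{/\Dop}(\Dop_{/\blank},\uV)$ --- agreement on objects and on individual morphisms is not an identification of functors to $\CatI$, and supplying the coherent comparison is precisely what the cited proposition does. In practice one either invokes that general result or takes $\ev_{1,*}^{\op}\ev_{0}^{\op,*}\uV$ as the \emph{definition} of $\oMor'(\uV)$ (as the paper does for the analogous $\oMorL(\uc{C})$); with that caveat, your argument is sound and amounts to an inlined proof of the special case of \cite{freepres}*{Proposition 7.3} that the paper cites.
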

\begin{proof}
  The functor $\Dop_{/\blank} \colon \simp \to \Cat$ is classified by
  the cartesian fibration \[\ev_{1}^{\op}\colon \Ar(\simp)^{\op} \to \Dop.\] This is therefore a special case of
  \cite{freepres}*{Proposition 7.3}.
\end{proof}

\begin{defn}
  Let $\uV$ be a monoidal \icat{}. We define $\oMor(\uV)$ to be the full subcategory of $\oMor'(\uV)$ spanned by the $\Dopn$-algebras for all $n$, so that the restricted projection $\oMor(\uV) \to \Dop$ is the cartesian fibration for the functor $\Alg_{\Dop_{/\blank}}(\uV)$.
\end{defn}

\begin{propn}\label{propn:oMorftrisalg}
  For any \icat{} $\uc{K}$ over $\Dop$, the equivalence of \cref{propn:omor'eq} restricts to a natural equivalence
  \[ \Fun_{/\Dop}(\uc{K}, \oMor(\uV)) \simeq \Alg_{\uc{K} \times_{\Dop} \Ar(\simp)^{\op}}(\uV).\]
\end{propn}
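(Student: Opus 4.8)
The plan is to prove that the natural equivalence of \cref{propn:omor'eq} restricts to the asserted one between full subcategories on the two sides. By the definition of $\oMor(\uV)$, the inclusion $\oMor(\uV) \subseteq \oMor'(\uV)$ is the full subcategory spanned by those objects which, viewed as functors $\Dopn \to \uV$ over $\Dop$, are algebras; consequently $\Fun_{/\Dop}(\uc{K}, \oMor(\uV))$ is the full subcategory of $\Fun_{/\Dop}(\uc{K}, \oMor'(\uV))$ on the sections $s$ with $s(k) \in \oMor(\uV)$ for every object $k \in \uc{K}$. On the other side, writing $\uc{P} := \uc{K} \times_{\Dop} \Ar(\simp)^{\op}$, the inclusion $\Alg_{\uc{P}}(\uV) \subseteq \Fun_{\Dop}(\uc{P}, \uV)$ is by definition the full subcategory of functors over $\Dop$ that preserve cocartesian morphisms lying over inert maps. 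Since both are full subcategories and \cref{propn:omor'eq} is an equivalence, it suffices to match them objectwise: a section $s$ lands in $\oMor(\uV)$ \IFF{} the corresponding functor $\tilde{s} \colon \uc{P} \to \uV$ is a $\uc{P}$-algebra. Naturality in $\uc{K}$ is then automatic, as sections into a full subcategory and algebras for a \gnsiopd{} are both stable under the relevant restriction functors.

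I would first reduce the left-hand condition to the fibres. Applying the naturality of \cref{propn:omor'eq} to the inclusion $\{k\} \hookrightarrow \uc{K}$ of an object $k$ lying over $[n] \in \Dop$, and noting that the pullback of $\Ar(\simp)^{\op}$ along $\ev_{1}^{\op}$ over the point $[n]$ is exactly the fibre $\{k\} \times_{\Dop} \Ar(\simp)^{\op} \cong \Dopn$, the value $s(k) \in \Fun_{/\Dop}(\Dopn, \uV)$ gets identified with the restriction $\tilde{s}|_{\Dopn}$ of $\tilde{s}$ along the fibre inclusion $\Dopn \hookrightarrow \uc{P}$. Thus $s$ factors through $\oMor(\uV)$ \IFF{} for every object $k$ of $\uc{K}$, over $[n]$ say, the restriction $\tilde{s}|_{\Dopn}$ is a $\Dopn$-algebra.

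The heart of the matter, and the step I expect to be the main obstacle, is then to show that $\tilde{s}$ is a $\uc{P}$-algebra \IFF{} each such fibre restriction is an algebra; this amounts to identifying the inert cocartesian morphisms of $\uc{P}$. The key input is the transversality of the two fibration structures on $\Ar(\simp)$ recorded in the observation preceding \cref{propn:Arsimpopdouble}: a morphism of $\Ar(\simp)$ is $\ev_{0}$-cartesian precisely when its $\ev_{1}$-leg is an equivalence, so dually every $\ev_{0}^{\op}$-cocartesian morphism of $\Ar(\simp)^{\op}$ projects under $\ev_{1}^{\op}$ to an equivalence. Since $\uc{P}$ is formed by pullback along $\ev_{1}^{\op}$ and carries its \gnsiopd{} structure through $\ev_{0}^{\op}\pr_{2}$, one checks that the cocartesian lift in $\uc{P}$ of an inert map, computed at an object $(k, g)$, has identity $\uc{K}$-component and agrees in its $\Ar(\simp)^{\op}$-component with the $\ev_{0}^{\op}$-cocartesian lift. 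Hence every cocartesian morphism of $\uc{P} \to \Dop$ lies in a single fibre of $\pr_{1}$, namely some $\Dopn$, where it coincides with the corresponding cocartesian morphism of the \gnsiopd{} $\Dopn \to \Dop$ obtained from \cref{pbarsimpdouble}; conversely every such fibrewise cocartesian morphism remains cocartesian in $\uc{P}$.

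Putting these together, $\tilde{s}$ preserves the inert cocartesian morphisms of $\uc{P}$ \IFF{} for each object $k$ of $\uc{K}$ the restriction $\tilde{s}|_{\Dopn}$ preserves the inert cocartesian morphisms of $\Dopn$, that is, is a $\Dopn$-algebra --- which is exactly the fibrewise condition characterizing when $s$ factors through $\oMor(\uV)$. Therefore \cref{propn:omor'eq} restricts to the claimed equivalence, naturally in $\uc{K}$. The only genuinely nontrivial ingredient is the confinement of the inert cocartesian morphisms of $\uc{P}$ to the fibres of $\pr_{1}$, which turns the global algebra condition into the fibrewise one; the remainder is bookkeeping with the naturality of \cref{propn:omor'eq} and with the fact that both sides are full subcategories.
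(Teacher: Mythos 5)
Your proposal is correct and follows essentially the same route as the paper: both reduce the membership condition for $\oMor(\uV)$ to a fibrewise algebra condition and then identify the inert morphisms of $\uc{K} \times_{\Dop} \Ar(\simp)^{\op}$ as exactly those whose $\uc{K}$-component is an equivalence, hence those coming from the fibres $\Dopn$. The one cosmetic point is that the $\uc{K}$-component of a cocartesian lift should be called an equivalence rather than an identity, which is how the paper phrases it.
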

\begin{proof}
  By definition, a functor $\uc{K} \to \oMor'(\uV)$ factors through $\oMor(\uV)$ precisely when the image of each object of $\uc{K}$ lies in $\oMor(\uV)$. Translating this through the equivalence of \cref{propn:omor'eq}, it says that functors $\uc{K} \to \oMor(\uV)$ correspond to functors $\uc{K} \times_{\Dop} \Ar(\simp)^{\op} \to \uc{V}$ such that for every object $x \in \uc{K}$ over $[n]$ in $\Dop$, the restriction to $\Dopn \simeq \{x\} \times_{\Dop} \Ar(\simp)^{\op} \to \uV$ preserves inert morphisms. We therefore want to show that this condition is equivalent to preserving the inert morphisms in $\uc{K} \times_{\Dop} \Ar(\simp)^{\op}$. But from the pullback square \cref{eq:ardoppbsq} we see that a morphism in $\uc{K} \times_{\Dop} \Ar(\simp)^{\op}$ is cocartesian precisely when its image in $\uc{K}$ is an equivalence, so the inert morphisms are indeed precisely those that are the images of inert morphisms in $\Dopn$ for some object of $\uc{K}$.
\end{proof}

\begin{defn}
  Let $\Mor(\uV)$ be the full subcategory of $\oMor(\uV)$ spanned by the composite $\Dopn$-algebras for all $n$.
\end{defn}

We want to show that $\Mor(\uV) \to \Dop$ is a cocartesian fibration, with the cocartesian morphisms inherited from $\oMor(\uV)$. In other words, we want to prove that for any morphism $\phi \colon [n] \to [m]$ in $\simp$, the functor $\phi^{*} \colon \Alg_{\Dop_{/[m]}}(\uV) \to \Alg_{\Dopn}(\uV)$ preserves composite algebras. To see this, we first introduce a family of auxiliary objects:
\begin{defn}
  Given a morphism $\phi \colon [n] \to [m]$ in $\simp$, we say a morphism $\alpha \colon [k] \to [m]$ is \emph{$\phi$-cellular} if the following conditions hold:
  \begin{itemize}
  \item if $\alpha(i) < \phi(0)$ then $\alpha(i+1) \leq \alpha(i) + 1$,
  \item if $\phi(j) \leq \alpha(i) < \phi(j+1)$ then $\alpha(i+1) \leq \phi(j+1)$.
  \item if $\alpha(i)> \phi(n)$ then  $\alpha(i+1) \leq \alpha(i) + 1$.
  \end{itemize}
  We write $\bbL_{/[m]}[\phi]$ for the full subcategory of
  $\simp_{/[m]}$ spanned by the $\phi$-cellular morphisms, and denote
  the inclusion by
  $i_{\phi} \colon \bbL_{/[m]}[\phi] \hookrightarrow
  \simp_{/[m]}$. Then $\bbL_{/[m]}[\phi]$ contains $\bbL_{/[m]}$ ---
  we write
  $j_{\phi} \colon \bbL_{/[m]} \hookrightarrow \bbL_{/[m]}[\phi]$ for
  the inclusion --- and the functor
  $\phi \colon \simp_{/[n]} \to \simp_{/[m]}$ restricts to a functor
  $\phi_{\bbL} \colon \bbL_{/[n]} \to \bbL_{/[m]}[\phi]$. In total, we
  have the commutative diagram
  \begin{equation}
    \label{eq:phicellsq}
    \begin{tikzcd}
      \bbL_{/[n]} \arrow{r}{\phi_{\bbL}} \arrow[hookrightarrow]{d}[swap]{i_{n}} & \bbL_{/[m]}[\phi] \arrow[hookrightarrow]{d}{i_{\phi}} & \bbL_{/[m]} \arrow[hookrightarrow]{l}[swap]{j_{\phi}} \arrow[hookrightarrow]{dl} \\
      \simp_{/[n]} \arrow{r}{\phi} & \simp_{/[m]}.
    \end{tikzcd}
  \end{equation}
\end{defn}

\begin{observation}\label{obs:phicellsurj}
  If $\phi \colon [n] \to [m]$ is surjective, then $\bbL_{/[m]}[\phi]$ is the same as $\bbL_{/[m]}$. More generally, if $\phi$ decomposes as $[n] \xto{\alpha} [k] \xto{\beta} [m]$ where $\alpha$ is surjective, then $\bbL_{/[m]}[\phi]$ is the same as $\bbL_{/[m]}[\beta]$.
\end{observation}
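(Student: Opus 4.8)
The plan is to verify this directly from the definitions, by comparing the three conditions defining $\phi$-cellularity and $\beta$-cellularity on an arbitrary test object $\gamma \colon [l] \to [m]$ of $\simp_{/[m]}$ (I rename the test morphism $\gamma$ to avoid a clash with the surjection $\alpha$ appearing in the decomposition $\phi = \beta\alpha$). The first point to record is that an order-preserving surjection $\alpha \colon [n] \to [k]$ satisfies $\alpha(0) = 0$, $\alpha(n) = k$, and $\alpha(i+1) \leq \alpha(i)+1$ for all $i$, since no value may be skipped. Consequently $\phi(0) = \beta(\alpha(0)) = \beta(0)$ and $\phi(n) = \beta(\alpha(n)) = \beta(k)$, so the first and third conditions defining $\phi$-cellularity---those governing the ranges $\gamma(i) < \phi(0)$ and $\gamma(i) > \phi(n)$---agree verbatim with the corresponding conditions for $\beta$-cellularity.

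It then remains to compare the middle conditions, and this is the step that requires the most care. Here I would observe that the consecutive pairs $(\phi(j'), \phi(j'+1))$, for $0 \leq j' < n$, fall into two types according to whether $\alpha(j'+1) = \alpha(j')$ or $\alpha(j'+1) = \alpha(j')+1$. In the first case $\phi(j') = \phi(j'+1)$, so the hypothesis $\phi(j') \leq \gamma(i) < \phi(j'+1)$ is never satisfied and this pair imposes no constraint. In the second case $(\phi(j'), \phi(j'+1)) = (\beta(j), \beta(j+1))$ with $j = \alpha(j')$, and because $\alpha$ is a surjection each $j \in \{0,\ldots,k-1\}$ arises from exactly one such transition. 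Thus the nondegenerate pairs for $\phi$ are precisely the pairs $(\beta(j), \beta(j+1))$, and the middle condition for $\phi$-cellularity coincides with that for $\beta$-cellularity. Combining this with the agreement of the outer two conditions yields $\bbL_{/[m]}[\phi] = \bbL_{/[m]}[\beta]$.

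Finally, the first assertion is the special case $\beta = \id_{[m]}$: if $\phi$ is itself surjective we take $\alpha = \phi$, so it suffices to check directly that $\bbL_{/[m]}[\id_{[m]}] = \bbL_{/[m]}$. For $\beta = \id_{[m]}$ both outer conditions are vacuous and the middle condition, with $\beta(j) = j$, reads ``$\gamma(i) = j \implies \gamma(i+1) \leq j+1$'', which is exactly the cellularity condition of \cref{defn:cellular}. The whole argument is elementary combinatorics; the only genuine bookkeeping is in the middle condition, where one must confirm that the degenerate pairs $\phi(j') = \phi(j'+1)$ contribute nothing and that surjectivity of $\alpha$ produces each pair $(\beta(j), \beta(j+1))$ exactly once.
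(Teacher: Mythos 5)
Your verification is correct and is exactly the elementary check the paper leaves implicit (the statement is an unproved Observation): the key points — that a surjection $[n]\to[k]$ starts at $0$, ends at $k$, and increments by at most $1$, so the outer conditions transfer verbatim and the degenerate pairs $\phi(j')=\phi(j'+1)$ impose vacuous constraints while the nondegenerate ones enumerate the pairs $(\beta(j),\beta(j+1))$ — are all handled properly. The reduction of the first assertion to $\beta=\id_{[m]}$ and the identification of $\id$-cellularity with cellularity are likewise correct.
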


\begin{lemma}
  For any map $\phi \colon [n] \to [m]$, the forgetful functor to $\Dop$ makes $\Lop_{/[m]}[\phi]$ a \gnsiopd{}, and the inclusions
  \[ \Lop_{/[m]} \hookrightarrow \Lop_{/[m]}[\phi] \hookrightarrow
    \Dop_{/[m]}\] are extendable morphisms of \gnsiopds{}.
\end{lemma}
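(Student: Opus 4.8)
The plan is to reduce all three assertions to a single combinatorial observation: $\phi$-cellularity is an \emph{edgewise} condition. Indeed, each of the three defining clauses of $\phi$-cellularity constrains only a single consecutive pair $(\alpha(i),\alpha(i+1))$ of a map $\alpha \colon [k] \to [m]$, so $\alpha$ is $\phi$-cellular \IFF{} its restriction $\alpha|_{\{i,i+1\}}$ along each edge inclusion $\{i,i+1\} \hookrightarrow [k]$ is $\phi$-cellular as a map $[1] \to [m]$. The same holds for ordinary cellularity, where the single clause $\alpha(i+1) \le \alpha(i)+1$ is manifestly edgewise (and implies each $\phi$-cellular clause, which is why $\Lop_{/[m]} \subseteq \Lop_{/[m]}[\phi]$). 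Every claim of the lemma will follow formally from this principle, using that $\Dop_{/[m]}$ is already a double \icat{} by \cref{pbarsimpdouble}.

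For the \gnsiopd{} structure it suffices to produce cocartesian lifts of the inert morphisms of $\Dop$ and to verify the Segal condition. An inert morphism of $\Dop$ is the opposite of an interval inclusion, and its cocartesian lift in $\Dop_{/[m]}$ at an object $\alpha$ is the restriction of $\alpha$ to the corresponding subinterval. Since the edges of this restriction are among the edges of $\alpha$, the edgewise principle shows that $\Lop_{/[m]}[\phi]$ is closed under these cocartesian morphisms, so it inherits inert-cocartesian lifts. For the Segal condition, the fibre of $\Lop_{/[m]}[\phi]$ over $[k]$ is the set of $\phi$-cellular maps $[k] \to [m]$; by the edgewise principle this is the restriction, to $\phi$-cellular edges, of the Segal decomposition of the fibre of $\Dop_{/[m]}$, and hence is again an equivalence. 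The inclusions $\Lop_{/[m]} \hookrightarrow \Lop_{/[m]}[\phi] \hookrightarrow \Dop_{/[m]}$ are then inclusions of full subcategories over $\Dop$ all carrying the same inert-cocartesian morphisms, namely the restrictions along subintervals inherited from $\Dop_{/[m]}$; they therefore preserve inert-cocartesian morphisms and are morphisms of \gnsiopds{}.

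It remains to prove extendability, which I expect to be the main point. Recall that such an inclusion $f$ is extendable when, for every object $X$ over $[k] \in \Dop$ with inert images $X_{1},\dots,X_{k}$ over the edges, the \icat{} of active maps into $X$ from the source decomposes as the product over $i$ of the \icats{} of active maps into $X_{i}$. Unwinding the definition in $\Dop_{/[m]} = (\simp_{/[m]})^{\op}$, an active map into $X = (\alpha \colon [a] \to [m])$ from a ($\phi$-)cellular object is precisely a \emph{refinement} of $\alpha$: a map $\gamma \colon [c] \to [m]$ obtained by interpolating a weakly monotone chain of values inside each edge $[\alpha(i),\alpha(i+1)]$, the choices on distinct edges being independent. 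By the edgewise principle such a refinement is ($\phi$-)cellular \IFF{} its restriction to each edge is, so the poset of ($\phi$-)cellular active maps into $X$ is exactly the product over the edges of $X$ of the posets of ($\phi$-)cellular refinements of the individual edges. This is the decomposition required by extendability.

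The main obstacle is precisely this last identification: one must match the \icats{} of active maps with the edgewise refinement posets, including on morphisms, and check that the resulting decomposition agrees with the one in the definition of extendability. Since every object in sight lies in the $1$-category $\simp_{/[m]}$, this is a direct poset computation rather than a homotopical one, and I anticipate no essential difficulty once the edgewise principle is in hand; the only care needed is in translating ``active morphism of $\Dop_{/[m]}$'' into the refinement picture and tracking the cocartesian edge-projections through it.
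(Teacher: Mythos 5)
Your proposal is correct, and its first half is essentially the paper's argument: the observation that ($\phi$-)cellularity is detected on inert restrictions to $[1]$ is exactly what the paper feeds into \cite{nmorita}*{Lemma 4.15} to obtain the \gnsiopd{} structure (you simply spell out the inert-cocartesian lifts and the Segal condition that the cited lemma packages). Where you genuinely diverge is on extendability. The paper disposes of this in one line: both inclusions restrict to the \emph{identity} of the set $\Hom_{\simp}([0],[m])$ on the fibre over $[0]$, and any morphism of \gnsiopds{} that is an equivalence of \igpds{} over $[0]$ is automatically extendable by \cite{patterns2}*{Example 9.6} --- the mechanism, which the paper spells out later for $u_{\phi}$, is that by \cite{patterns1}*{Proposition 9.15} the active slice over any object decomposes as a fibre product of the edge slices over the vertex slices, so cofinality of the comparison to the plain product reduces to contractibility of the slices over objects of the $[0]$-fibre. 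You instead verify the product decomposition by hand via the refinement picture, which does work: your implicit use of the fact that the vertex values of a refinement are forced by $X$ is precisely the contractibility of the vertex slices that the general criterion isolates. Your route is more elementary and self-contained but incurs the bookkeeping you flag at the end; the paper's route makes the whole extendability claim a formal consequence of one glance at the fibres over $[0]$. If you keep the hands-on argument, the one point to make explicit is that the decomposition appearing in the definition of extendability is a priori a fibre product over the vertex slices rather than a plain product, so that identifying the active slice with the plain product of edge-refinement posets really does establish the required cofinality.
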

\begin{proof}
  We can apply \cite{nmorita}*{Lemma 4.15} to conclude that
  $\Lop_{/[m]}[\phi]$ is a \gnsiopd{}, since a morphism $[k] \to [m]$
  is $\phi$-cellular \IFF{} all its inert restrictions to $[1]$ are
  $\phi$-cellular. Both inclusions restrict on the fibre over $[0]$ to
  the identity of the set $\Hom_{\simp}([0],[m])$, and so are
  extendable since this is true for any morphism of \gnsiopds{} that is an equivalence of \igpds{} over $[0]$ by \cite{patterns2}*{Example 9.6}.
\end{proof}

\begin{propn}\label{propn:phicelllke}
  Let $\uV$ be a monoidal \icat{} compatible with simplicial colimits. Then the commutative diagram \cref{eq:phicellsq} induces a commutative diagram
  \begin{equation}
    \label{eq:phicelllke}
    \begin{tikzcd}
      \Alg_{\Lop_{/[n]}}(\uV) \arrow{d}{i_{n,!}} & \Alg_{\Lop_{/[m]}[\phi]}(\uV) \arrow{l}{\phi_{\bbL}^{*}} \arrow{d}{j_{\phi,!}}  & \Alg_{\Lop_{/[m]}}(\uV) \arrow{l}{i_{\phi,!}} \arrow{dl}{i_{m,!}} \\
      \Alg_{\Dop_{/[n]}}(\uV)  & \Alg_{\Dop_{/[m]}}(\uV). \arrow{l}{\phi^{*}}
    \end{tikzcd}
  \end{equation}
\end{propn}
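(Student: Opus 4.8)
After passing to opposite categories, the diagram \cref{eq:phicellsq} decomposes into the left-hand square on $\Lop_{/[n]},\Lop_{/[m]}[\phi],\Dop_{/[n]},\Dop_{/[m]}$ and the right-hand triangle on $\Lop_{/[m]},\Lop_{/[m]}[\phi],\Dop_{/[m]}$, and I would establish the commutativity of \cref{eq:phicelllke} by treating these separately. The triangle is formal: since $i_{\phi}\circ j_{\phi}=i_{m}$ as inclusions into $\simp_{/[m]}$, the restriction functors satisfy $i_{m}^{*}=j_{\phi}^{*}i_{\phi}^{*}$, and passing to left adjoints gives $i_{m,!}\simeq i_{\phi,!}j_{\phi,!}$ (the three operadic left Kan extensions all existing by \cref{propn:compalgs} together with the preceding lemma). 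The genuine content is therefore the left square, \ie{} the claim that the Beck--Chevalley transformation
\[ i_{n,!}\phi_{\bbL}^{*}\longrightarrow \phi^{*}i_{\phi,!},\]
obtained as the mate of the equality of restrictions $\phi_{\bbL}^{*}i_{\phi}^{*}=i_{n}^{*}\phi^{*}$ (using $i_{\phi,!}\dashv i_{\phi}^{*}$ and $i_{n,!}\dashv i_{n}^{*}$), is an equivalence.

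To prove this I would first reduce to a statement about composite algebras. Fix $A\in\Alg_{\Lop_{/[m]}[\phi]}(\uV)$. Restricting $\phi^{*}i_{\phi,!}A$ along $i_{n}$, and using $i_{\phi}\phi_{\bbL}=\phi i_{n}$ together with the full faithfulness of $i_{\phi,!}$ (so that $i_{\phi}^{*}i_{\phi,!}\simeq\id$), gives formally
\[ i_{n}^{*}\phi^{*}i_{\phi,!}A\simeq\phi_{\bbL}^{*}i_{\phi}^{*}i_{\phi,!}A\simeq\phi_{\bbL}^{*}A.\]
Hence, by the characterisation of composite algebras in \cref{propn:compalgs}, it suffices to show that $\phi^{*}i_{\phi,!}A$ is a \emph{composite} $\Dopn$-algebra: for then the counit exhibits $\phi^{*}i_{\phi,!}A\simeq i_{n,!}i_{n}^{*}\phi^{*}i_{\phi,!}A\simeq i_{n,!}\phi_{\bbL}^{*}A$ naturally in $A$, which is exactly the asserted equivalence. (This is consistent with the intended application: since every composite $\Dop_{/[m]}$-algebra has the form $i_{m,!}B\simeq i_{\phi,!}(j_{\phi,!}B)$, the statement is in fact equivalent to $\phi^{*}$ preserving composite algebras.)

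The crux is thus to verify that $\phi^{*}i_{\phi,!}A$ is composite, and for this I would use the explicit colimit formula for operadic left Kan extension (generalising the free-algebra formula of \cref{propn:freealg}): for an object $(a,b)$ over $[1]$ in $\Dopn$, the value $(\phi^{*}i_{\phi,!}A)(a,b)=(i_{\phi,!}A)(\phi(a),\phi(b))$ is a colimit over the \icat{} of active morphisms from $\phi$-cellular objects into $(\phi(a),\phi(b))$, while the iterated relative tensor product appearing in the composite condition is a colimit over the active morphisms from cellular objects into $(a,b)$ (a genuine \icat{}, not an \igpd{}, as it encodes the bar construction). The functor $\phi_{\bbL}$ carries the latter indexing category into the former, and because $\phi_{\bbL}$ preserves inert morphisms it identifies the two diagrams of tensor factors; the canonical composite-condition map is precisely the induced map on colimits. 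Thus the composite condition reduces to the assertion that this comparison functor $\phi_{\bbL}$ is \emph{cofinal}.

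Proving this cofinality is where I expect the real work to lie. I would organise it by factoring $\phi$ as a surjection $[n]\twoheadrightarrow[k]$ followed by an inert map $[k]\hookrightarrow[m]$; by \cref{obs:phicellsurj} this factors the left square into a ``surjective'' square (in which $\Lop_{/[m]}[\phi]$ is replaced by the ordinary $\Lop_{/[k]}$) and an ``inert'' square, and cofinality is preserved under such horizontal pasting of Beck--Chevalley squares. For the inert factor I expect the comparison functor to be essentially an equivalence, the three clauses defining $\phi$-cellularity having been set up exactly so that active maps into $(\phi(a),\phi(b))$ from $\phi$-cellular objects match active maps into $(a,b)$ from cellular objects; for the surjective factor, cofinality should instead express the contractibility of the ``extra'' bar-construction directions, \ie{} the collapse of relative tensor products over unit factors. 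In each case the verification amounts to a hands-on check, via Quillen's criterion, that the relevant comma \icats{} built from the $\phi$-cellularity conditions are weakly contractible; this combinatorial analysis is the main obstacle. (Alternatively, one might hope to deduce the Beck--Chevalley equivalence from a general base-change criterion for operadic left Kan extension in the patterns framework, but since the left square of \cref{eq:phicellsq} is not a pullback of categories in general, such a criterion would still have to build in the combinatorics of $\phi$-cellularity.)
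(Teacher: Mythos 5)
Your overall architecture coincides with the paper's: the right-hand triangle is formal (composability of operadic left Kan extensions along $i_{m} = i_{\phi}j_{\phi}$), and the content is the Beck--Chevalley transformation for the left-hand square, which via the pointwise formula for operadic left Kan extension reduces to a cofinality statement comparing the indexing \icats{} $(\bbLn^{\act})_{\psi/}$ and $(\bbL_{/[m]}[\phi])^{\act}_{\phi\psi/}$. Your detour through composite algebras --- showing $\phi^{*}i_{\phi,!}A$ is composite and then invoking \cref{propn:compalgs} --- is a harmless repackaging that lands on exactly the functor of \cref{lem:Lphicoinit}, and your reduction to that cofinality is sound. The paper then disposes of the cofinality by factoring $\phi$ and \emph{citing} \cite{nmorita}*{Proposition 4.37} for the surjective and injective cases, rather than reproving the combinatorics.

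Two points in your final step would fail as written. First, you propose to factor $\phi$ as a surjection followed by an \emph{inert} map; but inert maps in $\simp$ are interval inclusions, and a general $\phi$ does not admit such a factorization (e.g.\ $d_{1}\colon [1] \to [2]$ is injective but not inert). The factorization you need, and the one used in \cref{lem:Lphicoinit}, is surjective followed by \emph{injective}. Second, and relatedly, your expectation that the comparison functor for this second factor is ``essentially an equivalence'' underestimates it: already for $\phi = d_{1}\colon [1]\to[2]$ one has $\bbL_{/[2]}[\phi] = \simp_{/[2]}$, so the target of the comparison functor is the full bar-construction indexing category while the source is not, and the coinitiality statement is precisely the nontrivial assertion that iterated relative tensor products are computed correctly by the extension from $\Lopn$. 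This injective case is the hard half of \cite{nmorita}*{Proposition 4.37}, on the same footing as the surjective (unitality) case you do flag as requiring work. Since you explicitly defer the cofinality as ``the main obstacle'', your proposal establishes the correct reduction but not the proposition itself; to close it you must either import \cite{nmorita}*{Proposition 4.37} as the paper does, or actually carry out the weak-contractibility check for the relevant comma categories in the surjective and injective cases separately.
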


The following is the key technical point of the proof:
\begin{lemma}\label{lem:Lphicoinit}
  For any maps $\phi \colon [n] \to [m]$ and $\psi \colon [k] \to [n]$ in $\simp$, the functor
  \[ (\bbLn^{\act})_{\psi/} := \bbLn^{\act} \times_{\Dn^{\act}} (\Dn^{\act})_{\psi/} \to
    \bbL_{/[m]}[\phi]^{\act} \times_{\simp_{/[m]}^{\act}} (\simp_{/[m]}^{\act})_{\phi\psi/}        =:
    (\bbL_{/[m]}[\phi])^{\act}_{\phi\psi/}\]
  is coinitial.
\end{lemma}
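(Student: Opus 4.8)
The plan is to prove coinitiality via the \icat{}al Theorem~A (\cite{HTT}*{Theorem 4.1.3.1}): writing $F$ for the functor in question, it suffices to show that for every object $t$ of the target \icat{} the comma \icat{} $F_{/t}$ is weakly contractible. Unwinding the definitions, an object $t$ is an active map $\sigma \colon [k] \to [l]$ together with a $\phi$-cellular map $\chi \colon [l] \to [m]$ satisfying $\chi\sigma = \phi\psi$, and an object of $F_{/t}$ is a triple $(\rho, \psi', \tau)$ consisting of a cellular map $\psi' \colon [k'] \to [n]$, an active map $\rho \colon [k] \to [k']$ and an active map $\tau \colon [k'] \to [l]$ such that $\psi'\rho = \psi$, $\phi\psi' = \chi\tau$, and $\tau\rho = \sigma$; a morphism is an active map $[k'] \to [k'_{1}]$ over $[n]$ compatible with the $\rho$'s and the $\tau$'s. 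I will show that each $F_{/t}$ has a terminal object, which gives weak contractibility; equivalently, this terminal object is the value at $t$ of a left adjoint to $F$, so that $F$ is coinitial as a right adjoint.

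To construct the terminal object I first reduce to two special cases using the epi--mono factorization $\phi = \iota\epsilon$ with $\epsilon \colon [n] \twoheadrightarrow [n']$ surjective and $\iota \colon [n'] \hookrightarrow [m]$ injective. Since coinitial functors are closed under composition and (by \cref{obs:phicellsurj}) $\bbL_{/[m]}[\phi] = \bbL_{/[m]}[\iota]$ and $\bbL_{/[n']}[\epsilon] = \bbL_{/[n']}$, the functor $F$ factors as $F_{\iota}\circ F_{\epsilon}$, where $F_{\epsilon}$ is the functor associated to the surjection $\epsilon$ (with $\psi$) and $F_{\iota}$ that associated to the injection $\iota$ (with $\epsilon\psi$); it is then enough to treat each factor.

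For $\phi$ injective, the key observation is that the constraints force $\chi$ to take values in $[\phi(0),\phi(n)]$: indeed $\chi(0) = \phi(\psi(0))$ and $\chi(l) = \phi(\psi(k))$ since $\sigma$ is active, and $\chi$ is monotone, so only the middle clause of the definition of $\phi$-cellular is ever in force. Let $C := \chi^{-1}(\mathrm{im}\,\phi) \subseteq [l]$; this contains $0$ and $l$, so its inclusion defines an active map $\tau^{*} \colon [k'^{*}] \xto{\sim} C \hookrightarrow [l]$, and we set $\psi'^{*} := \phi^{-1}\chi\tau^{*}$ (well-defined as $\phi$ is injective). The heart of this case is checking that $\psi'^{*}$ is cellular: between consecutive elements of $C$ the map $\chi$ moves, by $\phi$-cellularity, within a single half-open interval $[\phi(j),\phi(j+1))$ and can only re-enter $\mathrm{im}\,\phi$ at $\phi(j+1)$, so each step of $\psi'^{*}$ raises the value by at most one. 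Since $\sigma$ also lands in $C$ (again because $\chi\sigma = \phi\psi$), it factors as $\tau^{*}\rho^{*}$ for a unique active $\rho^{*}$, and $(\rho^{*},\psi'^{*},\tau^{*})$ is an object of $F_{/t}$. It is terminal: for any $(\rho,\psi',\tau)$ the relation $\chi\tau = \phi\psi'$ shows $\tau$ also lands in $C$, so $\xi := \tau^{*-1}\tau$ is the unique active map with $\tau^{*}\xi = \tau$, and one checks directly that it is the unique morphism to $(\rho^{*},\psi'^{*},\tau^{*})$.

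For $\phi$ surjective we have $\bbL_{/[n']}[\phi]=\bbL_{/[n']}$, so now $\chi$ is honestly cellular and the terminal object is instead the \emph{maximal} cellular resolution: one lifts $\chi$ to a cellular $\psi'^{*}$ that climbs each fibre $\phi^{-1}(v) = [a_{v},b_{v}]$ of $\phi$ by unit steps, which may require enlarging $[l]$ by degeneracies recorded in a surjective active map $\tau^{*}$ onto $[l]$. I expect this to be the main obstacle: one must verify that cellularity of $\psi'^{*}$ together with $\phi\psi'^{*} = \chi\tau^{*}$ pins $\psi'^{*}$ to the fibre boundaries exactly where $\chi$ changes value, that the resulting resolution exists and is compatible with the $\rho^{*}$ determined by $\psi$, and that it receives a unique morphism from every object of $F_{/t}$. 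Once both special cases are established, the general statement follows by composing the two coinitial functors.
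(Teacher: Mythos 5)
Your reduction of the general case to the injective and surjective cases via the epi--mono factorization of $\phi$, using \cref{obs:phicellsurj}, is exactly the reduction the paper makes; the paper then simply cites \cite{nmorita}*{Proposition 4.37} for those two base cases, whereas you attempt to prove them directly. Your injective case does work: the slice $F_{/t}$ has a terminal object obtained by restricting $\chi$ to $C = \chi^{-1}(\mathrm{im}\,\phi)$, your check that the resulting $\psi'^{*}$ is cellular is the right one, and uniqueness of the comparison map follows from $\tau^{*}$ being a monomorphism. (Minor slip: terminal objects in the slices $F_{/t}$ assemble into a \emph{right} adjoint of $F$, which makes $F$ coinitial as a left adjoint.)

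The surjective case, however, is a genuine gap --- and not merely because you leave the verifications undone: the strategy of exhibiting a terminal object in each slice provably fails there. Take $\phi = \chi \colon [2] \to [1]$ both equal to $(0,0,1)$, $\psi = \id_{[2]}$, and $t = (\sigma = \id_{[2]}, \chi)$. A morphism $(\psi',\rho,\tau) \to (\psi^{*},\rho^{*},\tau^{*})$ in $F_{/t}$ is an active $\xi$ with $\psi^{*}\xi = \psi'$ and $\tau^{*}\xi = \tau$, so every pair $(\psi'(i),\tau(i))$ must occur among the pairs $(\psi^{*}(j),\tau^{*}(j))$, which form a monotone sequence for the product order on $[2]\times[2]$. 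Now $F_{/t}$ contains both the object with $\psi' = (0,1,1,2)$, $\tau = (0,0,1,2)$, $\rho = (0,2,3)$, whose pairs include $(1,0)$, and the object with $\psi' = (0,0,1,2)$, $\tau = (0,1,1,2)$, $\rho = (0,2,3)$, whose pairs include $(0,1)$ (one checks directly that both satisfy $\phi\psi' = \chi\tau = (0,0,0,1)$, $\psi'\rho = \id$, $\tau\rho = \id$). Since $(1,0)$ and $(0,1)$ are incomparable, no object of $F_{/t}$ receives morphisms from both, so there is no terminal object. (This particular slice happens to have an \emph{initial} object, namely $(\psi,\id,\sigma)$, but only because $\psi = \id_{[2]}$ is cellular; for general $\psi$ that object does not exist.) So the surjective case genuinely requires the more involved contractibility argument of \cite{nmorita}*{Proposition 4.37} rather than an adjointness argument, and your proof is incomplete without it.
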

\begin{proof}
  This is proved as \cite{nmorita}*{Proposition 4.37} when $\phi$ is either injective or surjective. The general case follows from this, since we may factor $\phi$ as $[n] \xto{\alpha} [l] \xto{\beta} [m]$ where $\alpha$ is surjective and $\beta$ is injective. The functor we want for $\phi$ is then the composite
  \[ (\bbLn^{\act})_{\psi/} \to
    (\bbL_{/[l]}[\alpha])^{\act}_{\alpha\psi/} \cong
    (\bbL_{/[l]}^{\act})_{\alpha\psi/} \to
    (\bbL_{/[l]}[\beta]^{\act})_{\phi\psi/} \cong
    (\bbL_{/[l]}[\phi]^{\act})_{\phi\psi/},\] using the identifications
  of \cref{obs:phicellsurj}.
\end{proof}

\begin{proof}[Proof of \cref{propn:phicelllke}]
  We must show that the Beck--Chevalley transformation
  \[ i_{n,!}\phi^{*}_{\Lambda} \to \phi^{*}j_{\phi,!}\]
  is an equivalence. Using the formula for operadic left Kan extensions, we see that at an object $\phi \colon [k]  \to [n]$ in $\Dopn$ this is given for any $\Lop_{/[m]}[\phi]$-algebra as a map on colimits induced by the opposite of the functor from \cref{lem:Lphicoinit}. Since this is cofinal (being the opposite of a coinitial functor), the induced map is indeed an equivalence.
\end{proof}

\begin{cor}\label{cor:compprescomposite}
  For any morphism $\phi \colon [n] \to [m]$ in $\simp$, the induced functor \[\phi^{*} \colon \Alg_{\Dop_{/[m]}}(\uV) \to \Alg_{\Dop_{/[n]}}(\uV) \] preserves composite algebras.
\end{cor}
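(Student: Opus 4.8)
The plan is to leverage \cref{propn:compalgs}, which identifies the composite $\Dop_{/[k]}$-algebras with the essential image of the operadic left Kan extension functor $i_{k,!}$. Concretely, a $\Dop_{/[m]}$-algebra $A$ is composite precisely when the canonical map $i_{m,!}i_{m}^{*}A \to A$ is an equivalence, so it suffices to show that $\phi^{*}A$ lies in the essential image of $i_{n,!}$ whenever $A$ lies in that of $i_{m,!}$.

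I would write a composite algebra $A$ as $i_{m,!}B$ with $B := i_{m}^{*}A \in \Alg_{\Lop_{/[m]}}(\uV)$, and then feed this through the commutative diagram \cref{eq:phicelllke} of \cref{propn:phicelllke}. Chasing around it, the right-hand triangle supplies an equivalence $i_{m,!} \simeq j_{\phi,!}i_{\phi,!}$, while the left-hand square supplies $\phi^{*}j_{\phi,!} \simeq i_{n,!}\phi_{\bbL}^{*}$. Composing these two identifications yields
\[ \phi^{*}A \simeq \phi^{*}i_{m,!}B \simeq \phi^{*}j_{\phi,!}i_{\phi,!}B \simeq i_{n,!}\phi_{\bbL}^{*}i_{\phi,!}B. \]
Since the right-hand side is manifestly in the essential image of $i_{n,!}$, \cref{propn:compalgs} immediately tells us that $\phi^{*}A$ is composite, as required.

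Thus the corollary is a formal consequence of \cref{propn:phicelllke}, and all the genuine work has been pushed into that proposition --- ultimately into the coinitiality assertion of \cref{lem:Lphicoinit}, which is where I expect the only real obstacle to lie. The one bookkeeping point I would want to verify carefully is that the comparison maps are the \emph{canonical} ones: namely that the equivalence $i_{m,!}\simeq j_{\phi,!}i_{\phi,!}$ is the one witnessing $i_{m,!}$ as an iterated operadic left Kan extension along the factorization $\Lop_{/[m]}\hookrightarrow\Lop_{/[m]}[\phi]\hookrightarrow\Dop_{/[m]}$ recorded in \cref{eq:phicellsq}, so that the three equivalences above compose coherently rather than merely objectwise. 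This coherence is precisely what the commutativity of the diagram in \cref{propn:phicelllke} asserts, so no separate argument beyond that proposition is needed.
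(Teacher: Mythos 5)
Your argument is correct and coincides with the paper's own proof: both write a composite algebra as $i_{m,!}$ of its restriction and then use the commutative diagram of \cref{propn:phicelllke} to obtain $\phi^{*}i_{m,!} \simeq i_{n,!}\phi_{\bbL}^{*}i_{\phi,!}$, concluding via \cref{propn:compalgs} that the result lies in the image of $i_{n,!}$ and is therefore composite. Your closing remark about the coherence of the comparison maps is exactly what the commutativity assertion of \cref{propn:phicelllke} provides, so nothing further is needed.
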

\begin{proof}
  Suppose $X \in \Alg_{\Dop_{/[m]}}(\uV)$ is a composite algebra. Then by \cref{propn:compalgs} we can write $X$ as $i_{m,!}X'$ for a unique $\Lop_{/[n]}$-algebra $X'$. From \cref{propn:phicelllke} we then get a natural equivalence
  \[ \phi^{*}X \simeq \phi^{*}i_{m,!}X' \simeq i_{n,!}\phi^{*}_{\Lambda}i_{\phi,!}X',\]
  so that $\phi^{*}X$ is in the image of $i_{n,!}$ and so composite.
\end{proof}

\begin{cor}\label{cor:Morcocart}
  The restricted projection $\Mor(\uV) \to \Dop$ is a cocartesian fibration.
\end{cor}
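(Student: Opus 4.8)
The plan is to exhibit $\Mor(\uV) \to \Dop$ as the restriction of the cocartesian fibration $\oMor(\uV) \to \Dop$ to a full subcategory that is closed under cocartesian transport, and then to appeal to the standard principle that such a restriction is again cocartesian, with the same cocartesian morphisms.

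First I would record precisely how $\oMor(\uV) \to \Dop$ is a cocartesian fibration: by \cref{propn:oMorftrisalg} applied to $\uc{K} = \{[n]\}$ its fibre over $[n]$ is $\Alg_{\Dopn}(\uV)$, and, chasing through \cref{propn:omor'eq}, the cocartesian pushforward along the morphism $[m] \to [n]$ of $\Dop$ corresponding to a map $\phi \colon [n] \to [m]$ of $\simp$ is the restriction functor $\phi^{*} \colon \Alg_{\Dop_{/[m]}}(\uV) \to \Alg_{\Dopn}(\uV)$. By definition $\Mor(\uV)$ is the full subcategory whose fibre over $[n]$ is the full subcategory $\Algc_{\Dopn}(\uV) \subseteq \Alg_{\Dopn}(\uV)$ of composite algebras; in particular it is a fibrewise full subcategory, cut out in each fibre by a property.

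The crucial input is then closure under cocartesian transport. A $\oMor(\uV)$-cocartesian morphism lying over $\phi$ with composite source $X \in \Alg_{\Dop_{/[m]}}(\uV)$ has target $\phi^{*}X$, and \cref{cor:compprescomposite} tells us exactly that $\phi^{*}X$ is again composite, hence an object of $\Mor(\uV)$. Thus every cocartesian morphism of $\oMor(\uV)$ whose source lies in $\Mor(\uV)$ also has its target in $\Mor(\uV)$. Feeding this into the general criterion --- if $p \colon \mathcal{E} \to \mathcal{B}$ is cocartesian and $\mathcal{E}' \subseteq \mathcal{E}$ is a full subcategory containing the target of every $p$-cocartesian edge with source in $\mathcal{E}'$, then $p|_{\mathcal{E}'}$ is cocartesian and an edge of $\mathcal{E}'$ is $p|_{\mathcal{E}'}$-cocartesian \IFF{} it is $p$-cocartesian --- yields that $\Mor(\uV) \to \Dop$ is a cocartesian fibration with cocartesian morphisms inherited from $\oMor(\uV)$, as claimed.

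I expect essentially no further obstacle, since the substantive work has already been carried out in \cref{cor:compprescomposite}; what remains is purely formal. The only points requiring care are bookkeeping ones: confirming that $\Mor(\uV)$ really is a fibrewise full subcategory (so that the general criterion applies and the property of being composite is detected fibrewise), and matching variances so that the cocartesian pushforward of $\oMor(\uV)$ is indeed given by the restriction functors $\phi^{*}$ appearing in \cref{cor:compprescomposite}.
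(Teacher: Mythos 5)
Your proposal is correct and follows exactly the paper's argument: the substantive input is \cref{cor:compprescomposite}, which shows the targets of cocartesian edges with composite source are again composite, and the rest is the standard fact that a full subcategory closed under cocartesian transport inherits a cocartesian fibration structure. The paper's proof is just a terser version of the same two steps.
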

\begin{proof}
  It follows from \cref{cor:compprescomposite} that if $X \to Y$ is a cocartesian morphism in $\oMor(\uV)$ where $X$ lies in $\Mor(\uV)$, then so does $Y$. The full subcategory $\Mor(\uV)$ therefore inherits the required cocartesian morphisms from $\oMor(\uV)$. 
\end{proof}

\begin{observation}\label{obs:cocartinmor}
  If we think of the objects of $\Mor(\uV)$ as $\Lopn$-algebras for
  all $n$, then the cocartesian transport functor over
  $\phi \colon [n] \to [m]$ in $\simp$ can be described as the
  composite
  \[ \Alg_{\Lop_{/[m]}}(\uV) \xto{j_{\phi,!}} \Alg_{\Lop[\phi]}(\uV) \xto{\phi_{\bbL}^{*}} \Alg_{\Lop_{/[n]}}(\uV).\]
\end{observation}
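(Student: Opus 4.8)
The plan is to compute the cocartesian transport of $\oMor(\uV) \to \Dop$ first, and then transfer it across the identification of the fibres of $\Mor(\uV)$ with the categories $\Alg_{\Lop_{/[k]}}(\uV)$ supplied by \cref{propn:compalgs}. By \cref{cor:Morcocart} the cocartesian morphisms of $\Mor(\uV)$ are precisely those cocartesian morphisms of $\oMor(\uV)$ that lie in $\Mor(\uV)$, so the transport for $\Mor(\uV)$ over $\phi$ is nothing but the restriction to composite algebras of the transport for $\oMor(\uV)$. Since $\oMor(\uV) \to \Dop$ is by construction the cocartesian fibration associated to the functor $\Alg_{\Dop_{/\blank}}(\uV)$ (see \cref{propn:oMorftrisalg}), whose functoriality is by restriction, this transport is exactly the functor $\phi^{*} \colon \Alg_{\Dop_{/[m]}}(\uV) \to \Alg_{\Dop_{/[n]}}(\uV)$ of \cref{cor:compprescomposite} --- which, as recorded there, does preserve composite algebras.

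Next I would translate this into $\bbL$-language. By \cref{propn:compalgs} the fibre of $\Mor(\uV)$ over $[k]$ is the full subcategory $\Algc_{\Dop_{/[k]}}(\uV)$, and the operadic left Kan extension $i_{k,!} \colon \Alg_{\Lop_{/[k]}}(\uV) \to \Alg_{\Dop_{/[k]}}(\uV)$ is a fully faithful functor onto it, with inverse (on its image) the restriction $i_{k}^{*}$; in particular the unit of the adjunction $i_{k,!} \dashv i_{k}^{*}$ is invertible, so $i_{k}^{*}i_{k,!} \simeq \id$. Under these fibrewise identifications the cocartesian transport over $\phi$ becomes the composite $i_{n}^{*} \circ \phi^{*} \circ i_{m,!}$.

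Finally, \cref{propn:phicelllke} supplies the essential input: the commutativity of \cref{eq:phicelllke} yields a natural equivalence $\phi^{*} i_{m,!} \simeq i_{n,!}\, \phi_{\bbL}^{*}\, j_{\phi,!}$ (this is precisely the factorization already used in the proof of \cref{cor:compprescomposite}). Postcomposing with $i_{n}^{*}$ and cancelling via $i_{n}^{*}i_{n,!} \simeq \id$ then gives
\[ i_{n}^{*} \circ \phi^{*} \circ i_{m,!} \;\simeq\; i_{n}^{*}i_{n,!}\, \phi_{\bbL}^{*}\, j_{\phi,!} \;\simeq\; \phi_{\bbL}^{*}\, j_{\phi,!}, \]
which is the claimed description $\Alg_{\Lop_{/[m]}}(\uV) \xto{j_{\phi,!}} \Alg_{\Lop_{/[m]}[\phi]}(\uV) \xto{\phi_{\bbL}^{*}} \Alg_{\Lop_{/[n]}}(\uV)$ of the transport.

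The genuine mathematical content is thus entirely contained in \cref{propn:phicelllke}, and ultimately in the coinitiality of \cref{lem:Lphicoinit}; what remains for this observation is bookkeeping. Accordingly, I expect the only point needing real care to be the verification that the abstract cocartesian transport of $\oMor(\uV)$ is identified with the restriction functor $\phi^{*}$, and that the fibrewise identifications via $i_{k,!}$ and $i_{k}^{*}$ are natural enough to assemble the above chain of equivalences into an equivalence of \emph{functors} rather than a pointwise statement. This naturality holds because $i_{k,!}$, $i_{k}^{*}$ and $\phi^{*}$ are all honest functors and the equivalence of \cref{propn:phicelllke} is itself natural; everything else is a formal consequence of \cref{cor:Morcocart} and the adjoint equivalence $i_{n,!} \dashv i_{n}^{*}$.
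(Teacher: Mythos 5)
Your argument is correct and is precisely the reasoning the paper intends: the observation is left unproved there because, exactly as you say, it follows by combining \cref{cor:Morcocart} (transport in $\Mor(\uV)$ is the restriction of $\phi^{*}$ to composite algebras), the fibrewise identification $i_{k,!}\dashv i_{k}^{*}$ from \cref{propn:compalgs}, and the equivalence $\phi^{*}i_{m,!}\simeq i_{n,!}\phi_{\bbL}^{*}j_{\phi,!}$ from \cref{propn:phicelllke}. No gaps; the only content beyond bookkeeping is indeed \cref{lem:Lphicoinit}.
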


Combining \cref{cor:Morcocart} with \cref{cor:Segcond}, we have shown:
\begin{cor}
  The projection $\Mor(\uV) \to \Dop$ is a double \icat{}.
\end{cor}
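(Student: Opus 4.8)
I would prove this by appealing to the standard criterion (used already in \cref{propn:Arsimpopdouble}) that a double \icat{} is the same thing as a cocartesian fibration over $\Dop$ whose fibres satisfy the Segal condition. The cocartesian fibration is exactly \cref{cor:Morcocart}, so the whole content is to check that for each $n$ the Segal map
\[
  \Mor(\uV)_{[n]} \longrightarrow \Mor(\uV)_{[1]} \times_{\Mor(\uV)_{[0]}} \cdots \times_{\Mor(\uV)_{[0]}} \Mor(\uV)_{[1]},
\]
given by cocartesian transport along the inert maps $[1],[0] \hookrightarrow [n]$, is an equivalence. The fibres are immediate from the definitions: $\Mor(\uV)_{[n]} = \Algc_{\Dopn}(\uV)$, while the composite condition is vacuous for $n \leq 1$, so that $\Mor(\uV)_{[1]} = \Bimod(\uV)$ and $\Mor(\uV)_{[0]} = \Alg(\uV)$. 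Hence the target of the Segal map is precisely $\Alg^{\Seg}_{\Lopn}(\uV)$, with its legs the source-and-target functors $\Bimod(\uV) \to \Alg(\uV)$, and the plan is to identify the Segal map itself with $j_{n,\Seg}^{*}$ and then quote \cref{cor:Segcond}.

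To make this identification I would work in terms of $\Dopn$-algebras. Since $\Mor(\uV)$ inherits its cocartesian morphisms from $\oMor(\uV)$ by \cref{cor:Morcocart}, and cocartesian transport in $\oMor(\uV)$ along an inert $\rho\colon [1] \hookrightarrow [n]$ is simply restriction $\rho^{*}$ along the induced map $\rho\colon \Dop_{/[1]} \to \Dopn$ (cocartesian transport in the fibration $\oMor'(\uV)$ of \cref{propn:omor'eq} is restriction, and the algebra subcategory $\oMor(\uV)$ is closed under it; likewise for the vertices), the Segal map is the restriction functor
\[
  \Algc_{\Dopn}(\uV) \to \Bimod(\uV) \times_{\Alg(\uV)} \cdots \times_{\Alg(\uV)} \Bimod(\uV).
\]
The key elementary observation is then that such an inert $\rho$ factors through the cellular subcategory: its image is a consecutive pair $\{k,k+1\}$, so $\rho\psi$ is cellular for every $\psi$, and $\rho$ factors as $\Dop_{/[1]} = \Lop_{/[1]} \xto{\bar\rho} \Lopn \xto{i_{n}} \Dopn$ (the vertex inclusions factor analogously). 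By \cref{propn:compalgs} a composite algebra is $X = i_{n,!}X'$ with $X' = i_{n}^{*}X$, so restriction computes as $\rho^{*}X = X \circ i_{n} \circ \bar\rho = (i_{n}^{*}X) \circ \bar\rho = \bar\rho^{*}X'$. Thus under the equivalence $i_{n}^{*} \colon \Algc_{\Dopn}(\uV) \isoto \Alg_{\Lopn}(\uV)$ the Segal map becomes restriction along the inclusions $\Lop_{/[1]}, \Lop_{/[0]} \hookrightarrow \Lopn$, i.e.\ exactly $j_{n,\Seg}^{*}$, which is an equivalence by \cref{cor:Segcond}.

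The only real obstacle is the bookkeeping in the second paragraph: one must be sure that the cocartesian transport of $\Mor(\uV)$ is genuinely plain restriction of $\Dopn$-algebras — so that it can be compared directly with $j_{n,\Seg}^{*}$ rather than with the left-Kan-extension description of \cref{obs:cocartinmor} — and that the inert structure maps factor through $\Lopn$. Both points are straightforward, and all the genuine mathematical weight has already been discharged in \cref{cor:Segcond}; once the Segal map is recognised as $j_{n,\Seg}^{*}$ there is nothing left to prove.
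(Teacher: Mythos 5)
Your argument is correct and is exactly the paper's proof: the paper simply combines \cref{cor:Morcocart} with \cref{cor:Segcond}, leaving implicit the bookkeeping you spell out (that the inert transport functors are plain restrictions factoring through $\Lopn$, so the Segal map is identified with $j_{n,\Seg}^{*}$). Your filled-in details are all accurate.
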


\section{Comparison with Lurie's version}\label{sec:compare}
In this section we will show that the double \icat{} $\Mor(\uV)$ we constructed in the previous section is equivalent to that defined by Lurie in \cite{HA}*{\S 4.4}. We start by briefly reviewing Lurie's construction.

\begin{defn}
  Let $\Arc(\simp)$ denote the full subcategory of $\Ar(\simp)$ spanned by the cellular morphisms (in the sense of \cref{defn:cellular}), and let $\ev_{1}^{c} \colon \Arc(\simp) \to \simp$ denote the restriction of evaluation at $1$ in $[1]$ to this full subcategory. Note that the fibre of $\ev_{1}^{c}$ at $[n]$ is then $\bbL_{/[n]}$. We will also use the notation
  \[ \wL := \Arc(\simp). \]
\end{defn}

\begin{lemma}\label{lem:wLopgnsiopd}
  $\ev^{c,\op}_{0} \colon \wL^{\op} \to \Dop$ is a \gnsiopd{}, and for any functor $\uc{K} \to \Dop$, so is the pullback
  $\uc{K} \times_{\Dop} \wL^{\op}$ over $\ev^{c,\op}_{1}$ via the composite
\[ \uc{K} \times_{\Dop} \wL^{\op} \to \wL^{\op} \xto{\ev^{c,\op}_{0}} \Dop. \]
\end{lemma}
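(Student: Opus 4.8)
The plan is to exhibit both functors as restrictions of the double \icats{} of \cref{propn:Arsimpopdouble} and \cref{pbarsimpdouble} to suitable full subcategories, and then to recognize these full subcategories as \gnsiopds{}. The criterion I would use (cf. \cite{nmorita}*{Lemma 4.15}, or directly the definition of a \gnsiopd{} in \cite{enr}*{\S 3}) is that a full subcategory $\uc{O}$ of a double \icat{} $\uc{D} \to \Dop$ is a \gnsiopd{} as soon as: (i) the cocartesian lift of any inert morphism of $\Dop$ whose source lies in $\uc{O}$ again lies in $\uc{O}$; and (ii) an object of $\uc{D}$ over $[n]$ lies in $\uc{O}$ \IFF{} each of its cocartesian images over the inert maps $[n]\to[1]$ lies in $\uc{O}$ --- so that the Segal equivalence for $\uc{D}$ restricts to one for $\uc{O}$.

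For the first assertion I would take $\uc{D} = \Ar(\simp)^{\op}$ and $\uc{O} = \wL^{\op}$. The one computation needed is the shape of the cocartesian transport for $\ev_0^{\op}$: unwinding the description of $\ev_0$-cartesian morphisms recalled just before \cref{propn:Arsimpopdouble}, the cocartesian lift over an inert morphism $[n]\to[m]$ of $\Dop$ --- corresponding to an interval inclusion $\eta \colon [m]\hookrightarrow[n]$ in $\simp$ --- sends an object $\phi \colon [n]\to[b]$ to the composite $\phi\eta \colon [m]\to[b]$. Because $\eta(i+1)=\eta(i)+1$, precomposition with $\eta$ preserves cellularity, which gives (i). For (ii), the map $\phi$ is cellular precisely when $\phi(i+1)\le\phi(i)+1$ for every $i$, i.e. precisely when each restriction $\phi|_{\{i,i+1\}}$ --- the cocartesian image of $\phi$ over the inert map $[n]\to[1]$ selecting $\{i,i+1\}$ --- is cellular; this is exactly the Segal condition for $\wL^{\op}$ inside $\Ar(\simp)^{\op}$.

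For the second assertion I would take $\uc{D} = \uc{K}\times_{\Dop}\Ar(\simp)^{\op}$, which is a double \icat{} by \cref{pbarsimpdouble}, and $\uc{O} = \uc{K}\times_{\Dop}\wL^{\op}$, which is visibly the full subcategory of $\uc{D}$ on the objects whose image under $\pr_2 \colon \uc{D}\to\Ar(\simp)^{\op}$ lies in $\wL^{\op}$. By the characterization of cocartesian morphisms in the proof of \cref{propn:oMorftrisalg} (extracted from the pullback square \cref{eq:ardoppbsq}), $\pr_2$ carries inert cocartesian morphisms of $\uc{D}$ to inert cocartesian morphisms of $\Ar(\simp)^{\op}$. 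Since membership in $\uc{O}$ depends only on the $\pr_2$-image, conditions (i) and (ii) for $\uc{O}$ follow at once from the same conditions for $\wL^{\op}$ verified above, so $\uc{K}\times_{\Dop}\wL^{\op}$ is a \gnsiopd{}.

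The only real obstacle here is bookkeeping with variance: one must match inert morphisms of $\Dop$ with interval inclusions of $\simp$ and confirm that cocartesian transport for $\ev_0^{\op}$ is given by \emph{pre}composition (rather than postcomposition), which is what makes cellularity manifestly stable. Granting that identification, both required conditions collapse to the elementary fact that the inequality defining cellularity constrains only consecutive pairs and is therefore detected on inert restrictions to $[1]$.
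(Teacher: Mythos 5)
Your argument is correct and follows the paper's proof in all essentials: both hinge on \cref{propn:Arsimpopdouble} together with the observation that cellularity is detected on inert restrictions to $[1]$, fed into the full-subcategory criterion of \cite{nmorita}*{Lemma 4.15}. The only (harmless) divergence is in the second claim, where the paper exhibits $\uc{K}\times_{\Dop}\wLop$ directly as a pullback of \gnsiopds{} via the square from \cref{pbarsimpdouble}, whereas you re-run the subcategory criterion inside the double \icat{} $\uc{K}\times_{\Dop}\Ar(\simp)^{\op}$; both routes work.
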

\begin{proof}
  We can apply \cite{nmorita}*{Lemma 4.15} to the inclusion of
  $\wL^{\op}$ in $\Ar(\simp)^{\op}$ (which is a double \icat{} by
  \cref{propn:Arsimpopdouble}) to conclude that the former is a
  \gnsiopd{}, since a morphism $[k] \to [m]$ is cellular \IFF{} all
  its inert restrictions to $[1]$ are cellular. As in the proof of \cref{pbarsimpdouble}, we then have for any functor $\uc{K} \to \Dop$ a pullback
  \[
    \begin{tikzcd}
      \uc{K} \times_{\Dop} \wLop \arrow{r}{\pr_{2}} \arrow{d}[swap]{(\pr_{1}, \ev_{0}^{\op}\pr_{2})} & \wLop  \arrow{d}{(\ev_{1}^{\op}, \ev_{0}^{\op})}\\
      \uc{K} \times \Dop \arrow{r}{\phi \times \id} & \Dop \times \Dop
    \end{tikzcd}
  \]
  of \gnsiopds{}.
\end{proof}

\begin{propn}[\cite{HA}*{Proposition 4.4.3.21}]
  $\ev^{c}_{1} \colon \Arc(\simp) \to \simp$ is an exponentiable fibration\footnote{In other words, pullback along this functor preserves colimits, and so has a right adjoint. Such functors are also known as \emph{flat} or \emph{Conduch\'e} fibrations.}. \qed
\end{propn}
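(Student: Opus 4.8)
The plan is to verify the flatness criterion directly, since (as the footnote records) a functor of \icats{} is exponentiable precisely when it is flat, and flatness of an inner fibration may be tested on $2$-simplices of the base. First I would observe that $\ev_{1}^{c}$ is an inner fibration, being the composite of the full inclusion $\Arc(\simp) \hookrightarrow \Ar(\simp)$ with the cocartesian fibration $\ev_{1} \colon \Ar(\simp) \to \simp$ (see \cref{propn:Arsimpopdouble} and the preceding observation). Now fix a pair of composable maps $[a] \xto{\alpha} [b] \xto{\beta} [c]$ in $\simp$ together with an edge of $\Arc(\simp)$ lying over the composite $\beta\alpha$; concretely the latter is a commutative square
\[
\begin{tikzcd}
 {[k]} \arrow{r}{u} \arrow{d}[swap]{x} & {[p]} \arrow{d}{z} \\
 {[a]} \arrow{r}{\beta\alpha} & {[c]}
\end{tikzcd}
\]
with $x$ and $z$ cellular. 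By the (Conduch\'e) criterion for flat inner fibrations in its \icat{} form, recorded in \cite{HA}, the content of flatness is that the \emph{factorization category} $\mathcal{F}$ has weakly contractible classifying space, where the objects of $\mathcal{F}$ are cellular arrows $y \colon [l] \to [b]$ equipped with a factorization $[k] \xto{v} [l] \xto{w} [p]$ of $u$ satisfying $yv = \alpha x$ and $\beta y = zw$, and the morphisms are maps $[l] \to [l']$ over $[b]$ compatible with the $v$'s and $w$'s.

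The key point is that $\mathcal{F}$ is the full subcategory, on the cellular objects, of the analogous factorization category $\mathcal{F}^{\Ar}$ formed in $\Ar(\simp)$ (dropping the cellularity requirement on $y$), and that $\mathcal{F}^{\Ar}$ has an \emph{initial} object because $\ev_{1}$ is a cocartesian fibration: namely the cocartesian pushforward $([k], \alpha x, \id_{[k]}, u)$. I would then manufacture a canonical object of $\mathcal{F}$ by \emph{cellularizing} this pushforward. Since $x$ is cellular, the arrow $\alpha x \colon [k] \to [b]$ can only skip values of $[b]$ lying strictly between consecutive values of $\alpha$; inserting one new vertex for each such skipped value produces a cellular arrow $y_{0} \colon [l_{0}] \to [b]$ together with an inclusion $v_{0} \colon [k] \hookrightarrow [l_{0}]$ with $y_{0}v_{0} = \alpha x$. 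To lift this to an object of $\mathcal{F}$ I must still define $w_{0} \colon [l_{0}] \to [p]$ on the inserted vertices, and here the cellularity of $z$ is used: between $u(i)$ and $u(i{+}1)$ the map $z$ runs cellularly from $\beta\alpha x(i)$ to $\beta\alpha x(i{+}1)$ and therefore attains every intermediate value of $[c]$, so each inserted vertex (whose $\beta y_{0}$-image is such an intermediate value) admits a lift along $z$. Taking these lifts minimally yields a well-defined object $\Phi = ([l_{0}], y_{0}, v_{0}, w_{0}) \in \mathcal{F}$.

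It then remains to show that $\Phi$ is an initial object of $\mathcal{F}$ --- equivalently, that cellularization defines a reflection, i.e.\ a left adjoint to the inclusion $\mathcal{F} \hookrightarrow \mathcal{F}^{\Ar}$ --- for then $\mathcal{F}$ has an initial object and its classifying space is contractible, completing the verification of flatness and hence of exponentiability. Given any $([l], y, v, w) \in \mathcal{F}$, the comparison map $[l_{0}] \to [l]$ is forced to be $v$ on $[k]$, and on each inserted vertex it is determined by the requirement that it lie over $y$ and be compatible with $w$; the existence of such a map relies on $y$ being cellular (so that its image already contains the inserted intermediate values), while compatibility over $z$ is what pins it down. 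I expect this last step --- proving that the comparison map is \emph{uniquely} determined, uniformly across the surjective and inert parts of the cellular decomposition of the arrows involved --- to be the main obstacle, precisely because it is here that one must invoke the cellularity hypotheses on \emph{both} $x$ and $z$ rather than on either one alone. Once uniqueness is secured, initiality of $\Phi$ follows and the proposition is proved.
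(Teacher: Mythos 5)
The paper does not actually prove this statement: it is quoted from \cite{HA}*{Proposition 4.4.3.21} and marked \qed{}, so the only question is whether your argument is correct on its own terms. Your reduction is sound as far as it goes: $\ev_{1}^{c}$ is an inner fibration, flatness may be tested on $2$-simplices $[a] \xto{\alpha} [b] \xto{\beta} [c]$ of the base, the category to analyze is the factorization category $\mathcal{F}$ you describe, and the cellularized object $\Phi$ is well defined (your discrete intermediate-value argument for constructing $w_{0}$ is fine).

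The gap is the last step, and it is not a uniqueness check that merely remains to be carried out: $\Phi$ is genuinely not initial, and in fact $\mathcal{F}$ need not have an initial object at all, so the argument cannot be closed in this form. Take $a=1$, $b=c=2$, $\alpha=(0,2)$, $\beta=\id_{[2]}$, $x=\id_{[1]}$, $z=\id_{[2]}$, $u=\alpha$ (lists denote monotone maps as in the paper's notation for objects of $\Dopn$). Then $\Phi=([2],\id_{[2]},\alpha,\id_{[2]})$. Consider the object of $\mathcal{F}$ with $[l]=[4]$, $y=w=(0,1,1,1,2)$, $v=(0,4)$: a morphism $\Phi \to ([4],y,v,w)$ is a monotone $t$ with $t(0)=0$, $t(2)=4$ and $t(1)\in y^{-1}(1)=\{1,2,3\}$, and all three choices satisfy every compatibility, so $\Phi$ is not initial. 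Moreover any object $([l_{I}],y_{I},v_{I},w_{I})$ of $\mathcal{F}$ has $y_{I}^{-1}(1)\neq\emptyset$ (cellularity plus $y_{I}v_{I}=\alpha$), the unique morphism $I \to \Phi$ is forced to be $y_{I}$ itself, and composing it with the three morphisms above gives three distinct maps $I \to ([4],y,v,w)$; hence no initial object exists. Equivalently, cellularization cannot be a reflection onto $\mathcal{F}\subseteq\mathcal{F}^{\Ar}$, since a left adjoint would carry the initial object of $\mathcal{F}^{\Ar}$ to one of $\mathcal{F}$. The category $\mathcal{F}$ is still weakly contractible --- the proposition is true --- but this must be established by a different mechanism (in the example above $\Phi$ happens to be \emph{terminal}; Lurie's proof proceeds by a more elaborate contractibility argument rather than by exhibiting an initial object). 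If you do not wish to reprove the result, the paper's own route --- citing \cite{HA}*{Proposition 4.4.3.21} --- is available.
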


\begin{cor}
  For any \icat{} $\uc{C}$ over $\Dop$ there exists an \icat{} \[\oMorL(\uc{C}) := \ev^{c}_{1,*}\ev^{c,*}_{1}\uc{C}\] over $\Dop$ such that
  \[ \Fun_{/\Dop}(\uK, \oMorL(\uc{C})) \simeq \Fun_{/\Dop}(\uK \times_{\Dop} \Arc(\simp)^{\op}, \uc{C})\]
  for any \icat{} $\uK$ over $\Dop$.
\end{cor}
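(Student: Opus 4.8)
The plan is to deduce the statement formally from the exponentiability of $\ev_{1}^{c}$ established in the preceding proposition, following the same template as the proof of \cref{propn:omor'eq} but with the exponentiable fibration $\ev_{1}^{c}$ replacing the cartesian fibration $\ev_{1}^{\op}$. By \cite{HA}*{Proposition 4.4.3.21} the functor $\ev_{1}^{c} \colon \Arc(\simp) \to \simp$ is exponentiable, which (as the footnote records) means precisely that pullback along it preserves colimits and hence has a right adjoint. Since a functor is exponentiable exactly when its opposite is, the opposite fibration $\ev_{1}^{c,\op} \colon \Arc(\simp)^{\op} \to \Dop$ is likewise exponentiable, so pullback
\[ \ev^{c,*}_{1} \colon \CatI_{/\Dop} \to \CatI_{/\Arc(\simp)^{\op}}, \qquad \uc{C} \mapsto \uc{C} \times_{\Dop} \Arc(\simp)^{\op}, \]
along $\ev_{1}^{c,\op}$ admits a right adjoint $\ev^{c}_{1,*}$. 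I would then set $\oMorL(\uc{C}) := \ev^{c}_{1,*}\ev^{c,*}_{1}\uc{C}$, which is an \icat{} over $\Dop$.

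For the universal property I would chain the evident equivalences. The adjunction $\ev^{c,*}_{1} \dashv \ev^{c}_{1,*}$ gives, naturally in $\uK \to \Dop$,
\[ \Fun_{/\Dop}(\uK, \oMorL(\uc{C})) \simeq \Fun_{/\Arc(\simp)^{\op}}\bigl(\ev^{c,*}_{1}\uK,\, \ev^{c,*}_{1}\uc{C}\bigr), \]
whose right-hand side is the space of maps over $\Arc(\simp)^{\op}$ between the pullbacks $\uK \times_{\Dop} \Arc(\simp)^{\op}$ and $\uc{C} \times_{\Dop} \Arc(\simp)^{\op}$. The universal property of the latter pullback then identifies such a map with a map $\uK \times_{\Dop} \Arc(\simp)^{\op} \to \uc{C}$ over $\Dop$, yielding
\[ \Fun_{/\Dop}(\uK, \oMorL(\uc{C})) \simeq \Fun_{/\Dop}(\uK \times_{\Dop} \Arc(\simp)^{\op}, \uc{C}), \]
as desired.

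Since the real content sits in the cited exponentiability statement, I do not expect a genuine obstacle here; the one thing to get right is the bookkeeping of opposites together with the passage between mapping \emph{over $\Dop$} and \emph{over $\Arc(\simp)^{\op}$}. Both points are formal: exponentiability transfers to opposites because pullback along $p$ and along $p^{\op}$ are interchanged by the opposite-category involution on slices, and the final identification is just the universal property of pullback applied fibrewise. The only substantive difference from \cref{propn:omor'eq} is that there $\ev_{1}^{\op}$ is a cartesian fibration classified by $\Dop_{/\blank}$, so one may quote \cite{freepres}*{Proposition 7.3} directly; here $\ev_{1}^{c}$ is merely exponentiable, so the cleanest route is to use the right adjoint to pullback provided by exponentiability itself. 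Should \cite{freepres}*{Proposition 7.3} already be formulated for general exponentiable (flat) fibrations, one could instead cite it verbatim, exactly as in the proof of \cref{propn:omor'eq}.
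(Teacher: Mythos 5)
Your overall strategy matches the paper's: define $\oMorL(\uc{C})$ via the right adjoint to pullback along the (opposite of the) exponentiable fibration $\ev_{1}^{c}$, then unwind the adjunction together with the universal property of the pullback. Your remark that exponentiability passes to opposites is correct and worth making explicit (the paper leaves it implicit), and the final identification of maps over $\Arc(\simp)^{\op}$ into the pullback with maps over $\Dop$ into $\uc{C}$ is unproblematic even at the level of functor \icats{}, since $\Fun(\uD,\blank)$ preserves the limit defining the pullback.

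There is, however, one step you gloss over, and it is precisely the step that carries the content of the paper's proof. An adjunction $\ev_{1}^{c,*} \dashv \ev_{1,*}^{c}$ of \icats{} gives, by definition, natural equivalences of mapping \emph{spaces}
\[ \Map_{/\Dop}(\uK, \oMorL(\uc{C})) \simeq \Map_{/\Arc(\simp)^{\op}}(\ev_{1}^{c,*}\uK, \ev_{1}^{c,*}\uc{C}) \simeq \Map_{/\Dop}(\uK \times_{\Dop} \Arc(\simp)^{\op}, \uc{C}), \]
not of the functor \icats{} $\Fun_{/\Dop}$, which is what the statement asserts. Your first displayed equivalence, written with $\Fun$ rather than $\Map$, is therefore exactly the thing that needs to be justified rather than a consequence of the adjunction. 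The standard fix --- and the route the paper takes --- is to test against an arbitrary \icat{} $\uD$ using the defining property $\Map_{\CatI}(\uD, \Fun_{/\Dop}(X,Y)) \simeq \Map_{/\Dop}(\uD \times X, Y)$, apply the mapping-space equivalence with $\uD \times \uK$ in place of $\uK$, observe that $(\uD \times \uK) \times_{\Dop} \Arc(\simp)^{\op} \simeq \uD \times (\uK \times_{\Dop} \Arc(\simp)^{\op})$ (\ie{} the left adjoint $\ev_{1}^{c,*}$, being a pullback, is compatible with the tensoring of the slice over $\CatI$), and conclude by the Yoneda lemma. With that paragraph inserted your argument is complete; without it, the passage from an adjunction to an equivalence of $\Fun_{/\Dop}$'s is unjustified.
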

\begin{proof}
  The right adjoint $\ev^{c}_{1,*}$ gives natural equivalences of \igpds{}
  \[ \Map_{/\Dop}(\uK, \oMorL(\uc{C})) \simeq \Map_{/\Dop}(\uK \times_{\Dop} \Arc(\simp)^{\op}, \uc{C}). \]
  To upgrade this to an equivalence of \icats{}, we simply observe that for any \icat{} $\uD$ we get
  \[ 
    \begin{split}
     \Map_{\CatI}(\uD, \Fun_{/\Dop}(\uK, \oMorL(\uc{C}))) & 
                                                            \simeq \Map_{/\Dop}(\uD \times \uK, \oMorL(\uc{C})) \\
                                                          & \simeq \Map_{/\Dop}((\uD \times \uK) \times_{\Dop} \Arc(\simp)^{\op}, \uc{C}) \\
       & \simeq \Map_{/\Dop}(\uD \times (\uK \times_{\Dop} \Arc(\simp)^{\op}), \uc{C}) \\
       & \simeq \Map_{\CatI}(\uD, \Fun_{/\Dop}(\uK \times_{\Dop} \Arc(\simp)^{\op}, \uc{C})),
    \end{split}
  \]
  and then apply the Yoneda lemma.
\end{proof}

\begin{observation}
  An object of $\oMorL(\uc{C})$ over $[n]$ is a commutative triangle
  \[
    \begin{tikzcd}
      \bbL_{/[n]}^{\op} \arrow{dr} \arrow{rr} & & \uc{C} \arrow{dl} \\
       & \Dop
    \end{tikzcd}
  \]
\end{observation}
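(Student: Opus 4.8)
The statement is just a description of the fibre $\oMorL(\uc{C})_{[n]}$, and the plan is to read it off from the defining universal property of $\oMorL(\uc{C})$ by specializing the representing \icat{} to a point. First I would take $\uK = \{[n]\}$, the terminal \icat{} mapping to the object $[n] \in \Dop$. Then $\Fun_{/\Dop}(\{[n]\}, \oMorL(\uc{C}))$ is by definition the fibre $\oMorL(\uc{C})_{[n]}$, so the equivalence
\[ \Fun_{/\Dop}(\uK, \oMorL(\uc{C})) \simeq \Fun_{/\Dop}(\uK \times_{\Dop} \Arc(\simp)^{\op}, \uc{C}) \]
identifies this fibre with $\Fun_{/\Dop}(\{[n]\} \times_{\Dop} \Arc(\simp)^{\op}, \uc{C})$; in particular its objects are exactly the functors $\{[n]\} \times_{\Dop} \Arc(\simp)^{\op} \to \uc{C}$ over $\Dop$.

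It then remains to identify the source. By construction $\{[n]\} \times_{\Dop} \Arc(\simp)^{\op}$ is the fibre of $\ev^{c,\op}_{1} \colon \Arc(\simp)^{\op} \to \Dop$ over $[n]$, and since forming opposites commutes with forming fibres this is the opposite of the fibre of $\ev^{c}_{1} \colon \Arc(\simp) \to \simp$ over $[n]$. By the definition of $\Arc(\simp)$ as the full subcategory of cellular arrows, together with $\ev^{c}_{1}$ being evaluation at $1 \in [1]$, that fibre is precisely $\bbL_{/[n]}$, so $\{[n]\} \times_{\Dop} \Arc(\simp)^{\op} \simeq \bbL_{/[n]}^{\op}$. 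The residual map to $\Dop$, along which $\Fun_{/\Dop}(\blank, \uc{C})$ is formed, is the restriction of $\ev^{c,\op}_{0}$ to this fibre; by the pullback square in \cref{lem:wLopgnsiopd} (taken with $\uc{K} = \{[n]\}$) this is exactly the forgetful functor exhibiting $\bbL_{/[n]}^{\op}$ as a \gnsiopd{}. Substituting, the objects of $\oMorL(\uc{C})_{[n]}$ are the functors $\bbL_{/[n]}^{\op} \to \uc{C}$ over $\Dop$, \ie{} the commutative triangles displayed in the statement.

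This is a direct unwinding of the universal property, so I do not expect a genuine obstacle; the only point needing care is the bookkeeping of opposites --- checking that the residual structure functor $\bbL_{/[n]}^{\op} \to \Dop$ is the \gnsiopd{} projection and not some unrelated map --- and this is settled exactly as in \cref{lem:wLopgnsiopd}.
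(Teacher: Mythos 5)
Your proposal is correct and matches the paper's (implicit) reasoning exactly: the observation is a direct unwinding of the universal property $\Fun_{/\Dop}(\uK, \oMorL(\uc{C})) \simeq \Fun_{/\Dop}(\uK \times_{\Dop} \Arc(\simp)^{\op}, \uc{C})$ with $\uK = \{[n]\}$, together with the identification of the fibre of $\ev^{c}_{1}$ over $[n]$ with $\bbL_{/[n]}$. The paper offers no further argument, and your care with the opposites and with identifying the residual projection to $\Dop$ as the \gnsiopd{} structure map is exactly the right bookkeeping.
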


\begin{defn}
  Suppose $\uV$ is a monoidal \icat{}. We define $\MorL(\uV)$ to be the full subcategory of $\oMorL(\uV)$ spanned by the $\bbL_{/[n]}^{\op}$-algebras for all $[n]$.
\end{defn}

\begin{lemma}
  For any functor $\uc{K} \to \Dop$, there is a natural equivalence
  \[ \Fun_{/\Dop}(\uc{K}, \MorL(\uV)) \simeq \Alg_{\uc{K}\times_{\Dop} \wLop}(\uV).\]
\end{lemma}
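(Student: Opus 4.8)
The plan is to mirror the proof of \cref{propn:oMorftrisalg}, now restricting the defining equivalence for $\oMorL$,
\[ \Fun_{/\Dop}(\uc{K}, \oMorL(\uV)) \simeq \Fun_{/\Dop}(\uc{K} \times_{\Dop} \wLop, \uV),\]
to the full subcategory $\MorL(\uV) \subseteq \oMorL(\uV)$ on the left and to $\Alg_{\uc{K} \times_{\Dop} \wLop}(\uV)$ on the right. First I would observe that a functor $\uc{K} \to \oMorL(\uV)$ factors through $\MorL(\uV)$ exactly when it sends each object $x \in \uc{K}$, say over $[n]$, into $\MorL(\uV)$; by the definition of $\MorL(\uV)$ this means the corresponding functor on the fibre $\Lopn \simeq \{x\} \times_{\Dop} \wLop \to \uV$ is a $\Lopn$-algebra, \ie{} preserves inert morphisms.

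Next I would transport this fibrewise condition across the equivalence above. A functor $G \colon \uc{K} \times_{\Dop} \wLop \to \uV$ corresponds to a functor landing in $\MorL(\uV)$ precisely when, for every $x \in \uc{K}$, the restriction $G|_{\{x\} \times_{\Dop} \wLop}$ preserves inert morphisms. The goal is then to show that this is equivalent to $G$ being an algebra for the \gnsiopd{} $\uc{K} \times_{\Dop} \wLop$ of \cref{lem:wLopgnsiopd}, \ie{} to $G$ preserving all the inert morphisms of that \gnsiopd{}.

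The crux is therefore to identify the inert morphisms of $\uc{K} \times_{\Dop} \wLop$. Here I would invoke the pullback square of \gnsiopds{} supplied by \cref{lem:wLopgnsiopd}: exactly as for $\Ar(\simp)^{\op}$ in \cref{propn:oMorftrisalg}, the $\ev_{0}^{\op}$-cocartesian lifts of inert maps in $\wLop$ have invertible $\ev_{1}^{\op}$-image, so a morphism of the pullback is inert precisely when its image in $\uc{K}$ is an equivalence and its image in $\wLop$ is inert. Consequently the inert morphisms of $\uc{K} \times_{\Dop} \wLop$ are exactly the images of inert morphisms in the fibres $\Lopn$ over objects of $\uc{K}$. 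Preservation of all inert morphisms by $G$ then unwinds to the fibrewise preservation condition above, yielding the claimed equivalence; it is natural because every step in the identification is.

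The step I expect to be the main obstacle is this identification of inert morphisms in the pullback, precisely because $\wLop$ is merely a \gnsiopd{} over $\Dop$ rather than a double \icat{}, so there is no ambient cocartesian fibration structure and one must work directly with the cocartesian lifts of inert maps provided by the \gnsiopd{} structure. The saving grace is that \cref{lem:wLopgnsiopd} already packages the relevant pullback square of \gnsiopds{}, so the argument that an inert morphism of the pullback has equivalence image in $\uc{K}$ goes through verbatim as in the double \icat{} case of \cref{propn:oMorftrisalg}.
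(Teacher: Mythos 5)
Your proposal is correct and follows exactly the route the paper intends: its proof of this lemma is literally the single line ``Same as \cref{propn:oMorftrisalg},'' and your write-up is a faithful unwinding of that earlier argument, with the pullback square from \cref{lem:wLopgnsiopd} playing the role of \cref{eq:ardoppbsq} to identify the inert morphisms of $\uc{K} \times_{\Dop} \wLop$ as those projecting to equivalences in $\uc{K}$ and to inert morphisms in the fibre $\Lopn$.
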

\begin{proof}
  Same as \cref{propn:oMorftrisalg}.
\end{proof}

\begin{notation}
  For a morphism $\phi \colon [n] \to [m]$ in $\simp$, we write
  $\wLop_{\phi}$ for the \gnsiopd{} $[1] \times_{\Dop} \wLop$ obtained
  by pullback along $[1] \xto{\phi} \Dop$. Taking the fibres over $0,1 \in [1]$ gives fully faithful inclusions
  \[ \Lop_{/[m]} \xto{u_{\phi}} \wLop_{\phi} \xfrom{v_{\phi}} \Lop_{/n}.\]
\end{notation}

\begin{lemma}
  For any morphism $\phi$, the inclusion
  \[ u_{\phi} \colon \Lop_{/[m]} \hookrightarrow \wLop_{\phi}\]
  is an extendable morphism of \gnsiopds{}.
\end{lemma}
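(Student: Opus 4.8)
The plan is to reduce the extendability of $u_{\phi}$ to that of the inclusion $j_{\phi} \colon \Lop_{/[m]} \hookrightarrow \Lop_{/[m]}[\phi]$, which was already established in the lemma above. The bridge I would use is a comparison functor $\Theta \colon \wLop_{\phi} \to \Lop_{/[m]}[\phi]$: on the fibre over $0 \in [1]$, where $\wLop_{\phi}$ is $\Lop_{/[m]}$, it is the inclusion $j_{\phi}$; on the fibre over $1$, where $\wLop_{\phi}$ is $\Lop_{/[n]}$, it is the functor $\phi_{\bbL}$ of \cref{eq:phicellsq} that sends a cellular $\chi \colon [k] \to [n]$ to the $\phi$-cellular composite $\phi\chi \colon [k] \to [m]$. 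Unwinding the morphisms of $\wLop_{\phi} = [1] \times_{\Dop} \wLop$ lying over $0 \to 1$ --- which are squares in $\Arc(\simp)$ whose bottom edge is $\phi$ --- shows that $\Theta$ is a well-defined morphism of \gnsiopds{} with $\Theta u_{\phi} = j_{\phi}$.

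With $\Theta$ in hand, I would recall that (for \gnsiopds{} over $\Dop$) \emph{extendability} of a morphism $f \colon \uc{O} \to \uc{P}$ amounts to the condition that, for every $p \in \uc{P}$, the comparison
\[ \uc{O} \times_{\uc{P}} \uc{P}^{\act}_{/p} \longrightarrow \lim_{e} \, \uc{O}\times_{\uc{P}}\uc{P}^{\act}_{/e} \]
from the active slice over $p$ to the limit, over the elementary objects $e$ of a Segal cover of $p$, of the active slices over the $e$ be coinitial (see \cite{patterns2}). The heart of the argument is then to check that $\Theta$ carries the active slice $\Lop_{/[m]} \times_{\wLop_{\phi}} (\wLop_{\phi})^{\act}_{/p}$ isomorphically onto $\Lop_{/[m]} \times_{\Lop_{/[m]}[\phi]} (\Lop_{/[m]}[\phi])^{\act}_{/\Theta(p)}$, compatibly with passage to edges and vertices. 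Concretely, an active morphism into $p$ from an object $X$ of the fibre over $0$ is a square in $\Arc(\simp)$ with active top edge and bottom edge $\id_{[m]}$ (if $p$ lies in the fibre over $0$) or $\phi$ (if $p$ lies in the fibre over $1$); in either case this is exactly the datum of an active morphism $\Theta(p) \to X$ in $\simp_{/[m]}$ with cellular target, i.e.\ of an object of the active slice of $\Theta(p)$ in $\Lop_{/[m]}[\phi]$ whose source lies in $\Lop_{/[m]}$.

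Granting this identification, the coinitiality required for $u_{\phi}$ at $p$ is precisely the coinitiality required for $j_{\phi}$ at $\Theta(p)$, and so it holds for every $p$ because $j_{\phi}$ is extendable; this gives the extendability of $u_{\phi}$ and, in passing, computes its operadic left Kan extension through $\Theta$. The main obstacle is the comma-category identification of the previous paragraph: I expect the real work to be the bookkeeping needed to track the $\phi$-twist in the bottom edges of the relevant $\Arc(\simp)$-squares, together with the several opposite categories involved ($\wLop = \Arc(\simp)^{\op}$ and $\Lop_{/[m]} = \bbL_{/[m]}^{\op}$), and to verify that the crossing active morphisms of $\wLop_{\phi}$ correspond to genuine active morphisms in $\Lop_{/[m]}[\phi]$ while $\Theta$ sends the Segal cover of $p$ to that of $\Theta(p)$. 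This is the same combinatorial input that underlies \cref{lem:Lphicoinit} and the proof of \cref{propn:phicelllke}, and once it is in place no new geometric ingredient is needed.
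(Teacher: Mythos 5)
Your argument is correct, but it takes a genuinely different route from the paper. The paper's proof invokes the general fibre-product decomposition of active slice \icats{} for a morphism of \gnsiopds{} (\cite{patterns1}*{Proposition 9.15}) to reduce extendability of $u_{\phi}$ to the claim that $(\Lop_{/[m]})^{\act}_{/X}$ is contractible for every $X$ in the fibre of $\wLop_{\phi}$ over $[0]$; this is then immediate, since for $X$ over $[0]$ the activity of the relevant edge forces the source to be $[0]$ as well, so the slice is a single point. You instead transport the extendability condition along the comparison functor $\Theta = q_{\phi} \colon \wL_{\phi} \to \bbL_{/[m]}[\phi]$ and quote the extendability of $j_{\phi}$ from the preceding lemma; this works, and note that the surjectivity of $q_{\phi}$ is irrelevant since you only need the condition for $j_{\phi}$ at objects of the form $q_{\phi}(X)$. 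The key identification you rely on --- that $\Theta$ induces an isomorphism $(\bbL_{/[m]})^{\act}_{X/} \cong (\bbL_{/[m]})^{\act}_{q_{\phi}(X)/}$, because a square over $\phi$ with fixed left edge $\beta$ carries exactly the same data $(f,\alpha)$ with $\alpha f = \phi\beta$ as the corresponding triangle over $[m]$ --- is in fact verified in the paper, but only later, in the proof that $\Mor(\uV) \to \MorL(\uV)$ preserves cocartesian morphisms (where it yields $u_{\phi,!} \simeq q_{\phi}^{*}j_{\phi,!}$). So your approach is sound and effectively unifies the present lemma with that later verification; the trade-off is that you must construct $q_{\phi}$ and carry out the slice comparison up front, whereas the paper's degree-zero reduction requires no auxiliary functor and its remaining check is essentially trivial.
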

\begin{proof}
  We must show that for an object $X$ of $\wLop_{\phi}$ over $[n]$, the canonical functor
  \[ (\Lop_{/[m]})^{\act}_{/X} \to \prod_{i=1}^{n}(\Lop_{/[m]})^{\act}_{/X_{(i-1)i}} \]
  is cofinal, where $X \to X_{(i-1)i}$ is cocartesian over $\{i,i+1\} \hookrightarrow [n]$.
  But for any morphism of \gnsiopds{} $f \colon \uc{O} \to \uc{P}$ and $P \in \uc{P}$ over $[n]$, we have an equivalence
  \[ \uc{O}^{\act}_{/P} \to \uc{O}^{\act}_{/P_{1}} \times_{\uc{O}^{\act}_{/P_{11}}} \cdots \times_{\uc{O}^{\act}_{/P_{n-1}}}
    \uc{O}^{\act}_{/P_{(n-1)n}} \]
  where $P \to P_{i}$ is cocartesian over $\{i\} \hookrightarrow [n]$ and $P \to P_{i(i+1)}$ over $\{i,i+1\} \hookrightarrow [n]$ \cite{patterns1}*{Proposition 9.15}. To show that $f$ is extendable it therefore suffices to prove that the \icat{} $\uc{O}^{\act}_{/P} \simeq \uc{O}^{\act}_{0/P}$ is contractible for every $P \in \uc{P}_{0}$. In our case $X$ is either a cellular map $\beta \colon [a] \to [m]$, in which case
  $(\Lop_{/[m]})^{\act}_{/X}$ consists of triangles
  \[
    \begin{tikzcd}
      {[a]} \arrow{rr}{\alpha} \arrow{dr}[swap]{\beta} & & {[b]} \arrow{dl}{\gamma} \\
       & {[m]},
    \end{tikzcd}
  \]
  or a cellular map $\alpha \colon [a] \to [n]$, in which case
    $(\Lop_{/[m]})^{\act}_{/X}$ consists of squares
  \[
    \begin{tikzcd}
      {[a]} \arrow{r}{\alpha} \arrow{d}{\beta} & {[b]} \arrow{d}{\gamma} \\
      {[n]} \arrow{r}{\phi} & {[m]},
    \end{tikzcd}
  \]
  where in both cases $\alpha$ is active and $\gamma$ is cellular. For
  $a = 0$, the condition that $\alpha$ is active forces $b = 0$, and
  then $\gamma$ is uniquely determined.
\end{proof}

\begin{thm}[Lurie]\label{thm:MorLur}
  Suppose $\uV$ is a monoidal \icat{} compatible with simplicial colimits. Then $\MorL(\uV) \to \Dop$ is a cocartesian fibration, and the cocartesian transport functor over $\phi \colon [n] \to [m]$ in $\simp$ is given by the composite
  \[ \Alg_{\Lop_{/[m]}}(\uV) \xto{u_{\phi,!}} \Alg_{\wLop_{\phi}}(\uV) \xto{v_{\phi}^{*}} \Alg_{\Lopn}(\uV) \]
  of operadic left Kan extension along $u_{\phi}$ and restriction along $v_{\phi}$. If $\phi$ is inert then this is equivalently just restriction along the induced map $\Lop_{/[n]} \to \Lop_{/[m]}$.
\end{thm}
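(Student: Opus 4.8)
The plan is to verify the cocartesian fibration property one edge of $\Dop$ at a time, using the representability of $\MorL(\uV)$ by $\wLop$-algebras together with the extendability of $u_{\phi}$ just established. Fix $\phi \colon [n] \to [m]$ in $\simp$, let $e_{\phi} \colon [1] \to \Dop$ classify the corresponding edge, and recall that pulling $\wLop$ back along $e_{\phi}$ yields $\wLop_{\phi}$. Applying the identification $\Fun_{/\Dop}(\uc{K}, \MorL(\uV)) \simeq \Alg_{\uc{K} \times_{\Dop} \wLop}(\uV)$ with $\uc{K} = [1]$ over $e_{\phi}$ gives a natural equivalence $\Fun_{/\Dop}([1], \MorL(\uV)) \simeq \Alg_{\wLop_{\phi}}(\uV)$; that is, the \icat{} of edges of $\MorL(\uV)$ lying over $\phi$ is identified with $\Alg_{\wLop_{\phi}}(\uV)$, compatibly with the two restriction-to-endpoint functors, which under this identification are exactly $u_{\phi}^{*}$ (to the source fibre $\Alg_{\Lop_{/[m]}}(\uV)$) and $v_{\phi}^{*}$ (to the target fibre $\Alg_{\Lopn}(\uV)$).

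The key point is then the general criterion that the restriction $\MorL(\uV) \times_{\Dop} [1] \to [1]$ over $e_{\phi}$ is a cocartesian fibration precisely when the source-restriction functor $u_{\phi}^{*} \colon \Alg_{\wLop_{\phi}}(\uV) \to \Alg_{\Lop_{/[m]}}(\uV)$ admits a fully faithful left adjoint: the cocartesian edges are then the objects in the essential image of this adjoint, and the cocartesian transport is obtained by following it with the target-restriction $v_{\phi}^{*}$. Concretely, a fully faithful left adjoint $L$ to $u_{\phi}^{*}$ sends $x$ to an edge $x \to v_{\phi}^{*}Lx$ whose universal property among edges with fixed source is exactly $p$-cocartesianness. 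I would supply the required adjoint by operadic left Kan extension: the preceding lemma shows $u_{\phi}$ is an extendable morphism of \gnsiopds{}, so since $\uV$ is compatible with simplicial colimits the restriction $u_{\phi}^{*}$ has a left adjoint $u_{\phi,!}$ given by operadic left Kan extension along $u_{\phi}$; as $u_{\phi}$ is the inclusion of a fibre, hence fully faithful, the unit is an equivalence $u_{\phi}^{*}u_{\phi,!} \simeq \id$, so $u_{\phi,!}$ is fully faithful. This yields cocartesian lifts over every $\phi$ and identifies the transport with $v_{\phi}^{*}u_{\phi,!}$, as claimed.

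What remains --- and what I expect to be the main obstacle --- is to upgrade these edgewise lifts to a genuine cocartesian fibration over all of $\Dop$, \ie{} to check that the locally cocartesian edges produced above are closed under composition. Concretely this amounts to the Beck--Chevalley compatibility $v_{\psi}^{*}u_{\psi,!}\circ v_{\phi}^{*}u_{\phi,!} \simeq v_{\psi\phi}^{*}u_{\psi\phi,!}$ for composable $\phi, \psi$, which I would deduce from the behaviour of the $\wLop_{\phi}$ inside the global fibration $\wLop \to \Dop$ together with a coinitiality statement for the relevant active slice categories, in the spirit of \cref{lem:Lphicoinit} and the analogous analysis of $\Mor(\uV)$ in \cref{cor:Morcocart} and \cref{obs:cocartinmor}. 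Alternatively, one could hope to invoke a general result producing the whole cocartesian fibration at once from the exponentiable fibration $\ev^{c}_{1}$ together with the extendability data, rather than assembling it by hand.

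Finally, for the inert case I would argue as follows. If $\phi$ is inert then composition with $\phi$ carries cellular maps to cellular maps, and hence induces a functor $\Lopn \to \Lop_{/[m]}$. To see that $v_{\phi}^{*}u_{\phi,!}$ agrees with restriction along this functor, I would check that for inert $\phi$ the active slice categories governing the operadic left Kan extension $u_{\phi,!}$, evaluated at objects in the image of $v_{\phi}$, have initial objects and are therefore contractible; the colimit defining $u_{\phi,!}$ then collapses, and its restriction along $v_{\phi}$ reduces to precomposition with the induced map. This is again a coinitiality computation of the same flavour as \cref{lem:Lphicoinit}, now specialized to an inert $\phi$.
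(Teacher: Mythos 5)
Your edgewise analysis is sound and correctly reconstructs the local part of the argument: the \icat{} of edges of $\MorL(\uV)$ over $\phi$ is indeed $\Alg_{\wLop_{\phi}}(\uV)$ with source and target restrictions $u_{\phi}^{*}$ and $v_{\phi}^{*}$; a fully faithful left adjoint to the source restriction does produce locally cocartesian lifts whose transport is $v_{\phi}^{*}u_{\phi,!}$; and the extendability lemma preceding the theorem, together with compatibility with simplicial colimits, supplies $u_{\phi,!}$, which is fully faithful because $u_{\phi}$ is the inclusion of a fibre. (For context: the paper itself proves none of this --- the theorem is attributed to Lurie and established by citing \cite{HA}*{Lemma 4.4.3.9(1)} for the fibration property, \cite{HA}*{Corollary 4.4.3.2} for the transport, and \cite{HA}*{Lemma 4.4.3.9(2)} for the inert case.)

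The genuine gap is precisely the step you flag as ``the main obstacle'' and then do not carry out: showing that the locally cocartesian edges are closed under composition, \ie{} that $v_{\psi}^{*}u_{\psi,!}\circ v_{\phi}^{*}u_{\phi,!}$ agrees with the local transport for the composite of $\phi$ and $\psi$. This is not a routine Beck--Chevalley verification; it is where the associativity of the relative tensor product lives, and it is the substantive content of \cite{HA}*{Lemma 4.4.3.9(1)}. In the paper's parallel construction of $\Mor(\uV)$ the corresponding work is exactly \cref{lem:Lphicoinit} and \cref{propn:phicelllke}, which rest on a nontrivial coinitiality statement for active slice categories proved in \cite{nmorita}; saying you would argue ``in the spirit of'' these results is a plan, not a proof, so as written the cocartesian fibration claim is unestablished. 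A smaller imprecision: in the inert case you assert that the relevant active slices ``have initial objects and are therefore contractible'', so that ``the colimit collapses'' --- but weak contractibility of the indexing category does not collapse a colimit to the value at a chosen object; you need that object to be terminal in the slice (equivalently, that its inclusion is cofinal), which is what \cite{HA}*{Lemma 4.4.3.9(2)} actually checks.
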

\begin{proof}
  Under the given assumption this functor is a cocartesian fibration by \cite{HA}*{Lemma 4.4.3.9(1)} and the description of cocartesian morphisms follows in general from \cite{HA}*{Corollary 4.4.3.2} and in the inert case from \cite{HA}*{Lemma 4.4.3.9(2)}.
\end{proof}

Combining this with the Segal condition from \cref{cor:Segcond} or \cite{HA}*{Proposition 4.4.1.11}, we have:
\begin{cor}
  Suppose $\uV$ is a monoidal \icat{} compatible with simplicial colimits. Then $\MorL(\uV) \to \Dop$ is a double \icat{}. \qed
\end{cor}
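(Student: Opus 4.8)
The plan is to unwind what it means for the cocartesian fibration $\MorL(\uV) \to \Dop$ from \cref{thm:MorLur} to be a double \icat{}, and then recognize the resulting condition as an instance of the Segal condition we have already established. Recall that a double \icat{} is precisely a cocartesian fibration over $\Dop$ whose straightening $\Dop \to \CatI$ satisfies the Segal condition; concretely, writing $\mathcal{E} := \MorL(\uV)$, this means that the functor
\[ \mathcal{E}_{[n]} \to \mathcal{E}_{[1]} \times_{\mathcal{E}_{[0]}} \cdots \times_{\mathcal{E}_{[0]}} \mathcal{E}_{[1]} \]
induced by cocartesian transport along the inert maps $\{i-1,i\} \hookrightarrow [n]$ and $\{i\} \hookrightarrow [n]$ in $\simp$ should be an equivalence for every $n$. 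Since \cref{thm:MorLur} already gives us that $\mathcal{E} \to \Dop$ is a cocartesian fibration, this Segal condition is the only thing left to check.

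First I would identify the fibers and the transport maps. The fiber of $\mathcal{E}$ over $[n]$ is $\Alg_{\Lopn}(\uV)$, and since every morphism $[k] \to [1]$ (resp.\ $[k] \to [0]$) is automatically cellular, we have $\Lop_{/[1]} = \Dop_{/[1]}$ and $\Lop_{/[0]} = \Dop$, so that the fibers over $[1]$ and $[0]$ are $\Bimod(\uV)$ and $\Alg(\uV)$ respectively. Moreover, by the last sentence of \cref{thm:MorLur} the cocartesian transport over an inert map is simply restriction along the induced inclusion of \gnsiopds{}. Hence the Segal map above is exactly the restriction functor
\[ j_{n,\Seg}^{*} \colon \Alg_{\Lopn}(\uV) \to \Bimod(\uV) \times_{\Alg(\uV)} \cdots \times_{\Alg(\uV)} \Bimod(\uV) = \Alg^{\Seg}_{\Lopn}(\uV) \]
appearing in \cref{cor:Segcond}.

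With this identification in hand, the equivalence we need is precisely the content of \cref{cor:Segcond} (equivalently Lurie's \cite{HA}*{Proposition 4.4.1.11}), so the result follows. The only genuine point to verify is this matching of data: that the inert maps relevant to the Segal condition for double \icats{} are the interval inclusions $\{i-1,i\}, \{i\} \hookrightarrow [n]$, and that the cocartesian transport of \cref{thm:MorLur} along them agrees with the restriction functors used to build $j_{n,\Seg}^{*}$. Once these are matched there is no further computation to do, since \cref{cor:Segcond} supplies the equivalence directly.
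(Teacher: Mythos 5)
Your proposal is correct and follows exactly the paper's (very terse) argument: the paper likewise obtains the corollary by combining \cref{thm:MorLur} (cocartesian fibration, with inert transport given by restriction) with the Segal condition of \cref{cor:Segcond} or \cite{HA}*{Proposition 4.4.1.11}. Your identifications of the fibres ($\Lop_{/[0]} = \Dop$ and $\Lop_{/[1]} = \Dop_{/[1]}$ since every map to $[0]$ or $[1]$ is cellular) and of the Segal map with $j_{n,\Seg}^{*}$ are exactly the bookkeeping the paper leaves implicit.
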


\begin{construction}
  The inclusion $\Arc(\simp) \hookrightarrow \Ar(\simp)$ induces for any $\uc{C}$ over $\Dop$ a functor
  \begin{equation}
    \label{eq:morfunpre}
  \oMor'(\uc{C}) \to \oMorL(\uc{C})  
  \end{equation}
  over $\Dop$.
\end{construction}

\begin{propn}
  Suppose $\uc{V}$ is a monoidal \icat{} compatible with simplicial colimits. Then the functor \cref{eq:morfunpre} restricts to a functor
  \[ \Mor(\uc{V}) \to \MorL(\uc{V})\]
  over $\Dop$, which moreover preserves cocartesian morphisms. 
\end{propn}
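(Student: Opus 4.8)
The plan is to first check that the functor \cref{eq:morfunpre} lands in $\MorL(\uV)$ and is a fibrewise equivalence, and then to reduce the statement about cocartesian morphisms to a comparison of the two explicit cocartesian transport functors. On the fibre over $[n]$, the functor \cref{eq:morfunpre} is restriction along the morphism of \gnsiopds{} $i_{n} \colon \Lopn \hookrightarrow \Dopn$ of \cref{defn:cellular}; since restriction along a morphism of \gnsiopds{} carries algebras to algebras, it sends $\oMor(\uV)$ into $\MorL(\uV)$, and in particular $\Mor(\uV)$ into $\MorL(\uV)$. Write $G \colon \Mor(\uV) \to \MorL(\uV)$ for the resulting functor over $\Dop$. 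By \cref{propn:compalgs} the restriction $i_{n}^{*}$ is an equivalence $\Algc_{\Dopn}(\uV) \xto{\sim} \Alg_{\Lopn}(\uV)$ on each fibre, so $G$ is a fibrewise equivalence.

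Both $\Mor(\uV)$ and $\MorL(\uV)$ are cocartesian fibrations over $\Dop$, by \cref{cor:Morcocart} and \cref{thm:MorLur} respectively. For a functor over $\Dop$ between cocartesian fibrations, preservation of cocartesian morphisms is equivalent to invertibility of the Beck--Chevalley comparison between the cocartesian transport functors over each $\phi \colon [n] \to [m]$ in $\simp$. Under the fibrewise identifications just described, \cref{obs:cocartinmor} identifies the $\Mor(\uV)$-transport with $\phi_{\bbL}^{*} j_{\phi,!}$ and \cref{thm:MorLur} identifies the $\MorL(\uV)$-transport with $v_{\phi}^{*} u_{\phi,!}$. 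It therefore suffices to produce a natural equivalence
\[ \phi_{\bbL}^{*} j_{\phi,!} \simeq v_{\phi}^{*} u_{\phi,!} \colon \Alg_{\Lop_{/[m]}}(\uV) \to \Alg_{\Lopn}(\uV). \]

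Both sides are of the form ``operadic left Kan extension from $\Lop_{/[m]}$, followed by restriction to $\Lopn$'', through the intermediate \gnsiopds{} $\Lop_{/[m]}[\phi]$ and $\wLop_{\phi}$ respectively. Evaluating the operadic left Kan extension formula, the value of each side at a cellular map $\psi \colon [k] \to [n]$ is a colimit, over an \icat{} of active morphisms, of the corresponding tensor products of the values of the input algebra on cellular maps into $[m]$. To identify these two colimits I would first produce a morphism of \gnsiopds{} $G_{\phi} \colon \wLop_{\phi} \to \Lop_{/[m]}[\phi]$ over $\Dop$, sending a cellular map $c \colon [a] \to [m]$ in the fibre over $0 \in [1]$ to itself, a cellular map $c \colon [a] \to [n]$ in the fibre over $1 \in [1]$ to the $\phi$-cellular map $\phi c$, and a ``mixed'' morphism (a commuting square over $\phi$) to its top horizontal edge; by construction $G_{\phi} u_{\phi} = j_{\phi}$ and $G_{\phi} v_{\phi} = \phi_{\bbL}$, and this matches up the tensor-product diagrams on the two sides. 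It then remains to show that the two indexing \icats{} of active morphisms --- the one in $\Lop_{/[m]}[\phi]$ over $\phi\psi$ and the one in $\wLop_{\phi}$ over $v_{\phi}(\psi)$ --- both receive a coinitial functor from $(\bbLn^{\act})_{\psi/}$ compatibly with these diagrams, which forces the two colimits to agree.

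The main obstacle is precisely this coinitiality comparison. For $\Lop_{/[m]}[\phi]$ it is exactly \cref{lem:Lphicoinit}; for $\wLop_{\phi}$ I expect the analogous statement to follow by the same method, factoring $\phi$ as a surjection followed by an inert map and treating those two cases separately, and reading off the relevant active slices from the computation of $(\Lop_{/[m]})^{\act}_{/X}$ already carried out in the proof that $u_{\phi}$ is extendable. The inert case is the simplest, since there both transport functors reduce to honest restriction --- the last clause of \cref{thm:MorLur} for Lurie's side --- so that the nontrivial content, namely the comparison of relative tensor products, is concentrated in the active (surjective) case handled by \cref{lem:Lphicoinit}.
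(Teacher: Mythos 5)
Your setup is right and matches the paper's: the functor is fibrewise restriction along $i_{n} \colon \Lopn \to \Dopn$, which carries algebras to algebras, and the cocartesian claim reduces to comparing the transport $\phi_{\bbL}^{*}j_{\phi,!}$ from \cref{obs:cocartinmor} with $v_{\phi}^{*}u_{\phi,!}$ from \cref{thm:MorLur}. You also correctly identify the key comparison object: your $G_{\phi} \colon \wLop_{\phi} \to \Lop_{/[m]}[\phi]$ is exactly the functor $q_{\phi}$ the paper uses, with $q_{\phi}u_{\phi} = j_{\phi}$ and $q_{\phi}v_{\phi} = \phi_{\bbL}$. The gap is in how you finish. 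First, \cref{lem:Lphicoinit} does not do what you want: its target $(\bbL_{/[m]}[\phi])^{\act}_{\phi\psi/}$ consists of active maps from $\phi\psi$ to \emph{$\phi$-cellular} maps, which is the indexing category for the left Kan extension from $\Lop_{/[m]}[\phi]$ to $\Dop_{/[m]}$ appearing in \cref{propn:phicelllke} --- not the indexing category for $j_{\phi,!}$, which is the slice over \emph{cellular} maps $(\bbL_{/[m]})^{\act}_{\phi\psi/}$. Second, the ``compatibility with the diagrams'' you assert for a common coinitial source $(\bbLn^{\act})_{\psi/}$ is not even well-defined as stated: its objects are cellular maps over $[n]$, and composing with $\phi$ need not produce cellular maps over $[m]$, so there is no evident functor to either indexing category matching the tensor-product diagrams. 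Third, the genuinely new coinitiality statement you would need for $\wLop_{\phi}$ is exactly the nontrivial content, and ``I expect the analogous statement to follow by the same method'' is not a proof.

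The fix is simpler than what you are attempting, and avoids coinitiality entirely. It suffices to show $u_{\phi,!} \simeq q_{\phi}^{*}j_{\phi,!}$; restricting along $v_{\phi}$ then gives $v_{\phi}^{*}u_{\phi,!} \simeq (q_{\phi}v_{\phi})^{*}j_{\phi,!} = \phi_{\bbL}^{*}j_{\phi,!}$. By the operadic left Kan extension formula, this reduces to checking that for each $X \in \wL_{\phi}$ the functor $(\bbL_{/[m]})^{\act}_{X/} \to (\bbL_{/[m]})^{\act}_{q_{\phi}(X)/}$ induced by $q_{\phi}$ is an equivalence of categories. For $X$ in the fibre over $0$ this is an identity, and for $X = \beta \colon [b] \to [n]$ in the fibre over $1$ an object of the source is a commutative square over $\phi$ with active top edge $[b] \to [a]$ and cellular right edge $\alpha \colon [a] \to [m]$, which is precisely the same data as a triangle over $[m]$ exhibiting an active map $\phi\beta \to \alpha$; so the two categories are isomorphic and the colimits agree on the nose. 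I recommend replacing your final two paragraphs with this direct check.
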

\begin{proof}
  On fibres, the functor is given by restriction along
  $i_{n} \colon \Lop_{/[n]} \to \Dop_{/[n]}$. Since this preserves
  algebras, the functor restricts as required. To see that the
  restriction preserves cocartesian morphisms, we use the description
  of these in \cref{obs:cocartinmor} and \cref{thm:MorLur}. For
  $\phi \colon [n] \to [m]$ in $\simp$, there is a canonical functor
  $q_{\phi} \colon \wL_{\phi} \to \bbL_{/[m]}[\phi]$ (given by the identity on the image of $\bbL_{/[m]}$ and by composition with $\phi$ on the image of $\bbL_{/[n]}$ in $\wL_{\phi}$), which fits in a commutative diagram
  \[
    \begin{tikzcd}
      {} & \wL_{\phi}  \arrow{dd}{q_{\phi}}  \\
      \bbL_{/[m]} \arrow[hook]{ur}{u_{\phi}} \arrow{dr}[swap]{j_{\phi}} & & \bbL_{/[n]} \arrow[hook']{ul}[swap]{v_{\phi}} \arrow[hook']{dl}{\phi_{\Lambda}} \\
       & \bbL_{/[m]}[\phi].
    \end{tikzcd}
  \]
  It suffices to check that on algebras this induces a commutative triangle
  \[
    \begin{tikzcd}
      \Alg_{\Lop_{/[m]}}(\uV) \arrow{r}{j_{\phi,!}} \arrow{dr}[swap]{u_{\phi,!}} & \Alg_{\Lop_{/[m]}[\phi]}(\uV) \arrow{d}{q_{\phi}^{*}} \\
       & \Alg_{\wLop_{\phi}}(\uV).
    \end{tikzcd}
  \]
  In other words, we must show that the canonical map
  $u_{\phi,!} \to q_{\phi}^{*}j_{\phi,!}$ is an equivalence. Using the formula for operadic left Kan extensions, we see that to prove this it suffices to check that for every $X \in \wL_{\phi}$, the functor
  \[ (\bbL_{/[m]})^{\act}_{X/} \to (\bbL_{/[m]})^{\act}_{q(X)/}\]
  is an equivalence of categories. Here the object $X$ is a cellular map that is either of the form $\alpha \colon [a] \to [m]$ or of the form $\beta \colon [b] \to [n]$. In the first case $q(X)$ is again $\alpha$, and both categories are just $(\bbL_{/[m]})^{\act}_{\alpha/}$. In the second case an object of $(\bbL_{/[m]})^{\act}_{X/}$ is a diagram
  \[
    \begin{tikzcd}
      {[b]} \arrow{r} \arrow{d}{\beta} & {[a]} \arrow{d}{\alpha} \\
      {[n]} \arrow{r}{\phi} & {[m]}
    \end{tikzcd}
  \]
  where the top horizontal map is active and $\alpha$ is cellular, while an object of $(\bbL_{/[m]})^{\act}_{q(X)/}$ is a triangle
    \[
    \begin{tikzcd}
      {[b]} \arrow{rr} \arrow{dr}{\phi\beta} & & {[a]} \arrow{dl}{\alpha} \\
       &  {[m]}
    \end{tikzcd}
  \]
  where $\alpha$ is cellular and the horizontal map is active.
\end{proof}

\begin{cor}
  If $\uc{V}$ is a monoidal \icat{} compatible with simplicial colimits, then the functor $\Mor(\uc{V}) \to \MorL(\uc{V})$ is an equivalence.
\end{cor}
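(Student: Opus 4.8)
The plan is to use that both $\Mor(\uV) \to \Dop$ and $\MorL(\uV) \to \Dop$ are cocartesian fibrations (by \cref{cor:Morcocart} and \cref{thm:MorLur}, respectively), together with the fact, just established in the previous proposition, that the comparison functor lies over $\Dop$ and preserves cocartesian morphisms. For such a map of cocartesian fibrations it suffices to check that it is an equivalence on the fibre over each $[n] \in \simp$: a functor of cocartesian fibrations over a common base that preserves cocartesian morphisms and is a fibrewise equivalence is itself an equivalence (via straightening, the induced natural transformation of functors $\Dop \to \CatI$ is then an objectwise, hence a natural, equivalence).

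First I would identify the fibrewise functor. Over $[n]$ the fibre of $\Mor(\uV)$ is $\Algc_{\Dopn}(\uV)$ and that of $\MorL(\uV)$ is $\Alg_{\Lopn}(\uV)$, and — as recorded in the first line of the preceding proof — the comparison functor restricts to the map given by restriction along $i_{n} \colon \Lopn \to \Dopn$,
\[ i_{n}^{*} \colon \Algc_{\Dopn}(\uV) \to \Alg_{\Lopn}(\uV).\]
To see that this is an equivalence I would appeal to \cref{propn:compalgs}: there the operadic left Kan extension $i_{n,!}$ is exhibited as a fully faithful left adjoint to $i_{n}^{*}$ whose essential image is exactly the composite algebras $\Algc_{\Dopn}(\uV)$. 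Hence $i_{n,!}$ is an equivalence of $\Alg_{\Lopn}(\uV)$ onto this full subcategory, and the restriction of $i_{n}^{*}$ to $\Algc_{\Dopn}(\uV)$ is an inverse to it.

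Since the substantive work — verifying that the comparison functor preserves cocartesian morphisms — was already carried out in the preceding proposition, and the fibrewise equivalence is immediate from \cref{propn:compalgs}, no serious obstacle remains. The only point requiring a little care is matching the two descriptions of the fibres and confirming that the comparison functor really does restrict to $i_{n}^{*}$ on them; but this is precisely what was observed at the start of the previous proof, so the argument reduces to assembling these two inputs.
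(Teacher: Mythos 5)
Your argument is correct and matches the paper's proof exactly: both reduce to checking the comparison on fibres (using that it is a morphism of cocartesian fibrations over $\Dop$) and identify the fibrewise functor as the restriction $\Algc_{\Dopn}(\uV) \to \Alg_{\Lopn}(\uV)$, which is an equivalence by \cref{propn:compalgs}. You have simply spelled out the details the paper leaves implicit.
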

\begin{proof}
  We have shown that this functor is a morphism of cocartesian fibrations over $\Dop$, so it suffices to show that it is an equivalence on each fibre. But over $[n] \in \Dop$ we have the restriction $\Algc_{\Dopn}(\uV) \isoto \Alg_{\Lopn}(\uV)$.
\end{proof}

\begin{remark}
  An advantage of Lurie's definition over that of \cite{nmorita} is that it is easy to see that any lax monoidal functor $\uV \to \uW$ induces a lax functor of double \icats{} $\MorL(\uV) \to \MorL(\uW)$ (\ie{} a functor over $\Dop$ that preserves inert cocartesian morphisms). For $\Mor(\uV)$ it is not clear how to obtain any functoriality beyond the obvious one for monoidal functors that preserve simplicial colimits.
\end{remark}

\section{Delooping}\label{sec:deloop}

Let $\uV$ be a monoidal \icat{} compatible with simplicial colimits. In this section we first recall that for any associative algebra $A$ in $\uV$ we can extract from $\Mor(\uV)$ a monoidal \icat{} $\Mor(\uV)_{A}^{\otimes}$, which is a monoidal structure on $A$-$A$-bimodules given by the relative tensor product $\otimes_{A}$. We then prove that when $A$ is the unit $\bbone$, we (unsurprisingly) have a natural equivalence
\[ \Omega_{\bbone}\Mor(\uV) := \Mor(\uV)_{\bbone}^{\otimes} \simeq \uV^{\otimes}\]
of monoidal \icats{}. This shows that $\Mor(\uV)$, or its underlying $(\infty,2)$-category, is a ``delooping'' of $\uV$.


\begin{construction}
  For any \icat{} $\uC$, let $\Dop_{\uC} \to \Dop$ denote the
  cocartesian fibration for the functor $\Dop \to \CatI$ obtained by
  right Kan extending $\uC$ along the inclusion $\{[0]\} \hookrightarrow \Dop$.
  Then $\Dop_{\uC}$ is a double \icat{} (with
  $(\Dop_{\uC})_{[n]} \simeq \uC^{\times n+1}$), and any double \icat{} $\uM$ has a canonical map $\uM \to \Dop_{\uM_{0}}$. An object $X \in \uM_{0}$ induces a functor $\Dop \to \Dop_{\uM_{0}}$, and we define $\uM_{X}^{\otimes}$ as the pullback
  \[
    \begin{tikzcd}
      \uM_{X}^{\otimes} \arrow{d} \arrow{r} & \uM \arrow{d} \\
      \Dop \arrow{r} & \Dop_{\uM_{0}}.
    \end{tikzcd}
  \]
  Then $\uM_{X}^{\otimes}$ is a monoidal \icat{} (since this is a pullback square of cocartesian fibrations over $\Dop$). This gives the monoidal structure on horizontal endomorphism of $X$ in $\uM_{1}$ given by horizontal composition. (See \cite{spanalg}*{Remark 2.17} for a more detailed discussion of this construction.)
\end{construction}

\begin{construction}
  The identity section $\simp \to \Ar(\simp)$ induces for any \icat{} $\uc{C}$ over $\Dop$ a functor
  \[ \oMor'(\uc{C}) \to \uc{C}\] over $\Dop$. On fibres this takes a
  functor $A \colon \Dopn \to \uc{C}$ over $\Dop$ to its value at
  $\id_{[n]} \in \Dopn$.
\end{construction}

\begin{propn}
  If $\uc{V}$ is a monoidal \icat{} compatible with simplicial colimits, then the restriction of this functor to
  \[ \Mor(\uc{V}) \to \uc{V}^{\otimes} \]
  preserves inert morphisms.
\end{propn}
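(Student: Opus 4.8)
Write $p \colon \Mor(\uc{V}) \to \uc{V}^{\otimes}$ for the functor under consideration. Since both $\Mor(\uc{V})$ and $\uc{V}^{\otimes}$ are cocartesian fibrations over $\Dop$ and $p$ lies over $\Dop$, preserving inert morphisms means exactly that $p$ sends a cocartesian morphism lying over an inert map $\phi^{\op} \colon [m] \to [n]$ in $\Dop$ to another such. The plan is to make both the source cocartesian morphism and its image completely explicit, and then recognise the image as $p$ applied along an inert morphism of the \gnsiopd{} $\Dop_{/[m]}$, so that it is inert simply because algebras preserve inert morphisms. First I would recall that, viewing the objects of $\Mor(\uc{V})$ over $[m]$ as composite $\Dop_{/[m]}$-algebras $F \colon \Dop_{/[m]} \to \uc{V}^{\otimes}$, the cocartesian transport over $\phi^{\op}$ is given by restriction $\phi^{*}$ along the functor $\Dop_{/[n]} \to \Dop_{/[m]}$ induced by postcomposition with $\phi \colon [n] \to [m]$ (this is the cocartesian structure of $\oMor(\uc{V})$ inherited by $\Mor(\uc{V})$, cf.\ \cref{cor:Morcocart} and \cref{propn:omor'eq}). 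Since $p$ evaluates a $\Dop_{/[k]}$-algebra at $\id_{[k]}$, and the induced functor sends $\id_{[n]}$ to $\phi$, the cocartesian morphism $\gamma \colon F \to \phi^{*}F$ over $\phi^{\op}$ is sent by $p$ to a morphism
\[ p(\gamma) \colon F(\id_{[m]}) \longrightarrow (\phi^{*}F)(\id_{[n]}) \simeq F(\phi), \]
where on the right $\phi$ is regarded as an object of $\Dop_{/[m]}$.

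The key step is to identify this map. Unwinding the definition of $p$ as the functor induced by the identity section $\simp \to \Ar(\simp)$, I would check that $p(\gamma) \simeq F(\sigma)$, where $\sigma \colon \id_{[m]} \to \phi$ is the canonical morphism of $\Dop_{/[m]}$ corresponding to the factorisation of $\phi$ through $\id_{[m]}$ in $\simp_{/[m]}$. This is the main obstacle: it amounts to tracing $\gamma$ through the equivalence of \cref{propn:omor'eq} and the restriction along the identity section, and seeing that the resulting transition map is precisely the image under $F$ of the comparison between $\id_{[m]}$ and $\phi$ in the fibre $\Dop_{/[m]}$. Everything here is formal but fiddly; as a safeguard one can first reduce to the elementary inert maps $\rho \colon [1] \hookrightarrow [m]$ (a morphism of $\uc{V}^{\otimes}$ lying over an inert map is cocartesian as soon as all its pushforwards to the fibres over $[1]$ are), in which case $\sigma \colon \id_{[m]} \to \rho$ is an edge inclusion and the identification $p(\gamma) \simeq F(\sigma)$ can be read off directly on fibres.

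Granting this identification, the conclusion is immediate. The morphism $\sigma \colon \id_{[m]} \to \phi$ lies over the inert map $\phi^{\op}$ in $\Dop$, and it is a cocartesian morphism of the double \icat{} $\Dop_{/[m]} \to \Dop$ --- indeed this functor has discrete fibres $\Hom_{\simp}([a],[m])$, so it is a left fibration and \emph{every} morphism is cocartesian. Hence $\sigma$ is an inert morphism of the \gnsiopd{} $\Dop_{/[m]}$. Since any object $F$ of $\Mor(\uc{V})$ is in particular a $\Dop_{/[m]}$-algebra, it preserves inert morphisms, so $p(\gamma) \simeq F(\sigma)$ is inert, i.e.\ cocartesian over $\phi^{\op}$, in $\uc{V}^{\otimes}$. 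As this holds for every inert $\phi$ and every $F$, the functor $p$ preserves inert morphisms.
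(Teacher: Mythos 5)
Your proof is correct and takes essentially the same route as the paper's: identify the image under evaluation-at-the-identity of the cocartesian edge over an inert $\phi^{\op}$ as the map $F(\id_{[m]}) \to F(\phi)$, i.e.\ $F$ applied to the morphism $\id_{[m]} \to \phi$ in $\Dop_{/[m]}$, which is inert (cocartesian over $\phi^{\op}$ since $\Dop_{/[m]} \to \Dop$ is a left fibration) and hence sent to an inert morphism because $F$ is an algebra. The paper states this identification more tersely, but the content is identical.
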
 
\begin{proof}
 Given an inert morphism $\phi \colon [n] \to [m]$ in $\simp$ and an algebra $A \colon \Dopn \to \uV^{\otimes}$, this amounts to the induced map
  \[ A(\id_{[m]}) \to (\phi^{*}A)(\id_{[n]}) \simeq A(\phi)\]
  being cocartesian in $\uV^{\otimes}$, which is true since $A$ by assumption preserves inert maps.
\end{proof}

\begin{observation}
  This functor does not preserve cocartesian morphisms in general: Given a composite $\Dop_{/[2]}$-algebra $A$, the image of the cocartesian morphism over $d_{1} \colon [1] \to [2]$ is the canonical map
  \[ A(01) \otimes A(12) \to A(01) \otimes_{A(11)} A(12),\]
  which is typically not an equivalence.
\end{observation}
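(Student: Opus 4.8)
The plan is to trace a single cocartesian morphism of $\Mor(\uV)$ through the functor $G \colon \Mor(\uV) \to \uV^{\otimes}$ and verify directly that its image fails to be cocartesian. Fix a composite $\Dop_{/[2]}$-algebra $A$, viewed as an object of $\Mor(\uV)$ over $[2]$, and let $e \colon A \to B$ be the cocartesian morphism over $d_{1} \colon [1] \to [2]$. First I would identify the two endpoints of $G(e)$. On the source, $G(A) = A(\id_{[2]})$, which by inert-preservation of $A$ is the pair $(A(01), A(12)) \in \uV^{\times 2} \simeq (\uV^{\otimes})_{[2]}$. For the target, $\Mor(\uV)$ inherits its cocartesian morphisms from $\oMor(\uV)$ (\cref{cor:Morcocart}), whose cocartesian transport --- coming from the fibration structure of \cref{propn:omor'eq} --- is given along $d_{1}$ by restriction $d_{1}^{*}$; since restriction preserves composite algebras (\cref{cor:compprescomposite}), $B = d_{1}^{*}A$ and hence $G(B) = B(\id_{[1]}) = A(d_{1})$, which by compositeness of $A$ is the relative tensor product $A(01) \otimes_{A(11)} A(12) \in \uV \simeq (\uV^{\otimes})_{[1]}$.

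The heart of the argument is to identify the morphism $G(e)$ itself. Since $G$ is induced by the identity section $\simp \to \Ar(\simp)$ and $B = d_{1}^{*}A$, unwinding this section should show that $G(e)$ is the image under $A$ of the structural morphism $\id_{[2]} \to d_{1}$ of $\Dop_{/[2]}$, which lies over the active map $d_{1}$. Being a morphism of $\uV^{\otimes}$ over an active map, it factors through the cocartesian lift of $(A(01), A(12))$ over $d_{1}$ --- namely $A(01) \otimes A(12)$ --- so $G(e)$ is encoded by the resulting map in the fibre $\uV$,
\[ A(01) \otimes A(12) \longrightarrow A(01) \otimes_{A(11)} A(12), \]
which is exactly the canonical comparison map from the zero-simplices of the bar construction to its geometric realization, i.e.\ the bilinear composition that is part of the structure of any $\Dop_{/[2]}$-algebra. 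I expect this identification --- matching $G(e)$ with $A$ applied to $\id_{[2]} \to d_{1}$, and then factoring through the cocartesian lift in $\uV^{\otimes}$ --- to be the main obstacle, since it requires carefully following the identity section through the restriction that implements the cocartesian transport.

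To conclude, $G(e)$ is cocartesian over $d_{1}$ exactly when the displayed comparison map is an equivalence, and this fails in general: it is the canonical map into a relative tensor product, which is typically not invertible. For instance, taking all three algebras equal to a fixed nontrivial associative algebra $C$ and both bimodules equal to $C$, the comparison map becomes the multiplication $C \otimes C \to C \simeq C \otimes_{C} C$, an equivalence only in degenerate cases. Hence $G$ does not preserve cocartesian morphisms.
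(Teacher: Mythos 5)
Your proposal is correct and matches the paper's reasoning: the paper states this observation without further justification, and your unwinding — that the image of the cocartesian edge is $A$ applied to the structural morphism $\id_{[2]} \to d_{1}$ in $\Dop_{/[2]}$, whose fibre component over the active map $[2]\to[1]$ is the structure map $A(01)\otimes A(12)\to A(02)\simeq A(01)\otimes_{A(11)}A(12)$ — is exactly the implicit argument. The closing example (all entries a fixed algebra $C$, giving the multiplication $C\otimes C\to C$) correctly witnesses that this is typically not an equivalence.
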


\begin{cor}
  Let $\uV$ be a monoidal \icat{} compatible with simplicial colimits. For any $A \in \Alg(\uV)$, there is a lax monoidal functor
  \[ \Mor(\uV)_{A}^{\otimes} \to \uV^{\otimes}\]
  given by the canonical maps $M \otimes N \to M \otimes_{A} N$ for $A$-$A$-bimodules $M,N$.
\end{cor}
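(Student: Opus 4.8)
The plan is to construct the desired functor as the composite
\[ \Mor(\uV)_{A}^{\otimes} \xto{\pr} \Mor(\uV) \to \uV^{\otimes},\]
where the first map is the projection from the pullback square defining $\Mor(\uV)_{A}^{\otimes}$ and the second is the functor $\Mor(\uV) \to \uV^{\otimes}$ considered above. Both maps lie over $\Dop$, so the composite $F$ is a functor over $\Dop$ between monoidal \icats{}. To show that $F$ is lax monoidal I must check that it carries cocartesian morphisms over inert maps in $\Dop$ to cocartesian morphisms; this is precisely the condition of preserving inert cocartesian morphisms.

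First I would argue that the projection $\pr \colon \Mor(\uV)_{A}^{\otimes} \to \Mor(\uV)$ preserves \emph{all} cocartesian morphisms, not just the inert ones. By definition $\Mor(\uV)_{A}^{\otimes}$ is the pullback of the canonical map $\Mor(\uV) \to \Dop_{\Alg(\uV)}$ along the functor $A \colon \Dop \to \Dop_{\Alg(\uV)}$ determined by the algebra $A \in \Alg(\uV) = \Mor(\uV)_{0}$. Both of these are maps of cocartesian fibrations over $\Dop$ that preserve cocartesian morphisms: the canonical map because it is classified by the natural transformation recording the underlying objects $\Mor(\uV)_{[n]} \to \Alg(\uV)^{n+1}$, and the functor $A$ because it is the cocartesian section at the constant tuple $(A, \ldots, A)$, which is preserved by cocartesian transport in $\Dop_{\Alg(\uV)}$. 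Hence, for an object $([n], M)$ of the pullback and a map $\phi \colon [n] \to [m]$, the cocartesian pushforward $\phi_{!}M$ computed in $\Mor(\uV)$ again lies over the constant tuple, so it computes the cocartesian pushforward in $\Mor(\uV)_{A}^{\otimes}$; thus $\pr$ preserves cocartesian morphisms.

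Granting this, lax monoidality of $F$ is immediate: $\pr$ sends cocartesian morphisms over inert maps to cocartesian morphisms over inert maps, and the proposition above shows that $\Mor(\uV) \to \uV^{\otimes}$ preserves inert cocartesian morphisms, so their composite $F$ does as well. To identify $F$ with the functor given by the canonical maps $M \otimes N \to M \otimes_{A} N$, I would specialize the observation above: over the active map $d_{1} \colon [1] \to [2]$ the value of $F$ on the corresponding cocartesian morphism is the comparison $A(01) \otimes A(12) \to A(01) \otimes_{A(11)} A(12)$, and on $\Mor(\uV)_{A}^{\otimes}$ every algebra $A(i,i)$ equals $A$, so this becomes $M \otimes N \to M \otimes_{A} N$ for the $A$-$A$-bimodules $M = A(01)$ and $N = A(12)$.

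The main obstacle I expect is the verification in the second paragraph that $\pr$ preserves cocartesian morphisms; everything else is a formal consequence of the proposition and observation preceding the statement. The key subtlety there is that the pullback defining $\Mor(\uV)_{A}^{\otimes}$ is taken along a \emph{cocartesian} section, which is exactly what guarantees that cocartesian transport in the pullback agrees with that in $\Mor(\uV)$.
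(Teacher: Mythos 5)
Your proposal is correct and matches the paper's (implicit) argument: the corollary is stated without proof precisely because it follows by composing the pullback projection $\Mor(\uV)_{A}^{\otimes} \to \Mor(\uV)$ (which preserves cocartesian morphisms since the defining square is a pullback of cocartesian fibrations over $\Dop$) with the inert-preserving functor $\Mor(\uV) \to \uV^{\otimes}$ from the preceding proposition, and identifying the structure maps via the preceding observation. Your verification that the section determined by $A$ is cocartesian is the right justification for the step the paper leaves unstated.
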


\begin{propn}\label{propn:deloop}
  Let $\bbone$ be the unit of $\uV$, equipped with its unique associative algebra structure. Then the lax monoidal functor
  \[  \Mor(\uc{V})^{\otimes}_{\bbone} \to \uc{V}^{\otimes}\]
  is an equivalence of monoidal \icats{}.
\end{propn}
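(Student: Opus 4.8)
The plan is to exploit that the functor is already known to be lax monoidal, i.e.\ a map of cocartesian fibrations over $\Dop$ that preserves inert cocartesian morphisms, and to promote this to an equivalence of monoidal \icats{} by checking exactly two things: that it is strong monoidal (its lax structure maps are equivalences) and that it restricts to an equivalence on the fibres over $[1] \in \Dop$. A lax monoidal functor satisfying both is automatically an equivalence of monoidal \icats{}: the Segal maps identify the fibre over each $[n]$ with a product of copies of the fibre over $[1]$, so the underlying equivalence propagates to every fibre, while equivalences for the structure maps make the functor a map of cocartesian fibrations, and a fibrewise equivalence of cocartesian fibrations over $\Dop$ is an equivalence.

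For the underlying equivalence, I would trace through the construction of $\Mor(\uc{V})^{\otimes}_{\bbone}$ to identify its fibre over $[1]$ with the \icat{} of $\bbone$-$\bbone$-bimodules in $\uV$ --- namely the fibre of $\Bimod(\uV) \to \Alg(\uV) \times \Alg(\uV)$ over the pair $(\bbone, \bbone)$ --- under which the functor to $\uV^{\otimes}_{[1]} \simeq \uV$ becomes the forgetful functor sending a bimodule to its underlying object. This forgetful functor is monadic, with left adjoint the free $\bbone$-$\bbone$-bimodule functor $V \mapsto \bbone \otimes V \otimes \bbone$ (compare the free-algebra formula of \cref{cor:freealgLopn}); since $\bbone$ is the monoidal unit this is naturally equivalent to the identity, so the associated monad is the identity monad and the forgetful functor is an equivalence onto $\uV$. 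This is the familiar fact that modules over the unit algebra are trivial.

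For the structure maps, a strong monoidal functor is detected by its nullary and binary maps; the nullary map is the equivalence $\bbone \to \bbone$ of units, so it suffices to show that for $\bbone$-$\bbone$-bimodules $M, N$ the canonical comparison map $M \otimes N \to M \otimes_{\bbone} N$ is an equivalence. Here $M \otimes_{\bbone} N$ is the geometric realization of the bar construction $[k] \mapsto M \otimes \bbone^{\otimes k} \otimes N$, and because $\bbone$ is the unit every face and degeneracy of this simplicial object is an equivalence: the inner faces multiply copies of $\bbone$, the outer faces are the unit actions on $M$ and $N$, and the degeneracies insert the unit. A simplicial object all of whose structure maps are equivalences is essentially constant, so its colimit is its value $M \otimes N$, and the comparison map --- the leg of the colimit cocone at $[0]$ --- is an equivalence.

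The main obstacle is not any of these two computations, which are elementary once set up, but the bookkeeping that links them to the statement. Concretely, the work is in correctly identifying, from the pullback defining $\Mor(\uc{V})^{\otimes}_{\bbone}$ and the identity-section functor, that the functor really is the forgetful functor on the fibre over $[1]$ and the comparison map $M \otimes N \to M \otimes_{\bbone} N$ on the active morphisms, and then in assembling ``strong monoidal plus underlying equivalence'' into a genuine inverse monoidal functor rather than a merely fibrewise statement. Once the fibre over $[1]$ and the binary structure map are pinned down, the two inputs $\bbone \otimes V \otimes \bbone \simeq V$ and the collapse of the bar construction finish the argument.
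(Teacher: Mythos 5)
Your proposal follows essentially the same route as the paper: first show the lax structure maps are equivalences because the bar construction $[k]\mapsto M\otimes \bbone^{\otimes k}\otimes N$ is essentially constant (the paper phrases this via weak contractibility of $\Dop$), then reduce via the Segal condition to the underlying functor on fibres over $[1]$ being an equivalence. The only difference is that where you prove the equivalence $\Bimod_{\bbone,\bbone}(\uc{V})\to\uc{V}$ by monadicity and the collapse of the free-bimodule monad $V\mapsto\bbone\otimes V\otimes\bbone$, the paper simply cites \cite{nmorita}*{Corollary 4.50}; your argument is a correct way to fill in that citation.
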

\begin{proof}
  We first observe that the functor is monoidal: if $M,N$ are $A$-$A$-bimodules in $\uV$ for any associative algebra $A$, then the relative tensor product $M \otimes_{A} N$ is the colimit of a simplicial diagram in $\uV$ (the ``bar construction''), given at $[n]$ by the iterated tensor product $M \otimes A^{\otimes n} \otimes N$. If $A$ is $\bbone$ then this diagram is constant, so since $\Dop$ is weakly contractible the canonical map
from $M \otimes N$ to the colimit $M \otimes_{\bbone} N$ is an equivalence. To see that this is an equivalence of monoidal \icats{} it then suffices to show that its underlying functor $\Bimod_{\bbone,\bbone}(\uV) := (\Mor(\uc{V})^{\otimes}_{\bbone})_{[1]} \to \uV$ is an equivalence, which is \cite{nmorita}*{Corollary 4.50}.
\end{proof}

\section{Iteration}\label{sec:iterate}

In this section we check that the construction of the double \icat{}
$\Mor(\uV)$ can be iterated ``fibrewise'', giving for an
$E_{n}$-monoidal \icat{} $\uV$ compatible with simplicial colimits an
$(n+1)$-fold \icat{} $\Mor^{n}(\uV)$. We then show by iterating the
``delooping'' equivalence from the previous section that the
$E_{k}$-monoidal structure on $\Mor^{n-k}(\uV)$ obtained by naturality
in $\uV$ is equivalent to that obtained by taking the $k$-fold
endomorphisms $\Omega^{k}_{\bbone}\Mor^{n}(\uV)$ of $\bbone$ in
$\Mor^{n}(\uV)$.

To iterate, we need to know that if $\uV$ is an $E_{2}$-monoidal
\icat{} compatible with simplicial colimits, then $\Mor(\uV)$ is
fibrewise given by monoidal \icats{} compatible with simplicial
colimits. We start by observing that for this purpose it suffices to consider the cartesian product of \icats{} with simplicial colimits:

\begin{notation}
  Let $\LCatIsc$ denote the subcategory of $\LCatI$ whose objects are (large) \icats{} with simplicial colimits and whose objects are the functors that preserve these.
\end{notation}

\begin{observation}[\cite{HA}*{Remark 4.8.1.5}]\label{obs:simpcolimprod}
  Since $\simp^{\op}$ is a sifted \icat{} \cite{HTT}*{Lemma 5.5.8.4},
  a functor $\uC_{1} \times \cdots \times \uC_{n} \to \uD$ preserves
  simplicial colimits in each variable \IFF{} it preserves simplicial
  colimits in the cartesian product --- the non-trivial direction is
  \cite{HTT}*{Proposition 5.5.8.6} (and the converse holds since
  $\Dop$ is weakly contractible).

  Moreover, the subcategory $\LCatIsc$
  of $\LCatI$ inherits cartesian products (and indeed all small limits) from those in $\LCatI$ since a
  functor $\uD \to \uC_{1} \times \cdots \times \uC_{n}$ preserves
  simplicial colimits \IFF{} each of the composites $\uD \to \uC_{i}$
  preserves them.  It follows that an $\uc{O}$-algebra or
  $\uc{O}$-monoid in $\LCatIsct$ can be identified with an
  $\uc{O}$-monoidal \icat{} compatible with simplicial colimits.
\end{observation}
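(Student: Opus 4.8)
The plan is to establish the three assertions in turn, using the two cited properties of $\simp^{\op}$. For the first assertion I would treat the two directions separately. The nontrivial implication --- that preserving simplicial colimits separately in each variable forces preservation of simplicial colimits of diagrams in the cartesian product --- is exactly the content of \cite{HTT}*{Proposition 5.5.8.6}, applied to the sifted \icat{} $\simp^{\op}$. For the converse I would argue directly: a simplicial diagram in $\uc{C}_{i}$, with the remaining coordinates held at fixed objects, defines a simplicial diagram in the product that is constant in those other coordinates; since colimits in a product \icat{} are computed coordinatewise and the colimit over the weakly contractible $\simp^{\op}$ of a constant diagram is its value, the colimit in the product recovers the colimit in $\uc{C}_{i}$ in the $i$th slot and the fixed objects elsewhere. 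Hence a functor preserving simplicial colimits in the product preserves them in each variable.

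For the second assertion I would verify that the cartesian product of objects of $\LCatIsc$ is again their product inside $\LCatIsc$. Since colimits in a product \icat{} are coordinatewise, the product has simplicial colimits and each projection preserves them, so both the product and its projections lie in $\LCatIsc$; and a functor $\uD \to \uc{C}_{1} \times \cdots \times \uc{C}_{n}$ preserves simplicial colimits \IFF{} each composite $\uD \to \uc{C}_{i}$ does, which is exactly the universal property of the product restricted to $\LCatIsc$. The parenthetical claim about arbitrary small limits follows from the same coordinatewise reasoning together with the standard fact that \icats{} admitting simplicial colimits, and the functors preserving them, are closed under small limits in $\CatI$, with both the limit and its simplicial colimits computed through the projections.

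Finally I would combine the two previous points to obtain the monoid identification. By the second assertion the inclusion $\LCatIsc \hookrightarrow \LCatI$ preserves finite products, so an $\uc{O}$-monoid (equivalently, $\uc{O}$-algebra) in $\LCatIsc$ is the same datum as an $\uc{O}$-monoid in $\LCatI$ whose underlying \icat{} and whose structure functors all lie in $\LCatIsc$. Under the Segal equivalences presenting such a monoid through powers of its underlying \icat{} $\uV$, the structure functors correspond to the multiplication functors $\uV^{\times k} \to \uV$, and membership in $\LCatIsc$ says precisely that these preserve simplicial colimits in the cartesian product. By the first assertion this is equivalent to preservation in each variable, which is the defining condition for $\uV^{\otimes}$ to be an $\uc{O}$-monoidal \icat{} compatible with simplicial colimits.

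The step I expect to require the most care is this last translation: matching the structure maps of an $\uc{O}$-monoid in the cartesian structure with the multiplication functors of the associated $\uc{O}$-monoidal \icat{}, so that ``being a morphism of $\LCatIsc$'' becomes ``preserving simplicial colimits in the product'' and hence, via the first assertion, the usual in-each-variable compatibility. The remaining inputs are either the cited results or routine coordinatewise computations.
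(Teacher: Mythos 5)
Your proposal is correct and follows essentially the same route as the paper's inline justification: \cite{HTT}*{Proposition 5.5.8.6} for the non-trivial direction, weak contractibility of $\Dop$ plus coordinatewise colimits for the converse and for the closure of $\LCatIsc$ under products, and the resulting identification of $\uc{O}$-monoids. The extra detail you supply (constant diagrams in the fixed coordinates, matching Segal structure maps with multiplication functors) is exactly the intended unpacking.
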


Our construction of $\Mor$ in \cref{sec:double}
above is clearly functorial in monoidal functors that preserve
simplicial colimits, and so gives a functor
\[ \Mor \colon \Mon_{\Dop}(\LCatIsc) \to \LCat_{\infty/\Dop}^{\txt{cocart}} \simeq \Fun(\Dop, \LCatI),\]
(or $\Mon_{\Dop}(\LCatIsc) \to \Seg_{\Dop}(\LCatI)$ since the Segal condition holds),
where the source is the \icat{} of associative monoids in $\LCatIsc$, \ie{} monoidal \icat{} compatible with simplicial colimits.

\begin{propn}\label{propn:algsimpcolim}\
  Let $\uO$ and $\uP$ be \gnsiopds{}, and let $\uU, \uV, \uW$ be monoidal \icats{} compatible with simplicial colimits.
  \begin{enumerate}[(i)]
  \item The \icat{} $\Alg_{\uO}(\uV)$ has simplicial
    colimits, and the forgetful functor to $\Fun(\uO_{[1]}, \uV)$
    detects these.
  \item For any morphism $f \colon \uO \to \uP$, the functor
    \[ f^{*} \colon \Alg_{\uP}(\uV) \to \Alg_{\uO}(\uV)\]
    preserves simplicial colimits.
  \item For any monoidal functor $\phi \colon \uV \to \uW$ that preserves simplicial colimits, the functor
    \[ \phi_{*} \colon \Alg_{\uO}(\uV) \to \Alg_{\uO}(\uW)\]
    preserves simplicial colimits.
  \end{enumerate}
\end{propn}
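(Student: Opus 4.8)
Statement (i) is the essential one; given it, (ii) and (iii) follow formally. The plan for (i) is to realize an $\uO$-algebra as a section of the cocartesian fibration $\uV^{\otimes} \to \Dop$ that sends inert morphisms to cocartesian morphisms, and to compute simplicial colimits of such sections fiberwise over $\Dop$. The first thing to record is that \emph{every} transition functor of $\uV^{\otimes} \to \Dop$ preserves simplicial colimits: over inert morphisms these functors are the projections $\uV^{\times n} \to \uV^{\times m}$, which preserve all colimits, while over active morphisms they are built from the tensor product, and here the hypothesis that $\uV$ is compatible with simplicial colimits together with \cref{obs:simpcolimprod} (that is, the siftedness of $\Dop$) guarantees that they preserve simplicial colimits. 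By the theory of relative colimits for cocartesian fibrations it then follows that the \icat{} of sections $\Fun_{/\Dop}(\uO, \uV^{\otimes})$ admits simplicial colimits computed pointwise, i.e.\ detected by evaluation at each object of $\uO$. Next I would check that the full subcategory $\Alg_{\uO}(\uV)$ of inert-preserving sections is closed under these colimits: if each term $A_{j}$ of a simplicial diagram sends an inert morphism $e$ lying over an inert $\iota$ to a cocartesian morphism, then $A_{j}(Y) \simeq \iota_{!}A_{j}(X)$, and since the inert transition functor $\iota_{!}$ commutes with the simplicial colimit the induced map $A(X) \to A(Y)$ on colimits is again cocartesian. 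Thus $\Alg_{\uO}(\uV)$ has simplicial colimits, computed fiberwise.

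To finish (i) I would reduce the fiberwise detection to detection by $U_{\uO}$ using the Segal condition: for $X \in \uO_{[n]}$ with inert projections $X \to X_{i}$ onto $\uO_{[1]}$, the equivalence $\uV^{\otimes}_{[n]} \simeq \uV^{\times n}$ identifies evaluation of an algebra at $X$ with the tuple of the values $U_{\uO}(-)(X_{i})$. Hence each fiberwise evaluation factors through $U_{\uO}$ followed by a colimit-preserving evaluation in $\Fun(\uO_{[1]}, \uV)$, so the joint detection by all fiberwise evaluations reduces to detection by $U_{\uO}$ alone.

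Statements (ii) and (iii) then follow formally. In each case there is a commuting square relating the two forgetful functors through a functor of the target functor categories: for $f^{*}$ it is restriction $f_{[1]}^{*} \colon \Fun(\uP_{[1]}, \uV) \to \Fun(\uO_{[1]}, \uV)$ along $f_{[1]}$, and for $\phi_{*}$ it is postcomposition with $\phi \colon \uV \to \uW$. Both preserve simplicial colimits, since colimits in functor categories are computed pointwise (using in the second case that $\phi$ itself preserves simplicial colimits). As the forgetful functors detect simplicial colimits by (i) and the bottom functors preserve them, the functors $f^{*}$ and $\phi_{*}$ preserve simplicial colimits as well.

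The main obstacle is the very first step: the claim that sections of $\uV^{\otimes} \to \Dop$ inherit pointwise simplicial colimits. This is exactly where the compatibility hypothesis is used in an essential way, via \cref{obs:simpcolimprod}, to force the active transition functors (the tensor products) to preserve simplicial colimits; without the siftedness of $\Dop$ this would fail. Everything else is bookkeeping with the fiberwise description of colimits and the inert/Segal conditions, and so I expect the write-up of (i) to be routine once the relative-colimit input is in place.
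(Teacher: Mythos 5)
Your proposal is correct and takes essentially the same route as the paper: the paper disposes of part (i) by citing \cite{enr}*{Corollary A.5.4}, whose underlying argument is precisely your fibrewise relative-colimit analysis of inert-preserving sections of $\uV^{\otimes} \to \Dop$ (using siftedness of $\Dop$ to get the active transition functors to preserve simplicial colimits), and then deduces (ii) and (iii) formally from the detection statement exactly as you do.
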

\begin{proof}
  Part (i) is \cite{enr}*{Corollary A.5.4}. The other parts are then immediate from the fact that simplicial colimits in $\Alg_{\uO}(\uV)$ are detected in $\Fun(\uO_{[1]}, \uV)$.
\end{proof}

\begin{cor}
  The functor $\Mor$ takes $\Mon(\LCatIsc)$ into
  $\Seg_{\Dop}(\LCatIsc) \subseteq \Fun(\Dop, \LCatIsc)$ and preserves
  products.
\end{cor}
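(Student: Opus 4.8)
The plan is to verify the two assertions separately: first, that each value $\Mor(\uV)$, regarded via the straightening equivalence as a functor $\Dop \to \LCatI$, in fact factors through $\LCatIsc$ and satisfies the Segal condition; and second, that $\Mor$ carries (finite) products in $\Mon(\LCatIsc)$ to products in $\Fun(\Dop, \LCatIsc)$, which are computed objectwise.

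For the first assertion, recall that the fibre of $\Mor(\uV)$ over $[n]$ is $\Algc_{\Dopn}(\uV) \simeq \Alg_{\Lopn}(\uV)$ by \cref{propn:compalgs}, and that the Segal condition already holds because $\Mor(\uV) \to \Dop$ is a double \icat{}. To see that the functor lands in $\Fun(\Dop, \LCatIsc)$ I would check that the fibres have simplicial colimits and that the cocartesian transport functors preserve them. The first point is exactly \cref{propn:algsimpcolim}(i). For the second, I would use the description of cocartesian transport over $\phi \colon [n] \to [m]$ from \cref{obs:cocartinmor} as the composite of the operadic left Kan extension $j_{\phi,!}$ followed by the restriction $\phi_{\bbL}^{*}$: the restriction preserves simplicial colimits by \cref{propn:algsimpcolim}(ii), while $j_{\phi,!}$, being a left adjoint to $j_{\phi}^{*}$, preserves all colimits that exist, in particular the simplicial ones supplied by \cref{propn:algsimpcolim}(i). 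Hence the composite preserves simplicial colimits, and $\Mor(\uV)$ lies in $\Seg_{\Dop}(\LCatIsc)$.

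For product preservation I would use that products in $\LCatIsc$, and hence objectwise in $\Fun(\Dop, \LCatIsc)$, are computed as in $\LCatI$ by \cref{obs:simpcolimprod}. Given $\uV, \uW$, the two projections out of $\uV \times \uW$ are monoidal functors preserving simplicial colimits, so by the functoriality of $\Mor$ they induce maps of cocartesian fibrations $\Mor(\uV\times\uW) \to \Mor(\uV)$ and $\Mor(\uV\times\uW) \to \Mor(\uW)$; these assemble into a comparison functor
\[ \Mor(\uV\times\uW) \to \Mor(\uV)\times_{\Dop}\Mor(\uW) \]
over $\Dop$ that preserves cocartesian morphisms. On the fibre over $[n]$ it is the canonical functor
\[ \Alg_{\Lopn}(\uV\times\uW) \to \Alg_{\Lopn}(\uV)\times\Alg_{\Lopn}(\uW), \]
which is an equivalence because a section over $\Dop$ into the fibre product $\uV^{\otimes}\times_{\Dop}\uW^{\otimes}$ preserving inert morphisms is precisely a pair of such sections into $\uV^{\otimes}$ and $\uW^{\otimes}$. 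Since a fibrewise equivalence between cocartesian fibrations that preserves cocartesian morphisms is an equivalence, the comparison is an equivalence of cocartesian fibrations, \ie{} of functors $\Dop \to \LCatIsc$. The empty product is handled identically using $\Alg_{\Lopn}(\ast) \simeq \ast$, so $\Mor$ preserves finite products.

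The only genuine subtlety, and the step I would be most careful about, is this last equivalence argument: one must know that the comparison map respects the cocartesian structure \emph{before} concluding from the fibrewise equivalence that it is an equivalence of functors $\Dop \to \LCatIsc$. This is exactly what the functoriality of $\Mor$ (as a construction valued in cocartesian fibrations) supplies, since each of the two projections induces a genuine map of cocartesian fibrations. Everything else reduces to the already-established descriptions of the fibres and the transport functors, together with \cref{propn:algsimpcolim} and \cref{obs:simpcolimprod}.
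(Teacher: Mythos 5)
Your proof is correct and follows the same overall strategy as the paper (check the fibres and the transport functors, then check products pointwise), but it differs in two places worth noting. For the transport functors, the paper works with the model $\Algc_{\Dopn}(\uV) \subseteq \Alg_{\Dopn}(\uV)$ and argues that this colocalization is closed under simplicial colimits, so that \cref{propn:algsimpcolim}(ii) applied to $\phi^{*}$ suffices; you instead work in the $\Lopn$-model and use the explicit description of cocartesian transport from \cref{obs:cocartinmor} as $\phi_{\bbL}^{*}j_{\phi,!}$, with $j_{\phi,!}$ preserving colimits as a left adjoint. Both are valid; yours avoids the colocalization observation at the cost of invoking the transport description, while the paper's is slightly more economical. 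For products, the paper simply cites that $\Alg_{\uO}(\blank)$ preserves products for any \gnsiopd{} $\uO$ together with the fact that products in $\Fun(\Dop,\LCatIsc)$ are computed pointwise (\cref{obs:simpcolimprod}); your fibrewise comparison functor argument is an unpacking of the same fact and is fine, including the point about detecting equivalences of cocartesian fibrations fibrewise. One small omission: for $\Mor$ to be a well-defined functor into $\Fun(\Dop,\LCatIsc)$ you also need that $\Mor(\phi)$ is fibrewise simplicial-colimit-preserving for every monoidal $\phi \colon \uV \to \uW$ in $\Mon(\LCatIsc)$, which the paper gets from \cref{propn:algsimpcolim}(iii); you use this implicitly for the projections in the product argument but should state it.
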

\begin{proof}
  Suppose $\uV$ is a monoidal \icat{} compatible with simplicial
  colimits. Then $\Mor(\uV) \colon \Dop \to \LCatI$ factors through
  $\LCatIsc$ by \cref{propn:algsimpcolim}(i) and (ii) and the observation that
  since $\Algc_{\Dopn}(\uV) \hookrightarrow \Alg_{\Dopn}(\uV)$ is a
  colocalization, this full subcategory is closed under simplicial
  colimits. Similarly, \cref{propn:algsimpcolim}(iii) implies that if $\phi \colon \uV \to \uW$ is a monoidal functor that preserves simplicial colimits, then $\Mor(\phi) \colon \Mor(\uV) \to \Mor(\uW)$ is a morphism in $\Fun(\Dop, \LCatIsc)$. To see that $\Mor$ also preserves products, we just note that $\Alg_{\uO}(\blank)$ preserves products as a functor to $\LCatI$ for any $\uO$, and products in $\Fun(\Dop,\LCatIsc)$ are computed pointwise in $\LCatI$ by \cref{obs:simpcolimprod}, 
\end{proof}

\begin{cor}\label{cor:Morsymmon}
  The functor $\Mor \colon \Mon(\LCatIsc) \to \Fun(\Dop, \LCatIsc)$ is symmetric monoidal with respect to the cartesian product on both sides, and so induces for any \gnsiopd{}\footnote{Or indeed any cartesian pattern.} $\uO$ a functor
  \[ \Mon_{\uO \times \Dop}(\LCatIsc) \simeq \Mon_{\uO}(\Mon_{\Dop}(\LCatIsc)) \to \Mon_{\uO}(\Fun(\Dop, \LCatIsc)) \simeq \Fun(\Dop, \Mon_{\uO}(\LCatIsc)).\]
  In particular, if $\uV$ is an  $\uO \times \Dop$-monoidal \icat{} compatible with simplicial colimits, then $\Mor(\uV)$ is a simplicial object in $\uc{O}$-monoidal \icats{} compatible with simplicial colimits. \qed
\end{cor}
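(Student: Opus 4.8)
The plan is to deduce the statement formally from the preceding corollary, which supplies the only non-formal ingredient: that $\Mor$ preserves products and carries monoidal \icats{} compatible with simplicial colimits into $\Seg_{\Dop}(\LCatIsc) \subseteq \Fun(\Dop, \LCatIsc)$. First I would record that both the source and target admit finite products. Indeed $\LCatIsc$ has all small products by \cref{obs:simpcolimprod}, and both $\Mon(\blank) = \Mon_{\Dop}(\blank)$ and $\Fun(\Dop, \blank)$ create products from $\LCatIsc$; hence $\Mon(\LCatIsc)$ and $\Fun(\Dop, \LCatIsc)$ carry cartesian symmetric monoidal structures. I would then invoke the universal property of cartesian symmetric monoidal structures: a functor between \icats{} with finite products extends to a (strong) symmetric monoidal functor for the cartesian structures precisely when it preserves finite products, and the extension is then essentially unique (\cite{HA}*{\S 2.4.1}). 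Since the previous corollary shows $\Mor$ preserves products, this promotes it to a symmetric monoidal functor for the cartesian structures on both sides.

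Next I would pass to $\uO$-monoids. A symmetric monoidal functor induces a functor on $\uO$-monoids for any \gnsiopd{} (indeed any cartesian pattern) $\uO$, by functoriality of $\Mon_{\uO}(\blank)$; applied to the symmetric monoidal $\Mor$ this yields $\Mon_{\uO}(\Mor) \colon \Mon_{\uO}(\Mon(\LCatIsc)) \to \Mon_{\uO}(\Fun(\Dop, \LCatIsc))$. It then remains to identify the two ends with the expressions in the statement. For the source I would use the Fubini-type equivalence $\Mon_{\uO \times \Dop}(\uC) \simeq \Mon_{\uO}(\Mon_{\Dop}(\uC))$ for monoids over a product of patterns, together with $\Mon_{\Dop}(\blank) = \Mon(\blank)$. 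For the target I would use that $\Mon_{\uO}(\blank)$ commutes with cotensoring by an \icat{}: since $\Mon_{\uO}(\uC)$ is the full subcategory of $\Fun(\uO, \uC)$ cut out by conditions on the objects and morphisms of $\uO$ that are detected objectwise, and $\Fun(\Dop, \blank)$ commutes with $\Fun(\uO, \blank)$, one obtains $\Mon_{\uO}(\Fun(\Dop, \LCatIsc)) \simeq \Fun(\Dop, \Mon_{\uO}(\LCatIsc))$. Composing these identifications with $\Mon_{\uO}(\Mor)$ gives the asserted functor, and tracing it through on objects sends an $\uO \times \Dop$-monoidal $\uV$ compatible with simplicial colimits to the simplicial object $\Mor(\uV)$ valued in $\uO$-monoidal \icats{} compatible with simplicial colimits, as in the final sentence.

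I do not expect a genuine obstacle here: the substance of the corollary is already contained in the previous one (that $\Mor$ lands in $\LCatIsc$ and preserves products), which is why it is stated without a separate proof. The only points needing care are the two exchange equivalences above — the decomposition of monoids over a product of patterns and the commutation of $\Mon_{\uO}$ with the cotensor $\Fun(\Dop, \blank)$ — both of which are standard in the cartesian-pattern framework and would be cited to \cite{patterns2}. The one subtlety worth double-checking is that the cartesian symmetric monoidal structure used on $\Fun(\Dop, \LCatIsc)$ agrees, under the target identification, with the pointwise structure exhibiting $\Fun(\Dop, \Mon_{\uO}(\LCatIsc))$ as an \icat{} of simplicial objects; this is automatic since all the products in sight are computed pointwise in $\LCatI$ by \cref{obs:simpcolimprod}.
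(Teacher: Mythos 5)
Your proposal is correct and matches the paper's intent: the corollary is stated with no separate proof precisely because it follows formally from the preceding corollary via the universal property of cartesian symmetric monoidal structures, functoriality of $\Mon_{\uO}(\blank)$, the Fubini equivalence for monoids over a product of patterns, and the commutation of $\Mon_{\uO}(\blank)$ with $\Fun(\Dop,\blank)$ — exactly the chain you spell out. Your closing remark that all products in sight are computed pointwise, so the two symmetric monoidal structures on the target agree, is the right point to flag and is handled correctly.
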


We can in particular apply this to $\Dnop$-monoidal \icats{} (where $\Dnop := (\Dop)^{\times n}$), where it allows us allows us to iterate the double Morita \icat{}: we inductively define $\Mor^{n}(\uV)$ by applying $\Mor$ fibrewise to the $(n-1)$-simplicial object  $\Mor^{n-1}(\uV)$ in  monoidal \icats{}.

\begin{cor}
  Iterating the functor $\Mor$ gives a symmetric monoidal functor
  \[ \Mor^{n} \colon \Mon_{\simp^{n,\op}}(\LCatIsc) \to \Seg_{\simp^{n,\op}}(\LCatIsc)\]
  from $E_{n}$-monoidal \icats{} compatible with simplicial colimits to $n$-uple Segal objects in $\LCatIsc$. \qed
\end{cor}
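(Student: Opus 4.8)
The plan is to induct on $n$. The base case $n=1$ is exactly the two corollaries preceding this one: there $\Mor$ is shown to be symmetric monoidal for the cartesian products and to take values in $\Seg_{\Dop}(\LCatIsc)$, so that $\Mor^{1}=\Mor\colon \Mon_{\simp^{1,\op}}(\LCatIsc)\to\Seg_{\simp^{1,\op}}(\LCatIsc)$ is symmetric monoidal. For the inductive step I would exhibit $\Mor^{n}$ as a composite of two symmetric monoidal functors. Writing $\simp^{n,\op}\cong\simp^{n-1,\op}\times\Dop$ and applying \cref{cor:Morsymmon} with $\uO=\simp^{n-1,\op}$ produces a symmetric monoidal functor
\[ \Mon_{\simp^{n,\op}}(\LCatIsc)\simeq\Mon_{\simp^{n-1,\op}}(\Mon_{\Dop}(\LCatIsc))\to\Fun(\Dop,\Mon_{\simp^{n-1,\op}}(\LCatIsc)). \]
Since $\Mor$ takes values in $\Seg_{\Dop}(\LCatIsc)$ and the Segal condition is a limit condition detected on underlying objects --- limits in $\Mon_{\simp^{n-1,\op}}(\LCatIsc)$ being computed in $\LCatIsc$ by \cref{obs:simpcolimprod} --- this functor in fact lands in $\Seg_{\Dop}(\Mon_{\simp^{n-1,\op}}(\LCatIsc))$.

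Next, the inductive hypothesis gives a symmetric monoidal functor $\Mor^{n-1}\colon\Mon_{\simp^{n-1,\op}}(\LCatIsc)\to\Seg_{\simp^{n-1,\op}}(\LCatIsc)$, so postcomposition yields a symmetric monoidal functor $\Fun(\Dop,\Mor^{n-1})$ on functor \icats{}, which restricts over the source above. A Segal object valued in $(n-1)$-uple Segal objects is the same thing as an $n$-uple Segal object, since $\Seg_{\simp^{n-1,\op}}(\LCatIsc)$ is closed under limits inside $\Fun(\simp^{n-1,\op},\LCatIsc)$; that is, $\Seg_{\Dop}(\Seg_{\simp^{n-1,\op}}(\LCatIsc))\simeq\Seg_{\simp^{n,\op}}(\LCatIsc)$. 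On each object of $\Dop$ the value of $\Fun(\Dop,\Mor^{n-1})$ is an $(n-1)$-uple Segal object by the inductive hypothesis, so those $n-1$ ``inner'' Segal conditions hold automatically; the only point left to check is that the ``outer'' ($n$-th) Segal condition survives, and granting this the composite defines $\Mor^{n}$ and displays it as a composite of symmetric monoidal functors, hence symmetric monoidal.

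The main obstacle is precisely this outer Segal condition: for the functor of the first paragraph, which lands in $\Seg_{\Dop}$, to remain Segal after applying $\Mor^{n-1}$ pointwise, $\Mor^{n-1}$ must carry the pullbacks occurring in the Segal condition to pullbacks --- that is, it must preserve limits, whereas the corollaries above only record preservation of products. I would therefore strengthen the inductive hypothesis to assert that $\Mor^{n}$ preserves all small limits. Because limits in $\Fun(\Dop,-)$ and in $\Seg_{\simp^{k,\op}}(-)$ are computed pointwise, this reduces to the statement that $\Alg_{\uO}(-)\colon\Mon(\LCatIsc)\to\LCatIsc$ preserves all small limits for every \gnsiopd{} $\uO$, which follows by exactly the argument used for products: a limit of monoidal \icats{} is computed on the underlying cocartesian fibrations over $\Dop$, and an $\uO$-algebra valued in such a limit is the same as a compatible family of $\uO$-algebras, so that $\Alg_{\uO}(\lim_{i}\uV_{i})\simeq\lim_{i}\Alg_{\uO}(\uV_{i})$. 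With this upgrade the outer Segal condition is preserved by $\Mor^{n-1}$, completing the inductive step, and since a composite of limit-preserving functors again preserves limits, the strengthened induction closes as well.
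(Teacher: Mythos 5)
Your argument is correct and follows essentially the route the paper intends (the corollary is stated with a \qed{} and no written proof, as a formal consequence of the two preceding corollaries). The one structural difference is the order of iteration: you peel off the outermost $\Dop$ factor first and then apply $\Mor^{n-1}$ pointwise, whereas the paper applies $\Mor$ fibrewise to the $(n-1)$-uple Segal object $\Mor^{n-1}(\uV)$; by the symmetry of the construction in the simplicial coordinates these agree, and the issue you isolate arises identically in either order. That issue is your genuine contribution here: \cref{cor:Morsymmon} only records that $\Mor$ preserves \emph{products}, while propagating the Segal conditions in the already-processed coordinates through a pointwise application of $\Mor^{n-1}$ (or of $\Mor$, in the paper's order) requires preserving the iterated fibre products $X_{[1]}\times_{X_{[0]}}\cdots\times_{X_{[0]}}X_{[1]}$ as well, so the corollary as stated does not literally follow from what precedes it. Your repair --- strengthening the induction to preservation of all small limits and reducing this to $\Alg_{\uc{O}}(\lim_{i}\uV_{i})\simeq\lim_{i}\Alg_{\uc{O}}(\uV_{i})$ --- is sound: limits in $\Mon_{\Dop}(\LCatIsc)$ are computed in $\Fun(\Dop,\LCatI)$ by \cref{obs:simpcolimprod}, mapping out of a fixed \gnsiopd{} preserves them, and the inert-preservation and compositeness conditions are detected componentwise (for the latter, use $\Algc_{\Dopn}\simeq\Alg_{\Lopn}$ from \cref{propn:compalgs}, or \cref{cor:Segcond}). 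In short, your proof is a complete version of the argument the paper leaves implicit, and it correctly identifies and fills the one point that is not purely formal.
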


\begin{remark}
  We can also use the functoriality from \cref{cor:Morsymmon} to construct an $(n+m)$-fold \icat{} by applying $\Mor^{n}$ to an $E_{n}$-monoidal $m$-fold \icat{}, as studied in \cite{JohnsonFreydScheimbauerLax}.
\end{remark}

Our remaining goal is to check that the $n$-uple Segal \icat{} $\Mor^{n}(\uV)$ is a $k$-fold delooping of $\Mor^{n-k}(\uV)$.

\begin{remark}
  There are several ways to interpret $\Omega^{k}_{\bbone}\Mor^{n}(\uV)$, but since $\Mor^{n}(\uV)$ is symmetric in the $n$ simplicial coordinates these are all equivalent.
\end{remark}

\begin{propn}
  For $\uV$ an $E_{n}$-monoidal \icat{} compatible with simplicial coliimits, there is a natural equivalence
  \[ \Omega^{k}_{\bbone}\Mor^{n}(\uV) \simeq \Mor^{n-k}(\uV)\] of $E_{k}$-monoidal $(n-k)$-uple \icats{}. 
\end{propn}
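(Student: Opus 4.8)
Throughout write $E_{m} := \simp^{m,\op}$, so that an $E_{n}$-monoidal \icat{} compatible with simplicial colimits is an object of $\Mon_{E_{n}}(\LCatIsc)$ and $\Mor^{n}$ is obtained by applying $\Mor$ once in each of the $n$ simplicial coordinates. The plan is to deduce the statement by induction from a single \emph{parametrized} form of the delooping equivalence \cref{propn:deloop}: for every \gnsiopd{} $\uO$, the loop-at-the-unit functor undoes one application of $\Mor$ on $\Mon_{\uO}(\Mon(\LCatIsc))$.

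The first and main step is to upgrade \cref{propn:deloop} to an equivalence of \emph{product-preserving} functors. The assignment $\uV \mapsto (\Mor(\uV), \bbone)$ refines $\Mor$ to a functor into \emph{pointed} double \icats{} (double \icats{} equipped with a distinguished object of the \icat{} of objects), the basepoint being the unit algebra; this is natural in $\uV$ and compatible with products, since the unit of a product of monoidal \icats{} is the product of the units. The loop functor $\Omega_{\bbone}$ on pointed double \icats{} preserves finite products, being defined by the pullback in the construction preceding \cref{propn:deloop}, and pullbacks commute with products. Thus $\Mor$ and $\Omega_{\bbone}$ both preserve finite products, and \cref{propn:deloop} provides a natural equivalence $\Omega_{\bbone}\Mor \simeq \id$ of endofunctors of $\Mon(\LCatIsc)$. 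Since product-preserving functors are the same as symmetric monoidal functors for the cartesian structures, and any natural transformation between such is automatically monoidal, this is an equivalence of symmetric monoidal functors. Applying $\Mon_{\uO}(\blank)$ together with the identification $\Mon_{\uO}(\Mon(\LCatIsc)) \simeq \Mon_{\uO \times \Dop}(\LCatIsc)$ of \cref{cor:Morsymmon} (valid in $\LCatIsc$ by \cref{obs:simpcolimprod}) then transports the equivalence to the parametrized statement, for every $\uO$.

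I would then induct on $k$, the case $k=0$ being trivial. Suppose $\Omega^{j}_{\bbone}\Mor^{n}(\uV) \simeq \Mor^{n-j}(\uV)$ as an $E_{j}$-monoidal $(n-j)$-uple Segal object, the $E_{j}$-structure occupying $j$ of the coordinates and the other $n-j$ serving as Morita directions. By the symmetry of $\Mor^{n}$ in its coordinates we may take the next loop in the Morita coordinate produced by the outermost (last-applied) $\Mor$; then $\Mor^{n-j}(\uV)$ is the image under this $\Mor$ of an object whose corresponding coordinate carries a monoidal structure that $\Mor$ consumes. Applying the parametrized delooping of the previous paragraph with $\uO = E_{j} \times E_{n-j-1}$ --- the $j$ frozen coordinates together with the remaining $n-j-1$ Morita coordinates --- the functor $\Omega_{\bbone}$ in this coordinate undoes the $\Mor$ and frees the coordinate as a new $E_{1}$-monoidal structure. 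The resulting monoidal structures in $j+1$ distinct coordinates assemble, via the identification $\Mon_{\uO \times \Dop} \simeq \Mon_{\uO}\Mon_{\Dop}$ of \cref{cor:Morsymmon}, into an $E_{j+1}$-monoidal structure, giving
\[ \Omega^{j+1}_{\bbone}\Mor^{n}(\uV) \simeq \Mor^{n-j-1}(\uV) \]
as an $E_{j+1}$-monoidal $(n-j-1)$-uple Segal object, which completes the induction.

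The hard part is the first step: exhibiting \cref{propn:deloop} as an equivalence in the \icat{} of product-preserving endofunctors of $\Mon(\LCatIsc)$, rather than merely objectwise, since only this allows it to be parametrized by $\Mon_{\uO}(\blank)$. Once that naturality and product-preservation are in hand, the remaining steps are formal bookkeeping with the identification $\Mon_{\uO \times \Dop} \simeq \Mon_{\uO}\Mon_{\Dop}$ and the symmetry of the construction.
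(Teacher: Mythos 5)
Your proposal is correct and follows essentially the same route as the paper: induction on $k$, using the naturality of the delooping equivalence of \cref{propn:deloop} to apply it pointwise to $\Mor(\Mor^{n-1}(\uV))$, with the symmetry of $\Mor^{n}(\uV)$ in its simplicial coordinates handling the bookkeeping of which coordinate is looped. Your first paragraph simply makes explicit the product-preservation and monoidal-naturality facts that the paper's one-line appeal to ``naturality'' implicitly relies on.
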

\begin{proof}
  We prove this by induction on $k$, interpreting $\Omega^{k}_{\bbone}\Mor^{n}(\uV)$ as applying $\Omega_{\bbone}$ pointwise to $\Mor(\Mor^{n-1}(\uV))$. By \cref{propn:deloop}, this produces the $(n-1)$-fold simplicial object $\Mor^{n-1}(\uV)$ in monoidal \icats{}. We can now use the naturality of the equivalence \cref{propn:deloop} to conclude that $\Omega^{k}_{\bbone}\Mor^{n}(\uV)$ is equivalent to $\Omega^{k-1}_{\bbone}\Mor^{n-1}(\uV) \simeq \Mor^{n-k}(\uV)$ as $E_{k}$-monoidal \icats{}.
\end{proof}

\begin{bibdiv}
\begin{biblist}
\bib{BrochierJordanSnyder}{article}{
  eprint={arXiv:1804.07538},
  author={Brochier, Adrien},
  author={Jordan, David},
  author={Snyder, Noah},
  title={On dualizability of braided tensor categories},
  journal={Compos. Math.},
  volume={157},
  date={2021},
  number={3},
  pages={435--483},
}

\bib{patterns1}{article}{
  author={Chu, Hongyi},
  author={Haugseng, Rune},
  title={Homotopy-coherent algebra via {S}egal conditions},
  date={2021},
  eprint={arXiv:1907.03977},
  journal={Advances in Mathematics},
  volume={385},
  pages={107733},
}

\bib{patterns2}{article}{
  author={Chu, Hongyi},
  author={Haugseng, Rune},
  title={Free algebras through Day convolution},
  eprint={arXiv:2006.08269},
  journal={Algebr. Geom. Topol.},
  volume={22},
  date={2022},
  number={7},
  pages={3401--3458},
}

\bib{DouglasSchommerPriesSnyderDualTensorCat}{article}{
  eprint={arXiv:1312.7188},
  author={Douglas, Christopher L.},
  author={Schommer-Pries, Christopher},
  author={Snyder, Noah},
  title={Dualizable tensor categories},
  journal={Mem. Amer. Math. Soc.},
  volume={268},
  date={2020},
  number={1308},
}

\bib{enr}{article}{
  author={Gepner, David},
  author={Haugseng, Rune},
  title={Enriched $\infty $-categories via non-symmetric $\infty $-operads},
  journal={Adv. Math.},
  volume={279},
  pages={575--716},
  eprint={arXiv:1312.3178},
  date={2015},
}

\bib{freepres}{article}{
  author={Gepner, David},
  author={Haugseng, Rune},
  author={Nikolaus, Thomas},
  title={Lax colimits and free fibrations in $\infty $-categories},
  eprint={arXiv:1501.02161},
  journal={Doc. Math.},
  volume={22},
  date={2017},
  pages={1225--1266},
}

\bib{GwilliamScheimbauer}{article}{
  author={Gwilliam, Owen},
  author={Scheimbauer, Claudia},
  title={Duals and adjoints in higher {M}orita categories},
  date={2018},
  eprint={arXiv:1804.10924},
}

\bib{nmorita}{article}{
  author={Haugseng, Rune},
  title={The higher {M}orita category of $E_{n}$-algebras},
  date={2017},
  eprint={arXiv:1412.8459},
  journal={Geom. Topol.},
  volume={21},
  number={3},
  pages={1631--1730},
}

\bib{spanalg}{article}{
  author={Haugseng, Rune},
  title={Segal spaces, spans, and semicategories},
  eprint={arXiv:1901.08264},
  journal={Proc. Amer. Math. Soc.},
  volume={149},
  date={2021},
  number={3},
  pages={961--975},
}

\bib{JohnsonFreydScheimbauerLax}{article}{
  author={Johnson-Freyd, Theo},
  author={Scheimbauer, Claudia},
  title={(Op)lax natural transformations, twisted quantum field theories, and ``even higher'' Morita categories},
  journal={Adv. Math.},
  volume={307},
  date={2017},
  pages={147--223},
  eprint={arXiv:1502.06526},
}

\bib{LiBlandSpan}{article}{
  title={The stack of higher internal categories and stacks of iterated spans},
  author={Li-Bland, David},
  date={2015},
  eprint={arXiv:1506.08870},
}

\bib{HTT}{book}{
  author={Lurie, Jacob},
  title={Higher Topos Theory},
  series={Annals of Mathematics Studies},
  publisher={Princeton University Press},
  address={Princeton, NJ},
  date={2009},
  volume={170},
  note={Available from \url {http://math.ias.edu/~lurie/}},
}

\bib{HA}{book}{
  author={Lurie, Jacob},
  title={Higher Algebra},
  date={2017},
  note={Available at \url {http://math.ias.edu/~lurie/}.},
}

\bib{ScheimbauerThesis}{article}{
  author={Scheimbauer, Claudia},
  title={Factorization homology as a fully extended topological field theory},
  date={2014},
  note={Thesis (Ph.D.) --- Eidgenössische Technische Hochschule, Zürich},
  eprint={http://www.scheimbauer.at/ScheimbauerThesis.pdf},
}
\end{biblist}
\end{bibdiv}

\end{document}